\documentclass[pdflatex,sn-mathphys-num]{sn-jnl}
\usepackage [T1]{fontenc}
\usepackage [utf8]{inputenc}
\usepackage [english]{babel}
\usepackage {amsfonts}
\usepackage {amssymb}
\usepackage {mathbbol}

\usepackage {amsopn}
\usepackage {mathtools}
\usepackage {tikz-cd}
\usepackage{csquotes}
\usepackage{xfrac}
\usepackage{faktor}
\usepackage{dsfont}
\usepackage{bm}
\usepackage{amsthm,lipsum}
\makeatletter
\def\thm@space@setup{%
  \thm@preskip=8pt plus 2pt minus 4pt
  \thm@postskip=\thm@preskip 
}
\makeatother

\usepackage{smartdiagram}

\usepackage[shortcuts]{extdash}

\newcommand{\ols}[1]{\mskip.5\thinmuskip\overline{\mskip-.5\thinmuskip {#1} \mskip-.5\thinmuskip}\mskip.5\thinmuskip} 
\newcommand{\olsi}[1]{\,\overline{\!{#1}}} 
\makeatletter
\newcommand\closure[1]{
  \tctestifnum{\count@stringtoks{#1}>1} 
  {\ols{#1}} 
  {\olsi{#1}} 
}
\long\def\count@stringtoks#1{\tc@earg\count@toks{\string#1}}
\long\def\count@toks#1{\the\numexpr-1\count@@toks#1.\tc@endcnt}
\long\def\count@@toks#1#2\tc@endcnt{+1\tc@ifempty{#2}{\relax}{\count@@toks#2\tc@endcnt}}
\def\tc@ifempty#1{\tc@testxifx{\expandafter\relax\detokenize{#1}\relax}}
\long\def\tc@earg#1#2{\expandafter#1\expandafter{#2}}
\long\def\tctestifnum#1{\tctestifcon{\ifnum#1\relax}}
\long\def\tctestifcon#1{#1\expandafter\tc@exfirst\else\expandafter\tc@exsecond\fi}
\long\def\tc@testxifx{\tc@earg\tctestifx}
\long\def\tctestifx#1{\tctestifcon{\ifx#1}}
\long\def\tc@exfirst#1#2{#1}
\long\def\tc@exsecond#1#2{#2}
\makeatother

\usepackage{yhmath}

\newcommand\blfootnote[1]{%
  \begingroup
  \renewcommand\thefootnote{}\footnote{#1}%
  \addtocounter{footnote}{-1}%
  \endgroup
}

\newcommand{\catname}[1]{{\mathbf{#1}}}
\newcommand{\C}{{\mathbb{C}_\infty}}

\newcommand{\K}{{K_\infty}}

\newcommand{\FF}{{\closure{\mathbb{F}_q}}}
\newcommand{\F}{\mathbb{F}}

\newcommand{\R}{\mathbb{R}}
\newcommand{\Z}{\mathbb{Z}}
\renewcommand{\P}{\mathcal{P}}

\renewcommand{\O}{\mathcal{O}}
\newcommand{\U}{\mathcal{U}}

\newcommand{\G}{\mathbb{G}}

\DeclareMathOperator{\Frac}{Frac}

\DeclareMathOperator{\Hom}{Hom}
\DeclareMathOperator{\res}{res}
\DeclareMathOperator{\Lie}{Lie}
\DeclareMathOperator{\Span}{Span}
\DeclareMathOperator{\End}{End}

\DeclareMathOperator{\Mat}{Mat}

\DeclareMathOperator{\Sf}{Sf}
\newcommand{\Mod}{\catname{Mod}}

\newtheorem{teo}{Theorem}[section]
\newtheorem{Teo}{Theorem}
\newtheorem*{Teo*}{Theorem}
\newtheorem{lemma}[teo]{Lemma}

\newtheorem{prop}[teo]{Proposition}
\newtheorem{Prop}[Teo]{Proposition}
\newtheorem*{prop*}{Proposition}
\newtheorem{cor}[teo]{Corollary}
\newtheorem{conj}[teo]{Conjecture}
\theoremstyle{definition}
\newtheorem{Def}[teo]{Definition}
\newtheorem{Deff}[Teo]{Definition}

\newtheorem{oss}[teo]{Remark}

\newtheorem{ex}[teo]{Example}

\begin{document}
\title{A duality result about special functions for Drinfeld modules of arbitrary rank}
\author*[]{\fnm{Giacomo Hermes} \sur{Ferraro}}\email{giacomohermes.ferraro@gmail.com}

\affil*{\orgdiv{Interdisziplin\"ares Zentrum f\"ur wissenschaftliches Rechnen}, \orgname{Ruprecht-Karls-Universit\"at Heidelberg}, \orgaddress{\street{Im Neuenheimer
 Feld 205}, \city{Heidelberg}, \postcode{69120}, \state{Baden-W\"urttemberg}, \country{Germany}}}

\keywords{Drinfeld modules, Anderson modules, Pellarin $L$\=/series, shtuka functions, special functions}
\abstract{
In the setting of a Drinfeld module $\phi$ over a curve $X/\F_q$, we use a functorial point of view to define \emph{Anderson eigenvectors}, a generalization of the so called "special functions" introduced by Anglès, Ngo Dac and Tavares Ribeiro, and prove the existence of a universal object $\omega_\phi$. 

We adopt an analogous approach with the adjoint Drinfeld module $\phi^*$ to define \textit{dual Anderson eigenvectors}. The universal object of this functor, denoted by $\zeta_\phi$, is a generalization of Pellarin zeta functions, can be expressed as an Eisenstein-like series over the period lattice, and its coordinates are analytic functions from $X(\C)\setminus\{\infty\}$ to $\C$.

For all integers $i$ we define dot products $\zeta_\phi\cdot\omega_\phi^{(i)}$ as certain meromorphic differential forms over $X_\C\setminus\{\infty\}$, and prove they are actually rational. This amounts to a generalization of Pellarin's identity for the Carlitz module, and is linked to the pairing of the $A$\=/motive and the dual $A$\=/motive defined by Hartl and Juschka.

Finally, we develop an algorithm to compute the forms $\zeta_\phi\cdot\omega_\phi^{(i)}$ when $X=\mathbb{P}^1$, and prove a conjecture of Gazda and Maurischat about the invertibility of special functions for Drinfeld modules of rank $1$.
}
\maketitle
\blfootnote{This version of the manuscript has been published by \href{https://www.doi.org/10.1007/s40687-025-00506-w}{Res. Math. Sci.} under the licensing agreement \href{https://creativecommons.org/licenses/by/4.0/}{CC BY 4.0}.}

\section{Introduction}

Drinfeld modules are meant to provide an analogue of complex elliptic curves---interpreted as quotients of the complex plane by a lattice---in the context of function fields over a finite field $\F_q$. Instead of $\mathbb{Z}$, we work with the ring $A$ of functions over a projective, geometrically irreducible, smooth curve ${X}/{\F_q}$ outside a closed point $\infty$. The role of the real numbers is filled by the $\infty$\=/adic completion $\K$ of the function field of $X$, while the field of complex numbers is substituted by $\C$, defined as the $\infty$\=/adic completion of an algebraic closure of $\K$.

Inside $\C$, we can consider a discrete projective finitely generated $A$\=/module $\Lambda$, the \emph{period lattice}, which contrary to the theory of elliptic curves can have arbitrary rank $r$, and is never cocompact. A Drinfeld module $\phi$ is an $\F_q$\=/linear and polynomial action of $A$ on $\C$, and Drinfeld modules are in bijection with period lattices in the following way: for each Drinfeld module $\phi$ there is a unique period lattice $\Lambda_\phi$ such that $\C$, endowed with the $A$\=/module structure induced by $\phi$, is isomorphic to ${\C}/{\Lambda_\phi}$ with the $A$\=/module structure induced by the inclusion $A\subseteq\C$.

\subsubsection*{Entire functions.}

The Tate algebra $\C\closure\otimes A$ is defined as the completion of $A_\C\coloneqq\C\otimes A$ with respect to the sup norm induced by $\C$ (when not specified, tensor products are assumed to be over $\F_q$); given an $\F_q$\=/basis $\{a_i\}_i$ of $A$, all elements of $\C\closure\otimes A$ can be uniquely expressed as $\sum_i c_i\otimes a_i$ with $\lim_i c_i=0$.

The Tate algebra $\C\closure\otimes A$ can be thought of as the set of analytic functions from the "unit disc" $D\subseteq X(\C)$ to $\C$, where \[D\coloneqq\left\{P\in X(\C)\setminus\{\infty\}\text{ such that }\|a(P)\|\leq1\text{ for all }a\in A\right\}.\]

In the article \cite{CNDP21}, Chung, Ngo Dac, and Pellarin proved that, if $\infty\in X(\F_q)$, the \emph{Pellarin zeta function} \[\zeta_A\coloneqq-\sum_{a\in A\setminus\{0\}}a^{-1}\otimes a\in\C\closure\otimes A\] is an analytic function from $X(\C)\setminus\{\infty\}$ to $\C$.
We generalize this result as follows to a wider class of Eisenstein-like series.

\begin{Prop}[Prop. \ref{prop analyticity of zeta}]
    Let $\Lambda\subseteq\C$ be an arbitrary period lattice, and define \[\zeta_\Lambda\coloneqq-\sum_{\lambda\in\Lambda\setminus\{0\}}\lambda^{-1}\otimes\lambda\in\C\closure\otimes\Lambda.\] For any $A$\=/linear map $f:\Lambda\to A$, the element $(1\otimes f)\zeta_\Lambda\in\C\closure\otimes A$ is an analytic function from $X(\C)\setminus\{\infty\}$ to $\C$.
\end{Prop}

The most interesting aspect of this proposition is that it is proven in a completely different way from \cite{CNDP21}: it is a simple consequence of one of the main theorems of this paper, namely that $\zeta_\Lambda$ is an "eigenvector" for the adjoint Drinfeld module associated to $\Lambda$ (see Theorem \ref{thm 4}). This property of $\zeta_\Lambda$ is the motivating result of this paper, and is meant to mirror the property of Anderson--Thakur special functions.

\subsubsection*{Special functions.}

The simplest example of a Drinfeld module is the \textit{Carlitz module} $C$: we assume ${X=\mathbb{P}^1_{\F_q}}$, so that $A=\F_q[\theta]$ for some rational function $\theta$, and set $\phi_\theta\coloneqq\theta+\tau$, where $\tau:\C\to\C$ denotes the Frobenius endomorphism sending $c$ to $c^q$. In this case, the period lattice is $\tilde{\pi}A$ for some $\tilde{\pi}\in\mathbb{C}_\infty^\times$; as we said before, we can identify the Tate algebra $\C\closure\otimes A$ with the set of analytic functions from the unit disc of $\C$ to $\C$, i.e. with the set of formal series $\sum_i s_it^i\in\C[\![t]\!]$ such that $\lim_i s_i=0$.

Anderson and Thakur introduced in \cite{AT90} the function $\omega\in\C\closure\otimes A$ as the unique element such that, if we write $\omega=\sum_{i\geq0}c_it^i$, we have $c_0=1$ and
\[\sum_{i\geq0}\phi_\theta(c_i)t^i=\sum_{i\geq0}c_i t^{i+1}.\]
This series has various uses: for example, as shown in \cite{AP14} by Anglès and Pellarin, $\omega$ is connected to the explicit class field theory of $\F_q(\theta)$, and its $\FF$\=/rational values interpolate Gauss-Thakur sums. 

The module of "special functions" (as defined in \cite{ANDTR17a} by Anglès, Ngo Dac, and Tavares Ribeiro) generalizes the Anderson--Thakur function to any Drinfeld module $\phi$ as follows:
\[\Sf_\phi(A)\coloneqq\{\omega\in\C\closure\otimes A\mid(\phi_a\otimes1)(\omega)=(1\otimes a)\omega\text{ for all } a\in A\},\]
where $\phi_a\otimes 1$ sends an infinite series $\sum_i c_i\otimes a_i\in\C\closure\otimes A$ to $\sum_i\phi_a(c_i)\otimes a_i$.

In a recent article, Gazda and Maurischat showed (in the generality of an arbitrary anderson module $\underline{E}=(E,\phi)$) that the module of special functions is isomorphic to $\Hom_A(\Omega,\Lambda_\phi)$, where $\Omega$ is the module of K\"ahler differentials of $A$ (\cite[Thm. 3.11]{GM21}). 

In this paper we formulate a generalization of special functions which allows us to recover this result using the language of functors. We give the following Definition and Theorem, in the generality of Anderson modules (see Section \ref{section special functions} for definitions).
\begin{Deff}[Def. \ref{Def special functor}]
    For any Anderson $A$\=/module $(E,\phi)$, the functor of \textit{Anderson eigenvectors} \[\Sf_\phi:A\mbox{-}\Mod\to A\mbox{-}\Mod\] sends a discrete $A$\=/module $M$ to the $A$\=/module:
    \[\Sf_\phi(M)\coloneqq\{\omega\in E(\C)\closure\otimes M\mid(\phi_a\otimes1)(\omega)=(1\otimes a)\omega\text{ for all }a\in A\}.\]
\end{Deff}

\begin{Teo}[Thm. \ref{main result},\ref{non uniformizable case}]
    Let $(E,\phi)$ be an Anderson $A$\=/module. If either $(E,\phi)$ is uniformizable or we restrict the functor $\Sf_\phi$ to the category of torsionfree $A$\=/modules, $\Sf_\phi$ is represented by the $A$\=/module $\Hom_A(\Lambda_\phi,\Omega)$.
\end{Teo}
This representability result is reminiscent of the commutative diagram in \cite[Thm. 5.2]{GM21}, involving the module of special functions and the module of Gauss--Thakur sums for a given character $\chi:A\to\FF$.

While there is no canonical special function for arbitrary Anderson modules, there is a canonical Anderson eigenvector, namely the universal object of $\Sf_\phi$:
\[{\omega_\phi\in E(\C)\closure\otimes\Hom_A(\Lambda_\phi,\Omega)}.\] Moreover, given an $\F_q$\=/basis of $\Hom_A(\Lambda_\phi,\Omega)$, it's possible to write an explicit series expansion of $\omega_\phi$ in terms of the exponential map $\exp_E$ (see Remark \ref{oss universal special function}).
Using this expansion, we are able to answer positively a conjecture by Gazda and Maurischat from the article \cite{GM21} as follows.
\begin{Teo}[Thm. \ref{teo Gazda}]
    Assume that $\Sf_\phi(A)$ is free of rank $1$. Then, there is a special function in $\Sf_\phi(A)$ which is invertible as an element of $\C\closure\otimes A$.
\end{Teo}

\subsubsection*{Dual special functions.}

Given a Drinfeld $A$\=/module $(\G_a,\phi)$, it's possible to induce a natural $\F_q$\=/linear action $\phi^*$ of $A$ on $\C$, called the \emph{adjoint Drinfeld module} (see Section \ref{section zeta functions} for details). 

In the paper \cite{Ferraro}, the author proved that, assuming $\infty\in X(\F_q)$, the following holds for any Drinfeld module $\phi$ of rank $1$.

\begin{Teo*}[{\cite[Thm. 7.23]{Ferraro}}]
    Let $\tilde{\pi}I$ be the period lattice associated to $\phi$, where $I\subseteq A$ is an appropriate nonzero ideal of $A$, and define \[\zeta_I\coloneqq-\sum_{a\in I\setminus\{0\}}a^{-1}\otimes a\in\C\closure\otimes A.\] Then, the following identity holds in $\C\closure\otimes A$ for all $a\in A$:
    \[(\phi^*_a\otimes1)\left((\tilde{\pi}^{-1}\otimes1)\zeta_I\right)=(1\otimes a)(\tilde{\pi}^{-1}\otimes1)\zeta_I.\]
\end{Teo*}

This result prompts the definition of \emph{dual Anderson eigenvectors}, mirroring Definition 1.

\begin{Deff}[Def. \ref{Def dual Anderson eigenvectors}]
    For any Drinfeld $A$\=/module $(\G_a,\phi)$, the functor of \textit{dual Anderson eigenvectors} \[\Sf_{\phi^*}:A\mbox{-}\Mod\to A\mbox{-}\Mod\] sends a discrete $A$\=/module $M$ to the $A$\=/module:
    \[\Sf_{\phi^*}(M)\coloneqq\{\omega\in \C\closure\otimes M\mid(\phi^*_a\otimes1)(\omega)=(1\otimes a)\omega\text{ for all }a\in A\}.\]
\end{Deff}
We prove the following:
\begin{Teo}[Thm. \ref{zeta function functor}]\label{thm 4}
The functor $\Sf_{\phi^*}$ is represented by the $A$\=/module $\Lambda_\phi$, and
the universal object is:
\[\zeta_\phi\coloneqq-\sum_{\lambda\in\Lambda_\phi\setminus\{0\}}\lambda^{-1}\otimes\lambda\in\C\closure{\otimes}\Lambda_\phi.\]
\end{Teo}

\subsubsection*{A generalization of Pellarin's identity}

In \cite{Pellarin12}, Pellarin proved the following identity in $\C\closure\otimes A$ when $A=\F_q[\theta]$:
\begin{equation}\label{eq. Pellarin}
    \frac{\tilde{\pi}}{(\theta\otimes1-1\otimes\theta)\omega_C}=\zeta_A,
\end{equation}
where $\omega_C$ is the special function of Anderson--Thakur. and $\tilde{\pi}A\subseteq\C$ is the period lattice of the Carlitz module.

In a letter to the author in 2021, Pellarin conjectured that the product of $\zeta_A$ and a special function belongs to the fraction field of $A_\C$ for any $A$ and any Drinfeld module of rank $1$: Green and Papanikolas had already proven this statement when $X$ is an elliptic curve (\cite[Thm. 7.1]{GP18}). 

In the paper \cite{Ferraro}, the author proved this conjecture for $X$ of any genus assuming $\infty\in X(\F_q)$ ({\cite[Thm. 6.3]{Ferraro}}). In this paper, we generalize the result \cite[Thm. 6.3]{Ferraro} as follows, where we set $\omega^{(k)}\coloneqq (\tau^k\otimes1)(\omega_\phi)$ for any integer $k$ and we denote by $\Omega_\C$ the module of K\"ahler differentials of $A_\C$ as a $\C$\=/algebra, i.e. $\C\otimes_A\Omega$.

\begin{Teo}[Thm. \ref{Teo rationality}]
    Let $\phi$ be an arbitrary Drinfeld module, and denote by a dot product the natural $\C\closure\otimes A$\=/bilinear map
    \[\cdot:\C\closure\otimes\Lambda_\phi\times\C\closure\otimes\Hom_A(\Lambda_\phi,\Omega)\to\C\closure\otimes\Omega.\]
    
    For all integers $k$, $\zeta_\phi\cdot\omega^{(k)}$ is a rational differential form over the base-changed curve $X_{\C}$. Moreover, for all positive integers $k$, $\zeta_\phi\cdot\omega^{(k)}\in \Omega_\C$.
\end{Teo}

In this paper, working with a Drinfeld module $\phi$ of arbitrary rank $r$, we lose the knowledge of the divisors of the differential forms $\zeta_\phi\cdot\omega_\phi^{(k)}$ for a generic curve, which are instead explicitly described in \cite{Ferraro}. On the other hand, we are able to prove the following result about their generating series, where we identify $\C\closure\otimes\Omega$ with the set of continuous $\F_q$\=/linear homomorphisms from ${\K}/{A}$ to $\C$, as per Proposition \ref{complete tensor product} and \cite[Thm. 8]{Poonen96}.

\begin{Teo}[Thm. \ref{main teo}]
Let $\Phi,\hat{\Phi}:\K\to\C[\![\tau]\!][\tau^{-1}]$ be the unique ring homomorphisms which extend respectively $\phi,\phi^*:A\to\C[\![\tau]\!][\tau^{-1}]$ and such that their $k$\=/th coefficient is a continuous function from $\K$ to $\C$ for all $k\in\Z$. 

The following identity holds in the $\C[\tau,\tau^{-1}]$\=/module $\C[\![\tau,\tau^{-1}]\!]$ for all $c\in\K$:
    \[\sum_{k\in\mathbb\Z} \left(\zeta_\phi\cdot\omega_\phi^{(k)}\right)(c)\tau^k=\Phi_c^*-\hat{\Phi}_c.\]
\end{Teo}

\subsubsection*{Some explicit computations.}

In the case $A=\F_q[\theta]$, so that $\Omega=Ad\theta$, the previous theorem allows us to prove the following result.
\begin{Teo}[Thm. \ref{g=0 i=0}]
    Assume $A=\F_q[\theta]$ and let $\phi$ be a Drinfeld module of rank $r$. We have the following identities in $\C\closure\otimes\Omega$:
    \begin{align*}
        \zeta_\phi\cdot\omega_\phi&=\frac{d\theta}{\theta\otimes1-1\otimes\theta};\\
        \zeta_\phi\cdot\omega_\phi^{(k)}&=0\text{ if }1\leq k\leq r-1.
    \end{align*}
\end{Teo}
The previous identities, when $r=1$, imply the original identity (\ref{eq. Pellarin}) proved by Pellarin in \cite{Pellarin12} in the context of the Carlitz module.
Moreover, knowing the coefficients of $\phi_\theta$, we can compute recursively $\zeta_\phi\cdot\omega_\phi^{(k)}$ for all $k$ using the functional identity of $\omega_\phi$. 

Theorem \ref{main teo} also allows us to outline an algorithm to compute the differential forms for any given Drinfeld module on any given curve.
As an example, we apply this algorithm to the simple case of a normalized Drinfeld module $\phi$ of rank $1$ on a hyperelliptic curve $X$ of genus $g\geq1$, so that $\Omega=A\nu$ for a certain $\nu\in\Omega$, to recover an explicit formula generalizing the results originally found by Green and Papanikolas in \cite{GP18}.
\begin{Teo}[Thm. \ref{prop hyper}]
    Assume that $A={\F_q[x,y]}/{y^2-Q(x)y-P(x)}$, with $\deg(P)=2g+1$ and $\deg(Q)\leq g$.
    We have the following identities for the dot product $\zeta_\phi\cdot\omega_\phi$ and the shtuka function $f_\phi$:
    \begin{align*}
    \zeta_\phi\cdot\omega_\phi&=\left(\frac{y\otimes 1+1\otimes (y-Q(x))}{x\otimes1-1\otimes x}-\sum_{i=0}^{g-1}\left(\phi^*_{yx^{-i-1}}\right)_0\otimes x^i\right)(1\otimes \nu)\\
        f_\phi&=\frac{(x\otimes1-1\otimes x)\left(-\sum_{i=0}^g \left(\phi^*_{yx^{-i-1}}\right)_1\otimes x^i\right)}{y\otimes 1+1\otimes (y-Q(x))-(x\otimes1-1\otimes x)\left(\sum_{i=0}^{g-1}\left(\phi^*_{yx^{-i-1}}\right)_0\otimes x^i\right)}.
    \end{align*}
\end{Teo}

Given a Drinfeld module $\phi$, we denote by $M(\phi)$ its $A$\=/motive, and by $N(\phi)$ its dual $A$\=/motive (see Definition \ref{def (dual) motive}). Hartl and Juschka proved in \cite[Thm. 5.13]{HJ20}---in the wider generality of abelian and $A$\=/finite Anderson modules---that there is an isomorphism of left $A_\C[\tau^{-1}]$\=/modules between $N(\phi)$ and $\Hom_{A_\C}(\tau M(\phi),\Omega_\C)$. Using the explicit computations of Theorem \ref{g=0 i=0}, we prove the following:
\begin{Teo}[Prop. \ref{prop motive embedding}, Thm. \ref{teo restricted pairing}]
    Let $\phi$ be a Drinfeld module. There are a canonical embedding of left $A_\C[\tau]$\=/modules $\tau M(\phi)\subseteq\C\overline\otimes\Hom_A(\Lambda_\phi,\Omega)$ and a canonical embedding of left $A_\C[\tau^{-1}]$\=/modules $N(\phi)\subseteq\C\overline\otimes\Lambda_\phi$. 
    
    If we assume $A=\F_q[\theta]$, the restriction of the dot product to these submodules coincides with the perfect pairing induced by Hartl--Juschka's isomorphism.
\end{Teo}

\subsection*{Acknowledgements}
The author is thankful to Matt Papanikolas for the interesting and enlightening discussions relating to the contents of the present paper, and to Federico Pellarin for his guidance and suggestions.
The author thanks the Department of Mathematics Guido Castelnuovo, Università La Sapienza, where he has carried out his Ph.D. studies. This work has been produced as part of the Ph.D. thesis of the author.



\section{Pontryagin duality of \texorpdfstring{$A$}{A}-modules}\label{section Pontryagin duality}

\subsection{Basic statements about Pontryagin duality}

In this paper, compact and locally compact spaces are always assumed to be Hausdorff.

\begin{Def}[Pontryagin duality]
    Call $\mathbb{S}^1\subseteq\mathbb{C}^\times$ the complex unit circle. For any commutative ring with unity $B$, the \textit{Pontryagin duality} is a contravariant functor from the category of topological $B$\=/modules to itself, sending a module $M$ to the set of continuous group homomorphism $\hat{M}\coloneqq\Hom_\Z^\mathrm{cont}(M,\mathbb{S}^1)$, endowed with the compact open topology and with the natural $B$\=/module structure.
\end{Def}
The following well known result, which we do not prove, justifies the terminology "duality".
\begin{prop}\label{Pontryagin duality}
    For any ring $B$ and any topological $B$\=/module $M$, consider the group homomorphism $i_M:M\to\hat{\hat M}$ sending $m\in M$ to $(f\mapsto f(m))$. The map $i_M$ is a continuous $B$\=/linear homomorphism; if $M$ is locally compact, $\hat{M}$ is locally compact, and $i_M$ is an isomorphism. Moreover, if $M$ is compact (resp. discrete) $\hat{M}$ is discrete (resp. compact).
\end{prop}

Let now $X$ be a projective, geometrically irreducible, smooth curve over $\F_q$, with a closed point $\infty\in X$ of degree $e$, and let $A$ be the ring of rational functions over $X$ with no poles outside $\infty$.

\begin{oss}
    If $M$ is an $A$\=/module, since $M$ is also an $\F_q$\=/vector space, we have the following natural isomorphisms of topological $A$\=/modules:
    \[\hat{M}\coloneqq\Hom^\mathrm{cont}_\Z(M,\mathbb{S}^1)\cong\Hom^\mathrm{cont}_{\F_q}(M,\Hom_{\Z}(\F_q,\mathbb{S}^1))=\Hom^\mathrm{cont}_{\F_q}(M,\hat\F_q).\]
\end{oss}

Fix an isomorphism of the $p$-torsion points of $\mathbb{S}^1$ with $\F_p$, where $p$ denotes the characteristic of $\F_q$; we can identify the $\F_q$-vector spaces $\F_q$ and $\hat{\F}_q$ by sending $1$ to the trace map $\mathrm{tr}_{\F_q/\F_p}:\F_q\to\F_p$ so that, from now on, we can write $\hat{M}=\Hom_{\F_q}^\mathrm{cont}(M,\F_q)$ for any $\F_q$\=/vector space $M$. Let's fix some additional notation.

\begin{Def}\label{complete tensor product}
Let $M$ and $N$ be topological $\F_q$\=/vector spaces with $N$ locally compact. We define the topological tensor product of $M$ and $N$, and denote by $M\hat\otimes N$, the space $\Hom_{\F_q}^\mathrm{cont}(\hat{N},M)$ of continuous $\F_q$\=/linear homomorphisms from $\hat{N}$ to $M$.
\end{Def}

\begin{oss}
    The topological tensor product can be endowed with the compact open topology, but we will only need to use the definition of the underlying set.
\end{oss}

\begin{lemma}\label{duality tensor hom 1}
    For any pair of locally compact $A$\=/modules $M,N$, there is a natural isomorphism of $A\otimes A$\=/modules between $M\hat\otimes N$ and $N\hat\otimes M$.
\end{lemma}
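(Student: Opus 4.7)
The plan is to construct the isomorphism using Pontryagin duality as a functor, and check it is self-inverse up to the canonical identification $\hat{\hat N} \cong N$. Given $f \in M \hat\otimes N = \Hom_{\F_q}^{cont}(\hat N, M)$, I would apply the Pontryagin dual to obtain $\hat f \in \Hom_{\F_q}^{cont}(\hat M, \hat{\hat N})$. Since $N$ is locally compact, Proposition \ref{Pontryagin duality} provides a natural topological $A$-module isomorphism $\eta_N: N \xrightarrow{\sim} \hat{\hat N}$, so I can set
\[ \Phi_{M,N}(f) \,:=\, \eta_N^{-1}\circ \hat f \;\in\; \Hom_{\F_q}^{cont}(\hat M, N) \,=\, N\hat\otimes M. \]
The naturality in both $M$ and $N$ is immediate from the naturality of the Pontryagin dual functor and of $\eta$.

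Next I would define $\Phi_{N,M}$ symmetrically and check $\Phi_{N,M}\circ \Phi_{M,N}= \mathrm{id}$. Unwinding, for $f\in \Hom_{\F_q}^{cont}(\hat N, M)$ one finds
\[ \Phi_{N,M}(\Phi_{M,N}(f)) \,=\, \eta_M^{-1}\circ \widehat{\eta_N^{-1}\circ \hat f} \,=\, \eta_M^{-1}\circ \hat{\hat f}\circ \widehat{\eta_N^{-1}}, \]
and naturality of $\eta$ applied to $f\colon \hat N\to M$ gives $\hat{\hat f}\circ \eta_{\hat N}=\eta_M\circ f$, while the standard identity $\widehat{\eta_N^{-1}}=\eta_{\hat N}$ (Pontryagin duality for locally compact groups turns the double-dual unit on $N$ into the double-dual unit on $\hat N$) collapses the composition to $f$. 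The other composition is treated identically.

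Finally, I would verify compatibility with the two commuting $A$-actions. On $M\hat\otimes N=\Hom_{\F_q}^{cont}(\hat N, M)$ the first copy of $A$ acts by post-composition using the $A$-action on $M$, and the second by pre-composition using the $A$-action on $\hat N$. Since the Pontryagin dual of an $A$-linear map is $A$-linear and $\eta_N$ is $A$-linear, $\Phi_{M,N}$ carries these two actions respectively to the pre-composition action on $\hat M$ and the post-composition action on $N$ in $N\hat\otimes M$; that is, it intertwines the two $A\otimes A$-module structures via the flip $a\otimes b\mapsto b\otimes a$, which is the desired symmetric $A\otimes A$-module isomorphism.

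The main obstacle is essentially bookkeeping: one must check that every intermediate map is genuinely continuous and $A$-linear, and that the two applications of the double-dual identification compose to the identity. All of this reduces to the functoriality and naturality built into Proposition \ref{Pontryagin duality}, so there is no substantial topological difficulty beyond that result.
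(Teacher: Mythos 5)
Your proposal is correct and follows essentially the same route as the paper's (very terse) proof: the paper simply notes that Pontryagin duality is an antiequivalence on locally compact $A$-modules, so $\Hom_{\F_q}^{cont}(\hat N,M)\cong\Hom_{\F_q}^{cont}(\hat M,\hat{\hat N})\cong\Hom_{\F_q}^{cont}(\hat M,N)$, and calls the $A\otimes A$-linearity a simple check. You unpack exactly that argument, spelling out the explicit map $f\mapsto\eta_N^{-1}\circ\hat f$, verifying bijectivity via the triangle identity $\widehat{\eta_N^{-1}}=\eta_{\hat N}$, and making the flip of the two $A$-factors explicit.
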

\begin{proof}
    By Proposition \ref{Pontryagin duality}, the Pontryagin duality induces an antiequivalence of the category of locally compact $\F_q$\=/vector spaces with itself, hence we have the following natural bijections:
    \[\Hom_{\F_q}^\mathrm{cont}(\hat N,M)\cong\Hom_{\F_q}^\mathrm{cont}(\hat{M},\hat{\hat N})\cong\Hom_{\F_q}^\mathrm{cont}(\hat M,N);\]
    the $A\otimes A$\=/linearity is a simple check.
\end{proof}

We introduce some other useful terminology.

\begin{oss}
    For any set $I$, the Pontryagin dual of $\F_q^{\oplus I}$ can be identified with $\F_q^I$. In particular, for any discrete $\F_q$\=/vector space $M$, an isomorphism $\F_q^{\oplus I}\cong M$, i.e. an $\F_q$\=/basis $(m_i)_{i\in I}$, induces an isomorphism of topological vector spaces between $\F_q^I=\widehat{\F_q^{\oplus{I}}}$ and $\hat{M}$.
\end{oss}
\begin{Def}
    If $M$ is a discrete $\F_q$\=/vector space with basis $(m_i)_{i\in I}$, for all $i\in I$ we denote by $m_i^*$ the image of $(\delta_{i,j})_{j\in I}\in \F_q^I$ via the isomorphism with $\hat{M}$, so that for all $j\in I$ $m_i^*(m_j)=\delta_{i,j}$. We call $(m_i^*)_{i\in I}$ the \textit{dual basis} of $\hat{M}$ relative to $(m_i)_{i\in I}$.
\end{Def}

\begin{oss}
    In the previous definition, a generic element $f\in\hat{M}$ corresponds to $(f(m_i))_i\in\F_q^I$. It's immediate to check that, for all $m\in M$, \[f(m)=\sum_{i\in I}f(m_i)m_i^*(m),\] which is actually a finite sum, hence we are justified in the use the following formal notation: \[f=\sum_{i\in I}f(m_i)m_i^*.\] The existence and uniqueness of this expression for all $f\in\hat{M}$ explains the terminology "dual basis" for $(m_i^*)_i$.
\end{oss}

\subsection{Application to \texorpdfstring{$A$}{A}-modules}

Denote by $\K$ the completion of the fraction field $K$ of $A$ at $\infty$, where the norm of an element $c\in K$ is defined as $q^{-v_\infty(c)}$, and denote by $\C$ the completion of an algebraic closure of $\K$. Denote by $\Omega$ the module of K\"ahler differentials of $A$, which is a projective $A$\=/module of rank $1$.

The following is a fundamental result about the Pontryagin duality of $A$\=/modules, known as \emph{residue duality} (see \cite[Thm. 8]{Poonen96}).

\begin{teo}
    The computation of the residue at $\infty$ induces a perfect pairing between $\Omega\otimes_A\K$ and $\K$, which restricts to a perfect pairing between the discrete $A$\=/module $\Omega$ and the compact $A$\=/module ${\K}/{A}$. In other words, $\widehat{\Omega\otimes_A\K}\cong\K$ and $\hat\Omega\cong{\K}/{A}$.   
\end{teo}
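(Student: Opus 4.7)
Since $\Omega\otimes_A\K$ is a one-dimensional $\K$-vector space generated by $d\pi$ for any uniformizer $\pi$ at $\infty$, choosing this basis turns the unrestricted pairing into
\[(g,f)\longmapsto\res_\infty(fg\,d\pi),\]
which is the standard residue pairing realizing $\K$ as its own topological $\F_q$-dual: continuity is immediate, and perfectness follows because the coefficient of $\pi^{-1}$ in a Laurent series can be extracted by pairing against an appropriate power of $\pi$. This handles the first assertion.

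For the restriction to $\Omega\times\K/A\to\F_q$, well-definedness is exactly the residue theorem: given $a\in A$ and $\omega\in\Omega$, the form $a\omega$ extends to a rational differential on $X$ regular away from $\infty$, so vanishing of the total sum of its residues forces $\res_\infty(a\omega)=0$. This shows that $A$ is contained in the annihilator of $\Omega$, and symmetrically that $\Omega$ is contained in the annihilator of $A$.

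The main obstacle is the converse: if $f\in\K$ annihilates every $\omega\in\Omega$, then $f\in A$, and analogously in the other variable. This requires a global input beyond the local pairing. The cleanest approach is adelic: the full adèle ring $\mathbb{A}_X=\K\times\prod'_{P\ne\infty}K_P$ is topologically self-dual under $\sum_P\res_P(\cdot)$ against rational differentials, with the diagonally embedded $K$ as its own annihilator (Artin--Weil global duality). Combining this with strong approximation at the finite places, where $A=K\cap\bigl(\K\times\prod_{P\ne\infty}\O_{X,P}\bigr)$, transports the self-duality of $\mathbb{A}_X/K$ to the claimed duality between $\Omega$ and $\K/A$. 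A more elementary alternative uses Riemann--Roch: writing $A=\bigcup_n L(n\cdot\infty)$, one matches $\F_q$-dimensions of $L(n\cdot\infty)$ with suitable finite-dimensional truncations of $\Omega$ to obtain the equality level by level.

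Once the two annihilator equalities are established, the Pontryagin duality isomorphisms $\widehat{\Omega\otimes_A\K}\cong\K$ and $\hat\Omega\cong\K/A$ follow formally from perfectness of the unrestricted pairing together with closedness of $A$ in $\K$ (it is discrete and cocompact) and of $\Omega$ in $\Omega\otimes_A\K$ (it is discrete).
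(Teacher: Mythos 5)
The paper does not prove this statement; it is cited directly as \cite[Theorem 8]{Poonen}. So there is no in-paper argument to compare against, and the only thing to assess is the internal soundness of your sketch.

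Your outline is correct. The local self-duality of $\K$ under residues (choosing a basis $d\pi$ of $\Omega\otimes_A\K$ over $\K$), the residue theorem to show $\res_\infty(a\omega)=0$ for $a\in A$, $\omega\in\Omega$, and then a global input to compute the annihilators: these are exactly the ingredients, and either of the two routes you name (adelic self-duality of $\mathbb{A}_X/K$ combined with strong approximation away from $\infty$, or a Riemann--Roch dimension count matching $L(n\infty)$ against $H^0(X,\Omega^1_X(n\infty))$) does deliver the annihilator equality. Poonen's own argument is of the second, more elementary type. The final formal step is also fine: $\Omega$ is discrete in $\Omega\otimes_A\K$ (because a global section of $\Omega^1_{X\setminus\infty}$ has $v_\infty\le 2g-2$), $A$ is discrete and cocompact in $\K$ (Riemann--Roch again), and for a perfect pairing of locally compact groups the annihilator identities are equivalent to the induced isomorphisms $\hat\Omega\cong\K/\Omega^\perp$ and $\widehat{\K/A}\cong A^\perp$.

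The one real soft spot is that the write-up states the converse ("if $f\in\K$ annihilates every $\omega\in\Omega$ then $f\in A$") as the obstacle and then gestures at two strategies without carrying either out. That converse is the entire content of the global step, and some care is needed: for instance, an element of $\K$ that pairs trivially with $\Omega$ is only a Laurent tail at $\infty$, and one must produce a genuine $a\in A$ with the same tail, which is where the cohomological vanishing $H^1(X,\O_X(n\infty))=0$ for $n\gg 0$ (equivalently, the stabilization in Riemann--Roch) is actually used. If you expand the sketch, that is the place to spend a few more lines. As a blind reconstruction of the theorem being cited, though, the plan is on target.
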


\begin{oss}\label{oss dual}
    For any discrete projective $A$\=/module $\Lambda$ of finite rank $r$, we have the following natural isomorphisms of topological $A$\=/modules, where $\Lambda^*\coloneqq\Hom_A(\Lambda,A)$:
    \[\widehat{\Lambda^*\otimes_A\Omega}=\Hom_{\F_q}(\Lambda^*\otimes_A\Omega,\F_q)\cong\Hom_A(\Lambda^*,\Hom_{\F_q}(\Omega,\F_q))\cong\Lambda\otimes_A\left(\faktor{\K}{A}\right).\]
    Retracing the isomorphisms, it's easy to check that the pairing \[(\Lambda^*\otimes_A\Omega)\otimes\left(\Lambda\otimes_A{\K}/{A}\right)\to \F_q\] sends the element $(\lambda^*\otimes\omega)\otimes(\lambda\otimes b)$ to the pairing of $\lambda^*(\lambda)b\otimes\omega\in{\K}/{A}\otimes_A\Omega$.
\end{oss}

We now show that in some cases the topological tensor product of two spaces is naturally isomorphic to a completion of their tensor product. This makes our notation agree with the usual notation $\C\hat\otimes A$ employed for the Tate algebra in works like \cite{GM21}, \cite{Ferraro}, and others.

\begin{Def}\label{Def complete space}
    Let $C$ be a topological vector space which is the projective limit of a diagram of discrete $\F_q$\=/vector spaces $\{C_i\}_{i\in I}$: we call such a space a \textit{prodiscrete} $\F_q$\=/vector space; we call its \textit{associated filter} the collection $\U\coloneqq\{\ker(C\to C_i)\}_{i\in I}$, which is a neighborhood filter of $0$ comprised of open (and closed) subspaces of $C$. 
    
    For any discrete $\F_q$\=/vector space $M$ and any prodiscrete $\F_q$\=/vector space $C$, we denote by $C\closure\otimes M$ the completion of $C\otimes M$ with respect to the neighborhood filter of $0$ given by $\{U\otimes M\}_{U\in\U}$.
\end{Def}

\begin{ex}
    The open ball $B_r\subseteq\C$ of radius $r\in\R_{>0}$ is an $\F_q$\=/vector space, because the norm on $\C$ is non-archimedean. Since $\C$ is complete, $\C$ is a prodiscrete $\F_q$\=/vector space, with associated filter $\{B_r\}_{r\in\R_{>0}}$.
\end{ex}

\begin{prop}\label{duality tensor hom 2}
    Let $C$ be a prodiscrete $\F_q$\=/vector space and $M$ be a discrete $\F_q$\=/vector space. There is a natural $\F_q$\=/linear bijection $\Phi:C\closure\otimes M\to C\hat\otimes M$. 
    
    If we fix an $\F_q$\=/basis $(m_i)_{i\in I}$ of $M$ with corresponding dual basis $(m_i^*)_{i\in I}$ of $\hat{M}$, for any function $f\in C\hat\otimes M=\Hom^\mathrm{cont}_{\F_q}(\hat{M},C)$ we have \[\Phi^{-1}(f)=\sum_{i\in I} f(m_i^*)\otimes m_i.\]

    Moreover, if $C$ and $M$ are $A$\=/modules, $\Phi$ is $A\otimes A$\=/linear.
\end{prop}
\begin{proof}
    Fix an $\F_q$\=/basis $(m_i)_{i\in I}$ of $M$ and let $\U$ be an associated filter of $C$. Any $x\in C\closure\otimes M$ can be expressed in a unique way as $\sum_{i\in I}x_i\otimes m_i$, where $x_i\in C$ for all $i\in I$, and for all $U\in \U$ the set \[I_U\coloneqq\{i\in I\mid x_i\not\in U\}\] is finite. We define $\Phi(x):\hat{M}\to C$ as follows:
    \[\forall f\in\hat{M},\;\Phi(x)(f)\coloneqq\lim_{J\in\P^\mathrm{fin}(I)}\sum_{i\in J} f(m_i)x_i.\]
    Since $C$ is complete with respect to the neighborhood filter $\U$, and for all $U\in\U$ the set \[\{i\in I\mid f(m_i)x_i\not\in U\}\subseteq I_U\] is finite, the map $\Phi(x)$ is well defined.
    For all $U\in\U$, the set \[\{f\in\hat{M}\mid f(m_i)=0\text{ for all }i\in I_U\}\] is a neighborhood of $0$ in $\hat{M}$, and is contained in $\Phi(x)^{-1}(U)$, hence $\Phi(x)$ is continuous. Since $\Phi(x)$ is also obviously $\F_q$\=/linear, $\Phi(x)\in C\hat\otimes M$ for all $x\in C\closure\otimes M$.
    
    The map $\Phi$ is manifestly $\F_q$\=/linear, and, if $C$ and $M$ are $A$\=/modules, $A\otimes A$\=/linear, so we just need to prove bijectivity.
    On one hand, if $\Phi(x)\equiv0$, we have \[0=\Phi(x)(m_i^*)=x_i\text{ for all } i\in I,\] hence $x=0$, showing injectivity. On the other hand, if $g:\hat{M}\to C$ is a continuous function, for all $U\in\U$ the set \[\{i\in I\mid g(m_i^*)\not\in U\}\] is finite because $\hat{M}$ is compact, hence $y\coloneqq\sum_i g(m_i^*)\otimes m_i$ is an element of $C\closure\otimes M$; since \[\Phi(y)(m_i^*)=g(m_i^*)\text{ for all }i\in I,\] we have $\Phi(y)=g$.
\end{proof}

\section{Universal Anderson eigenvector}\label{section special functions}

In this section, we will define the functor of \textit{Anderson eigenvectors} relative to an Anderson module $(E,\phi)$, which generalizes the concept of special functions and Gauss-Thakur sums (see Definition \ref{Def special functor}), and prove that under some conditions it is representable (see Theorem \ref{main result} and Theorem \ref{non uniformizable case}). As a corollary, we get a variant of the result \cite[Thm. 3.11]{GM21}, in which Gazda and Maurischat described the module of special functions for any Anderson module $(E,\phi)$.

\subsection{Anderson modules}

\begin{Def}
    Given an $\F_q$\=/algebra $R$, an \emph{$R$\=/module scheme over $\C$} $G$ is a group scheme over $\C$ endowed with a compatible action of $R$, i.e. a ring homomorphism $R\to\End(G)$.
\end{Def}

If $G$ is a group scheme over $\C$, we denote by $\Lie(G)$ its tangent space at the identity, which has a natural structure of $\C$\=/vector space (it also has a Lie algebra structure, but it is trivial in all group schemes of interest to us). This association can be extended to a functor from the category of group schemes over $\C$ to that of $\C$\=/vector spaces, and given $f:G\to G'$ a morphism in the first category, we denote the induced morphism $\Lie(G)\to\Lie(G')$ as $\Lie(f)$.

Let's define Anderson $A$\=/modules (see \cite[Def. 2.5.2]{HJ20}).

\begin{Def}
    An \textit{Anderson $A$\=/module} $\underline{E}=(E,\phi)$ over $\C$ of dimension $d$ consists of an $A$\=/module scheme $E$ over $\C$ with the following properties:
    \begin{itemize}
        \item as an $\F_q$\=/module scheme over $\C$, $E$ is isomorphic to $\mathbb{G}_{a,\C}^d$;
        \item the action $\phi$ of $A$ on $E$ is such that $\Lie\phi_a-a:\Lie(E)\to\Lie(E)$ is nilpotent for all $a\in A$.
    \end{itemize} 
\end{Def}

Fix an Anderson $A$\=/module $(E,\phi)$.
There is a unique $\F_q$\=/linear function \[{\exp_\phi:\Lie(E)\to E(\C)},\] called \textit{exponential} of $\phi$, such that \[\exp_\phi\circ\Lie\phi_a=\phi_a\circ\exp_\phi\text{ for all }a\in A\] (see \cite[Thm. 5.9.6]{Goss98}); its kernel \[\Lambda_\phi\coloneqq\ker(\exp_\phi)\subseteq\Lie(E)\] is an $A$\=/module of finite rank with respect to the $A$\=/module structure induced by $\Lie\phi$ on $\Lie(E)$ (see \cite[Lemma 5.9.12]{Goss98}).

Moreover, if we fix an isomorphism $E\cong\mathbb{G}_{a,\C}^d$, the exponential function can be identified with a series in $\mathbb{C}_\infty^{d\times d}[\![\tau]\!]$---where $\tau$ is the Frobenius endomorphism---whose leading term is the identity matrix.

\begin{oss}\label{oss topology}
    Since $E$ and $\mathbb{G}_{a,\C}^d$ are isomorphic group schemes over $\C$, we can identify the set $E(\C)$ with $\mathbb{G}_{a,\C}^d(\C)=\mathbb{C}_\infty^d$ up to an automorphism of $\mathbb{G}_{a,\C}^d$ as a group scheme over $\C$. Since any such automorphism is continuous as a map from $\mathbb{C}_\infty^d$ to itself, $E(\C)$ has a natural topology, and is homeomorphic to $\mathbb{C}_\infty^d$.

    Since $\Lie(E)$ is a $d$\=/dimensional vector space over $\C$, it also has a natural topology; moreover, by the inverse function theorem applied to $\exp_\phi$, we get that $\Lambda_\phi\subseteq\Lie(E)$ is a discrete subset. In light of this remark, and since for all $a\in A$ ${\exp_\phi\circ\Lie\phi_a=\phi_a\circ\exp_\phi}$, $\exp_\phi$ is a morphism of topological $A$\=/modules.
\end{oss}

\begin{Def}
    Let $\underline{E}=(E,\phi)$ be an Anderson $A$\=/module. The discrete $A$\=/module $\Lambda_\phi\subseteq\Lie(E)$ is called the \emph{period lattice} of $\phi$. If $\exp_\phi$ is surjective, $\underline{E}$ is said to be \textit{uniformizable}; in this case, its \emph{rank} is defined as the rank of $\Lambda_\phi$ as an $A$\=/module.
\end{Def}

The following is a well-known lemma, so we just give an outline of the proof.

\begin{lemma}\label{K-action on Lie(E)}
    The $A$\=/module structure of $\Lie(E)$ induced by $\phi$ extends uniquely to a structure of topological vector space over $\K$.
\end{lemma}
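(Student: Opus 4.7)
The plan is to extend $\Lie\phi$ first algebraically from $A$ to the fraction field $K$, and then topologically from $K$ to its completion $\K$.

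First I would observe that the image $R_\C := \C\cdot\Lie\phi(A)$ is a finite-dimensional commutative $\C$-subalgebra of $\End_\C(\Lie(E))$. Since by hypothesis $\Lie\phi_a - a\cdot\mathrm{Id}$ is nilpotent for every $a\in A$, the operator $\Lie\phi_a$ has $a$ as its unique eigenvalue, so $R_\C$ is a local Artinian $\C$-algebra with residue field $\C$ (via $\Lie\phi_a\mapsto a$), and $\Lie\phi_b$ is invertible for all $b\in A\setminus\{0\}$. Thus for $x = a/b\in K$ I define $\Lie\phi_x := \Lie\phi_a\cdot\Lie\phi_b^{-1}\in R_\C$, which is independent of the representation by the commutativity of $R_\C$, yielding an $\F_q$-algebra homomorphism $\Lie\phi: K\to R_\C$ lifting the inclusion $K\hookrightarrow\C$.

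The main step is to establish continuity. I would construct a norm on $\Lie(E)$ for which the operator norm satisfies $\|\Lie\phi_a\|_{\mathrm{op}}\le \|a\|$ uniformly in $a\in A$. Choose generators $a_1,\dots,a_n$ of $A$ aligned with a minimal generating set of the pole semigroup at $\infty$, so that a canonical $\F_q$-basis of $A$ consists of monomials in the $a_i$'s with pairwise distinct $\infty$-adic valuations (hence $\|\sum c_\alpha m_\alpha\|=\max_\alpha\|m_\alpha\|$ for $c_\alpha\in\F_q^\times$). Since the commuting family $\{\Lie\phi_a - aI\}_{a\in A}$ lies in the maximal ideal of $R_\C$ and consists of nilpotents, Engel's theorem yields a basis of $\Lie(E)$ in which each $N_{a_i}:=\Lie\phi_{a_i}-a_iI$ is strictly upper triangular; rescaling this basis by a small geometric factor shrinks the off-diagonal entries and forces $\|N_{a_i}\|_{\mathrm{op}}\le\|a_i\|$ for each $i$. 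Submultiplicativity then yields $\|\Lie\phi_m\|_{\mathrm{op}}\le\|m\|$ for every monomial $m$ in the generators, and the distinct-degree property of the basis propagates this via non-Archimedean additivity to $\|\Lie\phi_a\|_{\mathrm{op}}\le\|a\|$ for every $a\in A$. The finite geometric expansion $\Lie\phi_b^{-1} = \sum_{j=0}^{d-1}(-1)^j b^{-j-1}(\Lie\phi_b - bI)^j$ then gives $\|\Lie\phi_b^{-1}\|_{\mathrm{op}}\le\|b\|^{-1}$, whence $\|\Lie\phi_x\|_{\mathrm{op}}\le\|x\|$ for all $x\in K$.

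Finally, since $K$ is dense in $\K$ and $\End_\C(\Lie(E))$ is complete, the norm-nonincreasing homomorphism $\Lie\phi: K\to\End_\C(\Lie(E))$ extends uniquely to a continuous $\F_q$-algebra homomorphism $\Lie\phi:\K\to\End_\C(\Lie(E))$; this endows $\Lie(E)$ with a $\K$-vector space structure, and the bound $\|\Lie\phi_x\|_{\mathrm{op}}\le\|x\|$ ensures the action is continuous in the natural topology on $\Lie(E)$, so $\Lie(E)$ is a topological $\K$-vector space. The main obstacle is the uniform bound on $\|\Lie\phi_a\|_{\mathrm{op}}$: a naive expansion of $a\in A$ in arbitrary generators may suffer from $\infty$-adic cancellation and destroy the estimate, so one must carefully align both the algebra generators (with the Weierstrass semigroup) and the basis of $\Lie(E)$ (via simultaneous trigonalization and rescaling) in order to push the monomial-level bound through to every $a\in A$.
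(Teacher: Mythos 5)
Your proof is correct and follows essentially the same route as the paper's: extend $\Lie\phi$ to $K$ using commutativity and invertibility, fix a basis in which the nilpotent parts $N_{a}$ are strictly upper triangular, choose generators aligned with the Weierstrass (pole) semigroup at $\infty$ so that $A$ has an $\F_q$-basis of monomials with pairwise distinct degrees, use non-Archimedean estimates to get a uniform bound, and extend by density to $\K$.

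The one genuine difference is your rescaling step. The paper does not rescale the basis of $\Lie(E)$ and therefore only proves a uniform dilation bound, $\|\Lie\phi_c\|\le\|c\|\,M^{nd^2}$ for an explicit constant $M^{nd^2}$ depending on the generators and the dimension $d$; this is enough for Lipschitz continuity and hence for the extension to $\K$. You instead shrink the off-diagonal entries of the $N_{a_i}$ by conjugating with $\mathrm{diag}(1,\epsilon,\dots,\epsilon^{d-1})$ (which is legitimate, since the $N_{a_i}$ are simultaneously strictly upper triangular), so that $\|N_{a_i}\|_{\mathrm{op}}\le\|a_i\|$, and you then propagate this through monomials and the distinct-degree basis to get the cleaner norm-nonincreasing estimate $\|\Lie\phi_x\|_{\mathrm{op}}\le\|x\|$ for all $x\in K$. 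This is a tidier endpoint than the paper's constant $M^{nd^2}$, but it is a refinement of the same argument rather than a different one, and the extra strength is not used for the conclusion of the lemma. One small presentational remark: you should note that for the nonconstant generators $a_i$ one has $\|a_i\|\ge q>1$ (while constants contribute $N_c=0$), which is what makes the target bound $\|N_{a_i}\|_{\mathrm{op}}\le\|a_i\|$ achievable by shrinking $\epsilon$; otherwise the rescaling claim would need more justification.
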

\begin{proof}
    Since the endomorphisms $(\Lie\phi_a)_{a\in A\setminus\{0\}}$ commute and are invertible, the ring homomorphism $\Psi:A\to \End_\C(\Lie(E))$ sending $a$ to $\Lie\phi_a$ can be extended uniquely to $K$, and we can fix a basis $\Lie(E)\cong\mathbb{C}_\infty^d$ in which, for all $c\in K$, $\Psi_c$ is a triangular matrix with $N_c\coloneqq c^{-1}\Psi_c-\mathrm{Id}_d$ nilpotent---precisely, $N_c^d=0$. We can endow $\End_\C(\mathbb{C}_\infty^d)$ with a matrix norm $|\cdot|$ sending a matrix to the maximum of the norms of its coefficients; since the norm on $\C$ is non-archimedean, $|\cdot|$ is submultiplicative. To extend continuously $\Psi$ to $\K$, it suffices to prove that the set $\{|c^{-1}\Psi_c|\}_{c\in K\setminus\{0\}}$ is bounded, so that $|\Psi_c|$ tends to $0$ as $\|c\|$ tends to $0$.

    Since $A$ is a finitely generated $\F_q$\=/algebra, we can pick a finite set $\{a_1,\dots,a_n\}$ such that the finite products of the $a_i$'s generate $A$ as an $\F_q$\=/vector space. 
    
    Call $M\coloneqq\max\{1,|N_{a_1}|,\dots,|N_{a_n}|\}$. For $i=1,\dots,n$, for all $m\geq0$, since $N_{a_i}^d=0$:
    \[a_i^{-m}\Psi_{a_i^m}=(a_i^{-1}\Psi_{a_i})^m\in\Span_{\F_q}\left(\{\mathrm{Id}_d,N_{a_i},\dots,N_{a_i}^{d-1}\}\right),\] hence $|\Psi_{a_i^m}|< \|a_i\|^m M^d$. Fix any $b\in A$: since it is a linear combination of products of powers of the $a_i$'s, and since the norm $|\cdot|$ is non-archimedean, we have $|b^{-1}\Psi_b|<M^{nd}$, hence \[|N_b|\leq\max(\{1,|b^{-1}\Psi_b|\})< M^{nd}.\]
    For all $c\in K^\times$, if we write $c=ab^{-1}$ with $a,b\in A\setminus\{0\}$, we have:
    \[|c^{-1}\Psi_c|=|a^{-1}\Psi_a(b^{-1}\Psi_b)^{-1}|=\left|(\mathrm{Id}_d+N_a)\left(\sum_{i=0}^{d-1}(-N_b)^i\right)\right|< M^{nd^2},\]
    which concludes the proof.
\end{proof}

\subsection{Functor of Anderson eigenvectors}

Let's endow $E(\C)$ with the $A$\=/module structure induced by $\phi$ and with the natural topology of Remark \ref{oss topology}.

\begin{Def}\label{Def special functor}
    For any discrete $A$\=/module $M$, its set of \textit{Anderson eigenvectors} is defined as the $A$\=/module of continuous $A$\=/linear homomorphisms \[\Hom_A^\mathrm{cont}(\hat{M},E(\C))\subseteq E(\C)\hat\otimes M.\]
    We denote by \[\Sf_\phi:A\mbox{-}\Mod\to A\mbox{-}\Mod\] the functor that extends this map in the straightforward way.
\end{Def}

\begin{oss}
    By Proposition \ref{duality tensor hom 2}, our definition of $E(\C)\hat\otimes A$ coincides with the one given in \cite{GM21}. The $A$\=/module $\Sf_\phi(A)$ is the subset of $E(\C)\hat\otimes A$ comprised of the elements on which the left and right $A$\=/actions coincide, hence $\Sf_\phi(A)$ is the module of special functions as defined in \cite{GM21}.
\end{oss}

\begin{lemma}
    The maximal compact $A$\=/submodule of $E(\C)$ is $\exp_\phi(\K\Lambda_\phi)$, which is naturally isomorphic to $(\K\otimes_A\Lambda_\phi)/\Lambda_\phi$.
\end{lemma}

\begin{proof}
    The map $\exp_\phi$ is open because its Jacobian at all points is the identity; call $C$ its image. Since $C$ is an open $\F_q$\=/vector space, the quotient ${E(\C)}/{C}$ is a discrete $A$\=/module; in particular, the maximal compact $A$\=/submodule of $E(\C)$ is contained in $C$, which is isomorphic to ${\Lie(E)}/{\Lambda_\phi}$ as a topological $A$\=/module via $\exp_\phi$.

    Endow $\Lie(E)$ with the structure of topological $\K$\=/vector space described in Lemma \ref{K-action on Lie(E)}.  Since the latter is a compact $A$\=/module, and is isomorphic to ${(\K\otimes_A\Lambda_\phi)/\Lambda_\phi}$, this concludes the proof.
\end{proof}

Endow $\Lie(E)$ with the structure of topological $\K$\=/vector space described in Lemma \ref{K-action on Lie(E)}; the following lemma is fundamental to describe the functor $\Sf_\phi$.

\begin{lemma}\label{lemma new}
The quotient of the $\K$\=/vector subspace $\K\Lambda_\phi\subseteq\Lie(E)$ by $\Lambda_\phi$ is the maximal compact $A$\=/submodule of ${\Lie(E)}/{\Lambda_\phi}$.
\end{lemma}

\begin{proof}
    Since $\K\Lambda_\phi\subseteq\Lie(E)$ is finitely generated, it is closed, hence we can endow the quotient
    \[V\coloneqq \frac{\Lie(E)}{\K\Lambda_\phi}=\frac{\faktor{\Lie(E)}{\Lambda_\phi}}{\faktor{\K\Lambda_\phi}{\Lambda_\phi}}\]
    with a natural structure of topological $\K$\=/vector space. In particular, the maximal compact $A$\=/submodule of $V$ is $\{0\}$, hence any compact $A$\=/submodule of $\Lie(E)/\Lambda_\phi$ is contained in the compact $A$\=/module $\K\Lambda_\phi/\Lambda_\phi$.
\end{proof}

\begin{teo}\label{main result}
    Assume that $E$ is uniformizable. The functor $\Sf_\phi$ is naturally isomorphic to $\Hom_A(\Lambda_\phi^*\otimes_A\Omega,\_)$; moreover, the universal object in $E(\C)\hat\otimes\Hom_A(\Lambda_\phi,\Omega)$ corresponds to the map \[\widehat{\Hom_A(\Lambda_\phi,\Omega)}\cong{\K\Lambda_\phi}/{\Lambda_\phi}\to E(\C)\] sending the projection of $c\in\K\Lambda_\phi$ to $\exp_\phi(c)$.
\end{teo}

\begin{proof}
    Since $E$ is uniformizable, the exponential map induces and isomorphism of topological $A$\=/modules between ${\Lie(E)}/{\Lambda_\phi}$ and $E(\C)$, which sends $\K\Lambda_\phi/\Lambda_\phi$ to $\exp_\phi(\K\Lambda_\phi)$. By Lemma \ref{lemma new}, $\exp_\phi(\K\Lambda_\phi)$ is the maximal compact $A$\=/submodule of $E(\C)$
    
    For any discrete $A$\=/module $M$, $\hat{M}$ is compact, hence for any function in \[\Sf_\phi(M)=\Hom_A^{\mathrm{cont}}(\hat{M},E(\C)),\] its image is contained in $\exp_\phi(\K\Lambda_\phi)$. In particular, we have the following natural isomorphisms:
    \begin{align*}
        \Sf_\phi(M)&=\Hom_A^\mathrm{cont}\left(\hat{M},\exp_\phi(\K\Lambda_\phi)\right)\\
        &\cong\Hom_A^\mathrm{cont}\left(\hat{M},\faktor{\K\Lambda_\phi}{\Lambda_\phi}\right)\\
        &\cong\Hom_A(\Lambda_\phi^*\otimes_A\Omega,M),
    \end{align*}
    where we used Lemma \ref{duality tensor hom 1} to prove the last isomorphism.
    
    Setting $M\coloneqq\Lambda_\phi^*\otimes_A\Omega$, and following the identity along the chain of isomorphisms, we deduce that the universal object \[\omega_\phi\in\Hom_A^\mathrm{cont}\left({\K\Lambda_\phi}/{\Lambda_\phi},E(\C)\right)\] is the continuous $A$\=/linear map sending the projection of $c\in\K\Lambda_\phi$ to $\exp_\phi(c)$.
\end{proof}

For the sake of completeness, let's prove a statement which does not assume uniformizability.

\begin{teo}\label{non uniformizable case}
    If we restrict the functor $\Sf_\phi$ to the subcategory of torsionfree $A$\=/modules, it is naturally isomorphic to $\Hom_A(\Hom_A(\Lambda_\phi,\Omega),\_)$; moreover, the universal object in $E(\C)\hat\otimes\Hom_A(\Lambda_\phi,\Omega)$ corresponds to the map \[\widehat{\Hom_A(\Lambda_\phi,\Omega)}\cong{\K\Lambda_\phi}/{\Lambda_\phi}\to E(\C)\] sending the projection of $c\in\K\Lambda_\phi$ to $\exp_\phi(c)$.
\end{teo}
\begin{proof}
    The map $\exp_\phi$ is open because its Jacobian at all points is the identity; call $C$ its image. Since $C$ is an open $\F_q$\=/vector space, the quotient ${E(\C)}/{C}$ is a discrete $A$\=/module.
    
    A discrete $A$\=/module $M$ is torsionfree if and only if it has no nontrivial compact submodules; in this case, $\hat{M}$ is a compact $A$\=/module with no nontrivial discrete quotients. In particular, for any function $f\in \Sf_\phi(M)=\Hom_A^\mathrm{cont}(\hat{M},E(\C))$, its projection onto ${E(\C)}/{C}$ is trivial, hence the image of $f$ must be contained in $C$. The rest of the proof is the same as Theorem \ref{main result} up to substituting $E(\C)$ with $C$.
\end{proof}
\begin{Def}
    We define the \emph{universal Anderson eigenvector} \[\omega_\phi\in\C\hat\otimes\Hom_A(\Lambda_\phi,\Omega)\] as the universal object of the functor $\Sf_\phi$.
\end{Def}
As a corollary, we can describe the isomorphism class of the module of special functions $\Sf_\phi(A)$ for any Anderson module $E$, as already done by Gazda and Maurischat (\cite[Thm. 3.11]{GM21}).
\begin{cor}\label{cor Gazda}
    The following isomorphism of $A$\=/modules holds:
    \[\Sf_\phi(A)=\{\omega\in E(\C)\hat\otimes A\mid\phi_a(\omega)=(1\otimes a)\omega\text{ for all } a\in A\}\cong\Omega^*\otimes_A\Lambda_\phi.\]
\end{cor}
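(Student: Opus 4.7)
The plan is to deduce the corollary by applying the preceding proposition with $M := A$ and then performing standard tensor-hom manipulations over the Dedekind ring $A$. The first equality in the displayed formula requires no genuinely new work: by Lemma \ref{duality tensor hom 2}, an element of $E(\C)\hat\otimes A$ corresponds to a continuous $\F_q$-linear map $\hat{A}\to E(\C)$, and the equality of the two $A$-module structures on this Hom space (one induced by $\phi$ on the target, the other by multiplication on $A$) is exactly the condition $\phi_a(\omega)=(1\otimes a)\omega$ for all $a\in A$. Rephrased, this is $A$-linearity of the associated map, so we recover $\Hom_A^{cont}(\hat{A},E(\C))=Sf_\phi(A)$, as already observed in the remark following the definition of $Sf_\phi$.

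For the second isomorphism I would first note that $A$, being a domain, is torsion-free over itself and hence torsionless, so the preceding Proposition applies and yields
\[Sf_\phi(A)\cong\Hom_A(\Lambda_E^*\otimes_A\Omega,\,A).\]
Tensor-hom adjunction rewrites the right-hand side as $\Hom_A(\Lambda_E^*,\Omega^*)$. Now $A$ is a Dedekind domain and $\Lambda_E$ is a finitely generated torsion-free $A$-module (it has finite rank and sits discretely inside $\Lie(E)$), hence projective and in particular reflexive. Combined with the fact that $\Omega^*$ is invertible, the canonical evaluation map $\Lambda_E\otimes_A\Omega^*\to\Hom_A(\Lambda_E^*,\Omega^*)$ is then an isomorphism, giving $Sf_\phi(A)\cong\Omega^*\otimes_A\Lambda_E$.

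I do not anticipate a real obstacle: every step is either a formal adjunction or a classical fact about projective modules over a Dedekind domain. The only point worth a careful check is that the $A\otimes A$-module structure transported through Lemma \ref{duality tensor hom 2} genuinely identifies the $\phi$-action and the $1\otimes A$-action on $E(\C)\hat\otimes A$ with the two natural $A$-actions on $\Hom_{\F_q}^{cont}(\hat{A},E(\C))$, but this is precisely the $A\otimes A$-equivariance already established inside the proof of that lemma.
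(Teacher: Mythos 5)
Your argument is correct, and it reconstructs what the paper leaves implicit: no explicit proof is given for this corollary, which the author presents as an immediate consequence of the preceding propositions. You rightly note that the first equality is just Lemma \ref{duality tensor hom 2} together with the remark after the definition of $Sf_\phi$, and that one should invoke the torsionless variant of the representability proposition (rather than the uniformizable one) since $A$ is torsionless and no uniformizability hypothesis appears in the corollary. The final tensor-hom manipulation, using reflexivity of the finite-rank projective $A$-module $\Lambda_E$ and invertibility of the rank-one module $\Omega^*$ over the Dedekind domain $A$, is exactly the standard computation needed to pass from $\Hom_A(\Lambda_E^*\otimes_A\Omega,\,A)$ to $\Omega^*\otimes_A\Lambda_E$.
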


\begin{oss}\label{oss universal special function}
    Fix an $\F_q$\=/basis $(\mu_i)_i$ of the discrete $A$\=/module $\Hom_A(\Lambda_\phi,\Omega)$, with $(\mu_i^*)_i$ dual basis of ${\K\Lambda_\phi}/{\Lambda_\phi}$. By Proposition \ref{duality tensor hom 2} we can express the universal object in the following alternative way as an element of $E(\C)\hat\otimes\Hom_A(\Lambda_\phi,\Omega)$:
    \[\omega_\phi=\sum_i\exp_\phi(\mu_i^*)\otimes\mu_i,\]
    where by slight abuse of notation we considered $\exp_\phi$ as a map from ${\K\Lambda_\phi}/{\Lambda_\phi}$ to $E(\C)$.
\end{oss}

\section{Proof of a conjecture of Gazda and Maurischat}\label{section question}

\begin{Def}
    An Anderson module $(E,\phi)$ of dimension $1$ is called \emph{Drinfeld module}.
\end{Def}

It's known that all Drinfeld modules are uniformizable (see \cite{Goss98}[Prop. 2.13]).
We apply the results of Section \ref{section special functions} in the context of Drinfeld module $(E,\phi)$ of rank $1$, with the further assumption that $\infty\in X(\F_q)$, to answer a question posed by Gazda and Maurischat in \cite{GM21}. For simplicity, we will assume $E=\G_a$, so that $E(\C)=\C$.

\begin{oss}
    Under this assumption, if we denote by $\tau:\C\to\C$ the Frobenius endomorphism, the algebra of endomorphisms of $E$ as an $\F_q$\=/module scheme over $\C$ is canonically isomorphic to the noncommutative ring $\C[\tau]$, where $c^q \tau=\tau c$.

    Similarly, we can think of the action of $A$ on $E$ as a ring homomorphism ${\phi:A\to\C[\tau]}$ sending $a$ to $\phi_a=\sum_i a_i\tau^i$, and of the exponential $\exp_\phi:\C\to\C$ as an element of the noncommutative ring of power series $\C[\![\tau]\!]$.
\end{oss}

In this case, the properties of power series over $\C$ allow us to express the exponential function as an infinite product as follows (see for example \cite[Section 4.2]{Goss98}).

\begin{prop}
    Let $(\G_a,\phi)$ be a Drinfeld module. The exponential function, as a power series in $\C[\![z]\!]$, has the following product expansion:
    \[\exp_\phi(z)=z\prod_{\lambda\in\Lambda_\phi\setminus\{0\}}\left(1-\frac{z}{\lambda}\right).\]
\end{prop}

Let $f_\phi\in\Frac( A_\C)$ denote the shtuka function associated to the Drinfeld module (see \cite{Thakur93}, \cite[Def. 7.11.2]{Goss98}). The following property holds (see \cite[Lemma 3.6]{ANDTR17a} and \cite[Rmk. 3.10]{ANDTR17a}, or \cite[Prop. 3.18]{GM21}).

\begin{prop}
    For all $\omega\in\C\hat\otimes A$: \[\omega\in \Sf_\phi(A)\iff(\tau\otimes1)\omega=f\omega.\]
\end{prop}

In particular, if there is some $\omega\in \Sf_\phi(A)$ which is an invertible element of the ring $\C\hat\otimes A$, for all $\omega'\in \Sf_\phi(A)$ we have \[(\tau\otimes1)\left(\frac{\omega'}{\omega}\right)=\frac{\omega'}{\omega},\] i.e. $\frac{\omega'}{\omega}\in\F_q\otimes A$, hence $\Sf_\phi(A)=A\cdot\omega$.

The conjecture of Gazda and Maurischat in \cite{GM21} is about the converse statement.

\begin{conj}[{\cite[Question]{GM21}}]
    If $\Sf_\phi(A)\cong A$, there is some $\omega\in\Sf_\phi(A)$ which is invertible as an element of $\C\hat\otimes A$.
\end{conj} 

We answer affirmatively with Theorem \ref{teo Gazda}.

First, we prove two results to show that Pontryagin duality is well-behaved with respect to norms. For starters, we endow the space $\widehat{\K}\cong\Omega\otimes_A\K$ with a norm $|\cdot|$ such that it is a normed vector space over $(\K,\|\cdot\|)$, and for any ideal $J\subseteq A$ we use the same notation for the induced norm on the quotient $\hat{J}$; note that, since $\widehat{\K}$ has dimension $1$ as a $\K$\=/vector space, $|\cdot|$ is unique up to a scalar factor in $\R_{>0}$.

\begin{prop}\label{norm pairing}
    There is some scalar factor $\alpha\in \R_{>0}$ such that, for all $f\in\widehat{\K}\setminus\{0\}$, 
    $\alpha|f|^{-1}$ is the minimum norm of the elements of the closed subspace $\K\setminus f^{-1}(0)\subseteq\K$.
\end{prop}
\begin{proof}
    Let $t\in\K$ be a uniformizer: since $\infty\in X(\F_q)$, we can identify $\K$ with $\F_q(\!(t)\!)$, where if the series \[p(t)=\sum_{i\in\Z}\lambda_i t^i\in\F_q(\!(t)\!)\] has leading term $\lambda_k t^k$, its norm is $q^{-k}$. Consider the function $dt\in\widehat{\F_q(\!(t)\!)}$ which sends $p(t)$ as defined above to $\lambda_{-1}$: under the identification $\K=\F_q(\!(t)\!)$, we have \[\widehat{\K}=\F_q(\!(t)\!)dt,\] and up to a scalar factor in $\R_{>0}$ we can assume $|dt|=q^{-1}$.
    
    Take $\mu\in\F_q(\!(t)\!)dt\setminus\{0\}$ with leading term $b_k t^k dt$, so that $|\mu|=q^{-k-1}$: if $p\in\F_q(\!(t)\!)$ has $\|p\|< q^{k+1}$, its leading term has degree at least $-k$, hence $\mu(p)=0$; on the other hand $\|t^{-k-1}\|=q^{k+1}$ and $\mu(t^{-k-1})=b_k\neq0$. In particular, $q^{k+1}=|\mu|^{-1}$ is the minimum norm of the elements in $\F_q(\!(t)\!)\setminus\mu^{-1}(0)$.
\end{proof}
\begin{prop}\label{monotony}
Let $J\subseteq A$ be a nonzero ideal and fix an $\F_q$\=/basis $(a_i)_{i\in I}$ of $J$ strictly ordered by degree, with $(a_i^*)_{i\in I}$ dual basis of $\hat{J}$. The sequence $(|a_i^*|)_{i\in I}$ is strictly decreasing.
\end{prop}
\begin{proof}
We can assume $I\subseteq\Z$ to be the set of degrees of elements in $J$, and that $a_i$ has degree $i$ for all $i\in I$. For all $i\in I$ set $b_i\coloneqq a_i$, while for all $i\in\Z\setminus I$ choose some $b_i\in\K$ with valuation $-i$: since all nonzero elements of $\K$ have integer valuation, it's easy to check that every $c\in\K$ can be expressed in a unique way as $\sum_{i\in\Z}\lambda_i b_i$ where $\lambda_i\in\F_q$ for all $i\in\Z$ and $\lambda_i=0$ for $i\gg0$. Denote by $(b_i^*)_{i\in\Z}$ the sequence in $\widehat{\K}$ determined by the property $b_i^*(b_j)=\delta_{i,j}$ for all $i,j\in\Z$. By Proposition \ref{norm pairing}, up to rescaling $|\cdot|$ by some positive real factor, we have for all $i\in\Z$:
\begin{align*}
    |b_i^*|^{-1}=\min\{\|c\|\text{ s.t. }c\in\K\text{ and }b_i^*(c)\neq0\}=&\min\left\{\left\|\sum_{j\in\Z}\lambda_j b_j\right\|\text{ s.t. }\lambda_i\neq0\right\}=\|b_i\|.
\end{align*}

Let's prove that any $c\in\widehat{\K}$ can be expressed in a unique way as a series $\sum_{i\in\Z}\lambda_i b_i^*$ with $\lambda_i\in\F_q$ for all $i$ and $\lambda_i=0$ for $i\ll0$. To prove uniqueness, we have:
\[c=\sum_{i\in\Z}\lambda_i b_i^*\Leftrightarrow c(b_j)=\left(\sum_{i\in\Z}\lambda_i b_i^*\right)(b_j)\forall j\in\Z\Leftrightarrow c(b_j)=\lambda_j \forall j\in\Z.\]
To prove existence, since $c$ is continuous, $c(b_j)=0$ for $j\ll0$, and since the sequence $(|b_j^*|)_{j\in\Z}=(\|b_j\|^{-1})_{j\in\Z}$ is strictly decreasing and tends to $0$, the series $\sum_{i\in\Z}c(b_i) b_i^*$ converges in $\widehat{\K}$.

For any $c\in\widehat{\K}$, call $\overline{c}$ its restriction to $J$, in $\hat{J}$. Since $(b_i)_{i\in I}=(a_i)_{i\in I}$ is an $\F_q$\=/basis of $J$, we have $\overline{b_i^*}=a_i^*$ if $i\in I$, and $\overline{b_i^*}=0$ otherwise. For all $i\in I$, we have:
\[|a_i^*|=\min\{|c|\text{ s.t. }\overline{c}=a_i^*\}=\min\left\{\left|\sum_{j\in\Z}\lambda_j b_j^*\right|\text{ s.t. }\lambda_j=\delta_{i,j}\text{ for all }j\in I\right\}=|b_i^*|,\]
which is equal to $\|a_i\|^{-1}$.
\end{proof}

\begin{teo}\label{teo Gazda}
    Assume that $\Sf_\phi(A)$ is free of rank $1$. Then, there is a special function in $\Sf_\phi(A)$ which is invertible as an element of $\C\hat\otimes A$.
\end{teo}
\begin{proof}
    The assumption implies $\Lambda_\phi\cong\Omega$.
    Fix an $\F_q$\=/basis $(a_i)_{i\in I}$ of $A$ like in the proof of Proposition \ref{monotony}, with $a_0=1$, and let $(a_i^*)_{i\in I}$ be the dual basis of its Pontryagin dual \[\hat{A}\cong{\Omega\otimes_A\K}/{\Omega}\cong{\K\Lambda_\phi}/{\Lambda_\phi}.\]
    
    By Remark \ref{oss universal special function}, we can write the universal Anderson eigenvector as an infinite series \[\omega_\phi=\sum_i\exp_\phi(a_i^*)\otimes a_i\in\C\hat\otimes A.\] To prove it is invertible, it suffices to show that, for all $i\geq1$, \[\|\exp_\phi(a_0^*)\|>\|\exp_\phi(a_i^*)\|:\] indeed, if this is the case, and we set \[\omega\coloneqq(\exp_\phi(a_0^*)^{-1}\otimes1)\omega_\phi,\] the element $1-\omega\in\C\hat\otimes A$ has norm less than $1$, hence the series $\sum_{n\geq0}(1-\omega)^n$ converges in $\C\hat\otimes A$, and is an inverse to $1-(1-\omega)=\omega$.
    
    For all indices $i$, choose a lift $c_i\in\K\Lambda_\phi\subseteq\C$ of $a_i^*\in{\K\Lambda_\phi}/{\Lambda_\phi}$ with the least possible norm, so that $\|c_i\|=|a_i^*|$; in particular, since $\Lambda_\phi$ has rank $1$, there are no $\lambda\in\Lambda_\phi$ such that $\|\lambda\|=\|c_i\|$, so we have:
    \[\|\exp_\phi(a_i^*)\|=\|c_i\|\prod_{\lambda\in\Lambda_\phi\setminus\{0\}}\left\|1-\frac{c_i}{\lambda}\right\|=\|c_i\|\prod_{\substack{\lambda\in\Lambda_\phi\setminus\{0\}\\\|\lambda\|\leq\|c_i\|}}\left\|1-\frac{c_i}{\lambda}\right\|=\|c_i\|\prod_{\substack{\lambda\in\Lambda_\phi\setminus\{0\}\\\|\lambda\|<\|c_i\|}}\left\|\frac{c_i}{\lambda}\right\|.\]
    Since by Proposition \ref{monotony} the sequence $(\|c_i\|)_i$ is strictly decreasing, from the previous equality we deduce that the sequence $(\|\exp_\phi(a_i^*)\|)_i$ is also strictly decreasing. In particular, $\|\exp_\phi(a_0^*)\|>\|\exp_\phi(a_i^*)\|$ for all $i\geq1$.
\end{proof}

\section{Some remarkable identities\texorpdfstring{ in $\C$}{}}\label{section technical lemmas}

In this section, we focus on a Drinfeld module $\phi$ of arbitrary rank. We want to adapt the approach of the previous section to adjoint Drinfeld modules; to do so, we rely on a useful result by Poonen (\cite[Thm. 10]{Poonen96}).

\subsection{Poonen duality}\label{subsection Poonen}

Let $\C[\![\tau,\tau^{-1}]\!]$ denote the $\C[\tau,\tau^{-1}]$\=/bimodule of unbounded formal power series in $\tau$.
For any formal series $s=\sum_i s_i\tau^i\in\C[\![\tau,\tau^{-1}]\!]$, we define its adjoint as \[s^*\coloneqq\sum_i\tau^{-i}s_i=\sum_i s_i^{q^{-i}}\tau^{-i}\in\C[\![\tau,\tau^{-1}]\!].\]

\begin{oss}
    Despite the notation for the space $\C[\![\tau,\tau^{-1}]\!]$, its structure of $\C[\tau,\tau^{-1}]$\=/bimodule cannot be extended to a ring structure.
\end{oss}

\begin{oss}
    It's easy to show that, since $\exp_\phi$ has an infinite radius of convergence, the adjoint exponential $\exp_\phi^*\in\C[\![\tau^{-1}]\!]$ also converges everywhere on $\C$.
\end{oss}

We follow a construction due to Poonen, who proved a duality result of central importance to this section (\cite[Thm. 10]{Poonen96}).
\begin{lemma}\label{lemma g_beta}
    For all $\beta\in\ker(\exp_\phi^*)\setminus\{0\}$, there is a unique element $g_\beta\in\C[\![\tau]\!]$ such that $(1-\tau)g_\beta=\beta\exp_\phi$. Moreover, $g_\beta$ has infinite radius of convergence.
\end{lemma}
\begin{proof}
    Let's set $h:=\sum_{i\geq0}\tau^i$; since $h(1-\tau)=1$, the defining property of $g_\beta$ is equivalent to the identity $g_\beta=h\beta\exp_\phi$: this implies both existence and uniqueness. If we call $e_i$ the $i$\=/th coefficient of $\exp_\phi$ and $c_i$ the $i$\=/th coefficient of $g_\beta$, from the identity $g_\beta=h\beta\exp_\phi$ we get the following:
    \[c_k=\sum_{i=0}^k\beta^{q^i}e_{k-i}^{q^i}\text{ for all }k\in\Z_{\geq0}\Longrightarrow\lim_k c_k^\frac{1}{q^k}=\lim_k\sum_{i=0}^k e_i^{q^{-i}}\beta^{q^{-i}}=\exp_\phi^*(\beta)=0,\]
    hence the radius of convergence of $g_\beta$ is infinite.
\end{proof}
\begin{oss}
    Since $\ker((1-\tau)g_\beta)=\ker(\beta\exp_\phi)=\Lambda_\phi$, $g_\beta|_{\Lambda_\phi}$ has image in $\F_q$.
\end{oss}

By convention, we set $g_0=0$.

\begin{teo}[{\cite[Thm. 10]{Poonen96}}]\label{Poonen96}
The function $\ker(\exp_\phi^*)\to\hat\Lambda_\phi$ sending $\beta$ to $g_\beta|_{\Lambda_\phi}$ is an $A$\=/linear homeomorphism, where $A$ acts via $\phi^*$ on the left hand side.
\end{teo}

The following proposition, which is proven in Section \ref{section technical lemmas}, can be viewed as an explicit algebraic formula for the inverse of the isomorphism in Theorem \ref{Poonen96}.

\begin{prop*}[Prop. \ref{identity 1}]
    For all $\beta\in\ker(\exp_\phi^*)\setminus\{0\}$, the following identity holds in $\C$:
    \[\beta=-\sum_{\lambda\in\Lambda_\phi\setminus\{0\}}\frac{g_\beta(\lambda)}{\lambda}.\]
\end{prop*}

In the following subsection, we include some technical lemmas necessary for the proof of Proposition \ref{identity 1}. With the same lemmas we are also able to prove the following proposition, where \[\exp_\phi=\sum_{i\geq0}e_i\tau^i,\] and we denote by \[\log_\phi=\sum_{i\geq0}l_i\tau^i\] its inverse in $\C[\![\tau]\!]$.

\begin{prop*}[Prop. \ref{identity 2}]
    For all integers $k$, for all $c\in\K\setminus\{0\}$ with $\|c\|\leq q^{\frac{k-1}{r}}$, the following identity holds in $\C$:
    \[\sum_{\lambda\in\Lambda_\phi\setminus\{0\}}\frac{\exp_\phi(c\lambda)}{\lambda^{q^k}}=-\sum_{j=0}^k e_jl_{k-j}^{q^j}c^{q^j},\]
    where by convention the summation on the right hand side is $0$ if $k<0$.
\end{prop*}

\subsection{Lattices}

Throughout this subsection, $C$ will be a complete normed $\K$\=/vector space (with non-archimedean norm).

\begin{Def}\label{def Lambda_m}
    An infinite $\F_q$\=/linear subspace $V\subseteq C$ is a \textit{lattice} if for any positive real number $r$ there are finitely many elements of $V$ of norm at most $r$.
    
    An \textit{ordered basis} of $V$ is a sequence $(v_i)_{i\geq1}$ with the following property: for all $m\geq1$, $v_m$ is an element of $V\setminus\Span_{\F_q}(\{v_i\}_{i<m})$ of least norm.

    We call the sequence of real numbers $(\|v_i\|)_{i\geq1}$ a \textit{norm sequence} of $V$.
\end{Def}

The next two results aim to justify the nomenclature "ordered basis" and to prove that the norm sequence does not depend on the choice of the ordered basis.

\begin{oss}
    If $V\subseteq C$ is a lattice, every subset $S\subseteq V$ has an element of least norm. In particular, we can construct an ordered basis of $V$ by recursion.
\end{oss}

\begin{lemma}
    If $(v_i)_{i\geq1}$ is an ordered basis of a lattice $V\subseteq C$, it is a basis of $V$ as an $\F_q$\=/vector space.
\end{lemma}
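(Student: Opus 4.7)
My plan is to verify the two conditions for being an $\F_q$-basis separately. Linear independence is immediate from the defining property of an ordered basis, since $v_m \notin \Span(\{v_i\}_{i<m})$ for every index $m$ in the sequence; this says precisely that no $v_m$ is an $\F_q$-linear combination of its predecessors.

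The real content is the spanning property. I would first establish that the norm sequence $(\|v_i\|)_{i\geq 1}$ is non-decreasing. This follows from the chain of inclusions $V\setminus\Span(\{v_1,\dots,v_m\})\subseteq V\setminus\Span(\{v_1,\dots,v_{m-1}\})$: taking the infimum of $\|\cdot\|$ over a smaller set can only make the minimum larger, so $\|v_{m+1}\|\geq\|v_m\|$. I would also remark here that the minima used to define the $v_m$ actually exist thanks to local finiteness, which guarantees that the set of elements of $V$ of norm at most any given bound is finite.

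Next I would argue by contradiction: assume some $w\in V$ does not lie in $\Span(\{v_i\}_{i\geq 1})$. Then for every $m$ one has $w\in V\setminus\Span(\{v_i\}_{i<m})$, so the minimality of $v_m$ inside this set forces $\|v_m\|\leq\|w\|$ for all $m$. The $v_m$ are pairwise distinct and nonzero (by the independence established above), so this produces infinitely many distinct elements of $V$ of norm bounded by $\|w\|$, contradicting local finiteness.

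The only mild subtlety, which I do not expect to be a real obstacle, is the finite-dimensional case: if $V\setminus\Span(\{v_1,\dots,v_n\})$ becomes empty for some $n$, the ordered ``sequence'' terminates at length $n$ and spanning is built into the termination criterion, so the contradiction argument above is only needed in the genuinely infinite case.
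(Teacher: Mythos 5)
Your proof is correct and takes essentially the same approach as the paper: both rely on the fact that local finiteness forces the norm sequence $(\|v_i\|)_i$ to be eventually larger than any given bound, which combined with the minimality in the definition of each $v_m$ yields the spanning property. You phrase this as a contradiction and add a few harmless but unnecessary steps (the non-decreasing observation is never used, and the finite-dimensional case is excluded by the definition of ordered basis as an honest sequence indexed by all $i\geq 1$), while the paper argues directly that the norm sequence tends to infinity.
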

\begin{proof}
    For all $m\geq1$ \[v_m\not\in \Span_{\F_q}(\{v_i\}_{i<m}),\] hence the $v_i$'s are $\F_q$\=/linearly independent. Since for all $r\in\R$ there is a finite number of elements of $V$ with norm at most $r$, the norm sequence $(\|v_i\|)_{i\geq1}$ tends to infinity; in particular, for all $v\in V$ there is an integer $m$ such that $\|v_m\|>\|v\|$, so \[v\in \Span_{\F_q}(\{v_i\}_{i<m})\] by construction of $v_m$.
\end{proof}
\begin{prop}\label{norm sequence}
    If $(v_i)_{i\geq1}$ is an ordered basis of a lattice $V\subseteq C$, and $(v'_i)_{i\geq1}$ is a sequence of elements in $V$ that are $\F_q$\=/linearly independent and with weakly increasing norm, then \[\|v'_i\|\geq\|v_i\|\text{ for all }i\geq1.\] In particular, the norm sequence of $V$ does not depend on the chosen ordered basis of $V$.
\end{prop}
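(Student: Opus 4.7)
The plan is a one-line pigeonhole argument on dimensions, together with the minimality built into the definition of an ordered basis.

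Fix $i \geq 1$, and consider the subspace $W := \Span(v_1, \dots, v_{i-1})$, which has $\F_q$-dimension $i-1$. Since $v'_1, \dots, v'_i$ are $\F_q$-linearly independent, they span an $i$-dimensional subspace, so they cannot all lie in $W$. Pick the smallest index $k \leq i$ with $v'_k \notin W$. By the defining property of $v_i$ as an element of least norm in $V \setminus W$, we get $\|v_i\| \leq \|v'_k\|$. The hypothesis that $(\|v'_j\|)_{j\geq 1}$ is weakly increasing gives $\|v'_k\| \leq \|v'_i\|$, and combining these inequalities yields $\|v_i\| \leq \|v'_i\|$, which is the first assertion.

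For the independence of the norm sequence from the ordered basis, I first observe that any ordered basis $(v_i)_{i\geq 1}$ automatically has weakly increasing norm sequence: since $V\setminus\Span(\{v_j\}_{j\leq m})\subseteq V\setminus\Span(\{v_j\}_{j<m})$, the least norm in the smaller set is at least the least norm in the larger one, giving $\|v_{m+1}\|\geq \|v_m\|$. Hence, given two ordered bases $(v_i)_{i\geq 1}$ and $(v'_i)_{i\geq 1}$, both satisfy the hypotheses of the first part of the proposition, applied in either direction. This yields $\|v_i\| \leq \|v'_i\|$ and $\|v'_i\| \leq \|v_i\|$ for all $i$, so the sequences coincide.

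I don't anticipate a genuine obstacle here: the only subtle point is choosing the index $k$ correctly so that $v'_k$ exits the span $\Span(v_1,\dots,v_{i-1})$ while still satisfying $k \leq i$, which is exactly what a dimension count guarantees. The local finiteness of $V$ plays no role in this argument beyond ensuring the minima used to define the $v_i$ exist, which is already folded into Definition \ref{def Lambda_m}.
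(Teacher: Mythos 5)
Your proof is correct and follows essentially the same route as the paper: a dimension count forces some $v'_k$ with $k\leq i$ to lie outside $\Span(v_1,\dots,v_{i-1})$, the minimality defining $v_i$ gives $\|v_i\|\leq\|v'_k\|$, and weak monotonicity gives $\|v'_k\|\leq\|v'_i\|$. The paper phrases the same argument as a proof by contradiction (assume $\|v'_m\|<\|v_m\|$, show all of $v'_1,\dots,v'_m$ land in the $(m-1)$-dimensional space $V_{m-1}$), while you argue directly; you also make explicit the small point, left implicit in the paper, that the norm sequence of an ordered basis is automatically weakly increasing, which is needed to apply the first part to two ordered bases.
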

\begin{proof}
    By contradiction, assume $\|v'_m\|<\|v_m\|$ for some $m$. Then for all $i\leq m$ we have \[\|v'_i\|\leq\|v'_m\|<\|v_m\|,\] so $v'_i\in \Span_{\F_q}(\{v_j\}_{j<m})$ by construction of $v_m$; since $\{v'_i\}_{i\leq m}$ is a set of $m$ $\F_q$\=/linearly independent vectors and $\dim_{\F_q}\Span_{\F_q}(\{v_j\}_{j<m})=m-1$, we have reached a contradiction. If we take $(v'_i)_i$ to be another ordered basis, by this reasoning we get both \[\|v'_m\|\geq\|v_m\|\text{ and }\|v_m\|\geq\|v'_m\|,\] hence the norm sequence is independent from the choice of the ordered basis.
\end{proof}

Finally, we show that the norm sequence is reasonably well behaved with regard to subspaces.

\begin{lemma}\label{subspace ordered basis}
Let $W\subseteq V\subseteq C$ be lattices. The norm sequence $(s_i)_{i\geq1}$ of $W$ is a subsequence of the norm sequence $(r_i)_{i\geq1}$ $V$. 

Moreover, if $\dim_{\F_q}{V}/{W}=n<\infty$, for $i\gg0$ we have $r_i=s_{i+n}$.
\end{lemma}
\begin{proof}
Let $(w_i)_{i\geq1}$ be an ordered basis of $W$. Let's construct an ordered basis $(v_i)_{i\geq1}$ of $V$ recursively in the following way. For all $k\geq1$ let $f(k)$ be the least integer such that \[w_{f(k)}\not\in\Span_{\F_q}(\{v_i\}_{i<k}),\] and let \[v_k'\in V\setminus\Span_{\F_q}(\{v_i\}_{i<k})\] be an element of least norm. If $\|v_k'\|<\|w_{f(k)}\|$, we set $v_k\coloneqq v_k'$, otherwise we set $v_k\coloneqq w_{f(k)}$. 

By construction $(v_k)_{k\geq1}$ is an ordered basis of $V$, so we only need to show that for all $j\geq1$ there is some $k\geq1$ such that $v_k=w_j$. By contradiction, let $j$ be the first integer such that this does not happen, and let $k$ be the greatest integer such that \[w_j\not\in\Span_{\F_q}(\{v_i\}_{i<k}),\] which exists because $(v_i)_{i\geq1}$ is a basis of $V$. This means that $w_j=\alpha v_k+v$ for some $v\in\Span_{\F_q}(\{v_i\}_{i<k})$ and some constant $\alpha\in\F_q^\times$, and since $v_k\neq w_j$, by our algorithm we must have $\|v_k\|<\|w_j\|$; as a consequence \[\|v\|=\|w_j-\alpha v_k\|=\|w_j\|>\|v_k\|,\] which is a contradiction because, since $(v_i)_{i\geq1}$ is an ordered basis, $\|v_k\|\geq\|v_i\|$ for all $i<k$, hence $\|v_k\|\geq\|v\|$.

If $\dim_{\F_q}{V}/{W}=n<\infty$, since the basis $\{v_i\}_{i\geq1}$ of $V$ extends the basis $\{w_i\}_{i\geq1}$ of $W$, there are exactly $n$ elements of the former which are not contained in the latter. Since, taking the order into account, $(w_i)_{i\geq1}$ is a subsequence of $(v_i)_{i\geq1}$, for $i\gg0$ we have $v_i=w_{i+n}$, hence $r_i=s_{i+n}$.
\end{proof}

Recall that $e$ is the degree of $\infty\in X$.

\begin{lemma}\label{norm bound}
    Let $V\subseteq\C$ be a lattice which is also a projective $A$\=/submodule of finite rank $r$, and let $(s_i)_{i\geq1}$ be its norm sequence. Then:
    \begin{itemize}
        \item for all $i\gg0$, \[s_{i+er}=q^e\cdot s_i;\]
        \item for all $k\in\Z$, for all $i\gg0$, \[\frac{s_{i+k}}{s_i}\leq q^{e\left\lceil\frac{k}{er}\right\rceil};\]
        \item for all $k\in\Z$, for infinitely many $i$, \[\frac{s_{i+k}}{s_i}\leq q^\frac{k}{r}.\]
    \end{itemize}
\end{lemma}
\begin{proof}
    We can choose $a,b\in A\setminus\{0\}$ such that $\deg(b)=\deg(a)+e$. Fix an ordered basis $(v_i)_{i\geq1}$ of $V$: obviously, $(av_i)_{i\geq1}$ and $(bv_i)_{i\geq1}$ are ordered bases respectively of $aV$ and $bV$. Since $V$ has rank $r$, \[\dim_{\F_q}{V}/{aV}=r\deg(a)\text{ and }\dim_{\F_q}{V}/{bV}=r\deg(b),\] therefore by Lemma \ref{subspace ordered basis} we have \[\|v_i\|=\|av_{i-r\deg(a)}\|=\|bv_{i-r\deg(b)}\|\text{ for }i\gg0.\]
    Rearranging the terms, we get that, for $i\gg0$:
    \[\|v_{i-r\deg(a)}\|=\|a\|^{-1}\|b\|\|v_{i-r\deg(b)}\|=q^e\|v_{i-r\deg(b)}\|.\]
    Shifting the indices we get \[\|v_i\|=q^e\|v_{i-r(\deg(b)-\deg(a))}\|=q^e\|v_{i-er}\|\text{ for }i\gg0,\] which is the first statement.

    For all $k\in\Z$, since the norm sequence is weakly increasing, we have the following inequality for $i\gg0$:
    \[\frac{s_{i+k}}{s_i}\leq\frac{s_{i+er\left\lceil\frac{k}{er}\right\rceil}}{s_i}= q^{e\left\lceil\frac{k}{er}\right\rceil}.\]
    Moreover, for all $i\gg0$:
\[\prod_{j=0}^{er-1}\frac{s_{i+k(j+1)}}{s_{i+kj}}=\frac{s_{i+ker}}{s_i}=\begin{dcases}
    \prod_{j=0}^{k-1}\frac{s_{i+e(j+1)r}}{s_{i+ejr}}=q^{ek}&\text{if }k\geq0\\
    \prod_{j=k}^{-1}\frac{s_{i+e(j+1)r}}{s_{i+ejr}}=q^{ek}&\text{if }k<0,
\end{dcases}\]
hence at least one of the factors on the left hand side has norm at most $q^{\frac{k}{r}}$; this implies that the inequality \[\frac{s_{i+k}}{s_i}\leq q^\frac{k}{r}\] holds for infinitely many values of $i$.

\end{proof}

\subsection{Estimation of the coefficients of \texorpdfstring{$g_\beta$ and $\exp_\phi$}{gβ and exp}}

The following is a combinatorial result similar to the well known "vanishing lemma" (see \cite[Lemma 8.8.1]{Goss98}).
\begin{lemma}[{\cite[Lemma 5.6]{Ferraro}}]\label{S_n,k}
    Call $S_{n,d}(x_1,\dots,x_n)\in\F_q[x_1,\dots,x_n]$ the sum of the $d$\=/th powers of all the homogeneous linear polynomials. Assume that the coefficient of monomial $x_1^{d_1}\cdots x_n^{d_n}$ in the expansion of $S_{n,d}(x_1,\dots,x_n)$ is nonzero: then, for all $1\leq j\leq n$, \[\sum_{i=1}^j d_i\geq q^j-1.\] In particular, if $d<q^n-1$, $S_{n,d}=0$.
\end{lemma}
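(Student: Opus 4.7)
The plan is to compute the coefficient of $x_1^{d_1}\cdots x_n^{d_n}$ in $S_{n,d}$ directly from the definition. Expanding each $(a_1x_1+\cdots+a_nx_n)^d$ by the multinomial theorem and summing over $(a_1,\ldots,a_n)\in\F_q^n$, this coefficient is
\[\binom{d}{d_1,\ldots,d_n}\prod_{i=1}^n\sum_{a\in\F_q}a^{d_i}.\]
The standard identity $\sum_{a\in\F_q}a^m=-1$ when $m>0$ and $(q-1)\mid m$, and $0$ otherwise, shows that non-vanishing of this coefficient requires each $d_i$ to be a strictly positive multiple of $q-1$, and also requires the multinomial coefficient $\binom{d}{d_1,\ldots,d_n}$ to be nonzero modulo $p$.

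By Lucas' theorem applied in base $p$, the latter condition is equivalent to the absence of carries in the base-$p$ addition $d_1+\cdots+d_n$. Writing $q=p^e$ and grouping the base-$p$ digits of each $d_i$ into consecutive blocks of length $e$ yields the base-$q$ expansion $d_i=\sum_j c_{i,j}\,q^j$ with $0\leq c_{i,j}<q$. A direct estimate shows that if at every base-$p$ position the digits of the $d_i$'s sum to at most $p-1$, then at each base-$q$ position $\sum_i c_{i,j}\leq (p-1)(1+p+\cdots+p^{e-1})=q-1$. In particular the addition has no carries in base $q$ either, so the base-$q$ digit-sum function $F$ is additive on any sub-family of the $d_i$'s.

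To conclude, each $d_i$ is a positive multiple of $q-1$, whence $F(d_i)\equiv d_i\equiv 0\pmod{q-1}$ and $F(d_i)\geq q-1$. By additivity, $F\bigl(\sum_{i\leq j}d_i\bigr)\geq j(q-1)$ for each $1\leq j\leq n$. Since an integer whose base-$q$ digits are all at most $q-1$ and whose digit-sum is at least $j(q-1)$ is minimised by filling the $j$ lowest positions with $q-1$, we deduce $\sum_{i\leq j}d_i\geq(q-1)(1+q+\cdots+q^{j-1})=q^j-1$. The case $j=n$ gives $d\geq q^n-1$, which yields the ``in particular'' statement. The subtle step is the base-$p$ to base-$q$ passage, but the exact numeric identity $(p-1)(1+p+\cdots+p^{e-1})=q-1$ makes the transfer of the no-carries condition painless, after which the bound is an elementary digit-sum estimate.
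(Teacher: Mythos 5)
Your proof is correct and follows essentially the same approach as the paper: expand via the multinomial theorem, use the vanishing of power sums over $\F_q$ to force each $d_i$ to be a positive multiple of $q-1$, and use a carry-free addition argument to get additivity of base-$q$ digit sums. The one place you go beyond the paper is making the base-$p$ to base-$q$ passage explicit — the paper directly asserts that a nonzero multinomial coefficient implies additivity of the base-$q$ digit sum, leaving the reader to supply the transfer of the Lucas/Kummer no-carries condition from base $p$ to base $q$, which you spell out via the identity $(p-1)(1+p+\cdots+p^{e-1})=q-1$.
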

    

\begin{Def}
    For a lattice $V\subseteq \C$, for all integers $i\geq0$ we define:
    \[e_{V,i}\coloneqq\sum_{\substack{I\subseteq V\setminus\{0\}\\|I|=q^i-1}}\prod_{v\in I}v^{-1}\]
    (by convention, $e_{V,0}=1$).
\end{Def}
\begin{oss}\label{remark e_V}
    For all $c\in\C$, since $V\subseteq\C$ is a lattice, the infinite product $c\prod_{v\in V}\left(1-\frac{c}{v}\right)$ converges, and is equal to $\sum_{n\geq0} e_{V,n}c^{q^n}$. In particular, \[\sum_{n\geq0}e_{V,n}x^{q^n}\in\C[\![x]\!]\] is the only power series with infinite radius of convergence and with leading coefficient $1$ such that its zeroes are simple and coincide with $V$.
\end{oss}

\begin{lemma}\label{coefficient bounds}
    Fix a lattice $V\subseteq\C$, with norm sequence $(r_i)_{i\geq1}$. Fix an ordered basis $(v_i)_{i\geq1}$ and call $V_m\coloneqq\Span_{\F_q}(\{v_i\}_{i\leq m})$ for all $m\geq0$. We have:
    \begin{itemize}
        \item for all $k\geq0$:
        \[\|e_{V,k}\|\leq\prod_{i=1}^k r_i^{q^{i-1}-q^i};\]
        \item for all $m>0$, for all $k>0$:
        \[\left\|\sum_{v\in V_m}v^{q^k-1}\right\|\begin{dcases}=0&\text{if }k<m\\
        \leq r_m^{q^k-q^m}\prod_{i=1}^m r_i^{q^i-q^{i-1}}&\text{if }k\geq m.\end{dcases}\]
    \end{itemize}
\end{lemma}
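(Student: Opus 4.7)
The plan is to reduce both bounds to an ultrametric estimate, bounding each sum by the largest term, and then identify the extremal term by an elementary combinatorial/rearrangement argument.

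For the first inequality, write $e_{V,k}=\sum_{I}\prod_{v\in I}v^{-1}$ with $I$ ranging over the $(q^k-1)$-subsets of $V\setminus\{0\}$. By the ultrametric inequality
\[\|e_{V,k}\|\leq\max_{I}\prod_{v\in I}\|v\|^{-1}=\Bigl(\min_{I}\prod_{v\in I}\|v\|\Bigr)^{-1}.\]
I then want to show that this minimum equals $\prod_{i=1}^k r_i^{q^i-q^{i-1}}$, realized by $I=V_k\setminus\{0\}$. The key step is that every $v\in V_i\setminus V_{i-1}$ has $\|v\|=r_i$ exactly: indeed $v\notin V_{i-1}$ gives $\|v\|\geq r_i$ by definition of the ordered basis, while writing $v=c_iv_i+w$ with $c_i\in\F_q^\times$ and $w\in V_{i-1}$ together with $r_{i-1}\leq r_i$ gives $\|v\|\leq\max(\|c_iv_i\|,\|w\|)\leq r_i$. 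Since $|V_i\setminus V_{i-1}|=q^i-q^{i-1}$, the set $V_k\setminus\{0\}$ consists exactly of the $q^k-1$ smallest-norm nonzero elements of $V$, and its product of norms is $\prod_{i=1}^k r_i^{q^i-q^{i-1}}$; any other $(q^k-1)$-subset has product of norms at least as large.

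For the second inequality, observe that $\sum_{v\in V_m}v^{q^k-1}$ is the evaluation at $(v_1,\dots,v_m)$ of the polynomial $S_{m,q^k-1}(x_1,\dots,x_m)$ of Lemma \ref{S_n,k}. When $k<m$ we have $q^k-1<q^m-1$, so $S_{m,q^k-1}$ vanishes identically as a polynomial and the sum is $0$. When $k\geq m$, Lemma \ref{S_n,k} tells me that every nonzero monomial $x_1^{d_1}\cdots x_m^{d_m}$ appearing in $S_{m,q^k-1}$ satisfies
\[\sum_{i=1}^j d_i\geq q^j-1\quad(1\leq j\leq m),\qquad\sum_{i=1}^m d_i=q^k-1.\]
The ultrametric inequality reduces the claim to showing that for every such tuple,
\[r_1^{d_1}\cdots r_m^{d_m}\leq r_1^{q-1}r_2^{q^2-q}\cdots r_{m-1}^{q^{m-1}-q^{m-2}}r_m^{q^k-q^{m-1}},\]
which is exactly the claimed bound after rewriting $r_m^{q^k-q^m}\prod_{i=1}^m r_i^{q^i-q^{i-1}}$.

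This last inequality I prove by Abel summation. Set $\ell_i:=\log r_i$ (a weakly increasing sequence), $D_j:=\sum_{i\leq j}d_i$ and $E_j:=q^j-1$ for $j<m$, $E_m:=q^k-1$. The constraints are $D_j\geq E_j$ for every $j$ and $D_m=E_m$. Summation by parts gives
\[\sum_{i=1}^m d_i\ell_i-\sum_{i=1}^m(E_i-E_{i-1})\ell_i=\sum_{j=1}^{m-1}(E_j-D_j)(\ell_{j+1}-\ell_j)\leq 0,\]
which is exactly the desired inequality in additive form. The only delicate point — and what I consider the main obstacle — is the careful bookkeeping of $E_j$ at the upper boundary $j=m$, where the sequence $E_j$ jumps from $q^{m-1}-1$ to $q^k-1$; once that is in place, the monotonicity of $(r_i)$ and the ultrametric inequality do the rest.
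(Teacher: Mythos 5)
Your proof is correct and follows essentially the same route as the paper's: both parts are handled by the ultrametric inequality, part two is reduced to the combinatorial constraint from Lemma \ref{S_n,k}, and the task becomes identifying the extremal term. The only difference is that the paper dismisses the optimization in part two with ``it's easy to see,'' whereas you supply the actual argument via Abel summation; your identity $\sum d_i\ell_i - \sum(E_i-E_{i-1})\ell_i = \sum_{j<m}(E_j-D_j)(\ell_{j+1}-\ell_j)$ checks out (using $D_0=E_0=0$ and $D_m=E_m$), and the sign analysis is correct since $(r_i)$ is weakly increasing and $D_j\geq E_j$. Your observation that every $v\in V_i\setminus V_{i-1}$ has $\|v\|=r_i$ exactly is also the right way to justify the extremality of $V_k\setminus\{0\}$ in part one, which the paper likewise leaves implicit.
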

\begin{proof}
    For the first part, if $k=0$ then $e_{V,k}=1$, so there is nothing to prove. If $k>0$, we have:
    \[\|e_{V,k}\|=\left\|\sum_{\substack{I\subseteq V\setminus\{0\}\\|I|=q^k-1}}\prod_{v\in I}v^{-1}\right\|\leq\max_{\substack{I\subseteq V\setminus\{0\}\\|I|=q^k-1}}\left\|\prod_{v\in I}v^{-1}\right\|=\left\|\prod_{v\in V_k}v^{-1}\right\|=\prod_{i=1}^k r_i^{q^{i-1}-q^i}.\]
    For the second part, note that the element whose norm we are trying to estimate is equal to $S_{m,q^k-1}(v_1,\dots,v_m)$, in the notation of Lemma \ref{S_n,k}. By that lemma, if $k<m$, the element is zero, otherwise we have the following inequality:
    \[\|S_{m,q^k-1}(v_1,\dots,v_m)\|\leq\max_{\substack{d_1,\dots,d_m\\d_1+\cdots+d_m=q^k-1\\ \forall j\;d_1+\cdots+d_j\geq q^j-1}}\left\|v_1^{d_1}\cdots v_m^{d_m}\right\|.\]
    It's easy to see that the maximum norm of the product $v_1^{d_1}\cdots v_m^{d_m}$ under the specified conditions is obtained when we set $d_i=q^i-q^{i-1}$ for $i<m$ and $d_m=q^k-q^{m-1}$, therefore we get the desired inequality.
\end{proof}

\begin{oss}
    Since $ \Lambda_\phi\subseteq\K \Lambda_\phi$ is discrete and $\K \Lambda_\phi\cong K_\infty^r$ is locally compact, $ \Lambda_\phi$ is a lattice of $\C$. Moreover, $e_{ \Lambda_\phi,n}$ is exactly the coefficient of $\tau^n$ of the exponential function $\exp_\phi\in\C[\![\tau]\!]$.
\end{oss}

Recall the definition of $g_\beta$ given at the start of the section for all $\beta\in\ker(\exp_\phi^*)$.

\begin{lemma}\label{V_beta}
    For all $\beta\in\ker(\exp_\phi^*)\setminus\{0\}$, $\ker(g_\beta)$ is an $\F_q$\=/vector subspace of $ \Lambda_\phi$ of codimension $1$. In particular, $g_\beta=\beta\sum_{n\geq0}e_{\ker(g_\beta),n}\tau^n$.
\end{lemma}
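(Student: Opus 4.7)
The plan is to split the statement into two pieces. First, I want to show that $g_\beta$, a priori only a formal power series in $\tau$, actually defines an entire $\F_q$-linear function on $\C$ whose zero set is contained in $\Lambda$ and forms a codimension-$1$ $\F_q$-subspace $V$. Once this is done, applying Remark \ref{remark e_V} to the entire function $g_\beta/\beta$ (whose zero set is $V$, with simple zeros and leading coefficient $1$) will immediately give $g_\beta = \beta\sum_{n\geq0}e_{V,n}\tau^n$.

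The key preliminary step, and the one I expect to require the hypothesis $\beta\in\ker(\exp_\Lambda^*)$, is entirety. Writing $g_\beta=\sum_{n\geq0}c_n\tau^n$, the relation $(1-\tau)\circ g_\beta=\beta\exp_\Lambda$ unfolds into the recursion $c_0=\beta$, $c_n=\beta e_{\Lambda,n}+c_{n-1}^q$, which iterates to $c_n=\sum_{k=0}^n\beta^{q^k}e_{\Lambda,n-k}^{q^k}$. Raising to the $q^{-n}$ power and substituting $j=n-k$ yields
\[c_n^{q^{-n}}=\sum_{j=0}^{n}\beta^{q^{-j}}e_{\Lambda,j}^{q^{-j}},\]
which is the $n$-th partial sum of the series $\exp_\Lambda^*(\beta)=\sum_{j\geq0}e_{\Lambda,j}^{q^{-j}}\beta^{q^{-j}}$. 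Since $\beta\in\ker(\exp_\Lambda^*)$, this series converges to $0$, so $|c_n|^{1/q^n}=|c_n^{q^{-n}}|\to0$ and $g_\beta$ is entire. I expect this reindexing to be the main point that needs care.

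Entirety in hand, the remaining steps are quick. If $g_\beta(z)=0$, then $\beta\exp_\Lambda(z)=g_\beta(z)-g_\beta(z)^q=0$, and since $\beta\neq0$ we get $z\in\Lambda$, so $\ker(g_\beta)\subseteq\Lambda$. The derivative $g_\beta'(x)=c_0=\beta$ is a nonzero constant because every $x^{q^n}$ with $n\geq1$ has vanishing derivative in characteristic $p$; hence every zero of $g_\beta$ is simple. Finally, Poonen's Theorem \ref{Poonen} tells us that $\beta\mapsto g_\beta$ is an isomorphism onto $\widehat{\Lambda}$, so $g_\beta|_\Lambda\colon\Lambda\to\F_q$ is a nonzero continuous $\F_q$-linear form, and its kernel $V:=\ker(g_\beta)$ is an $\F_q$-subspace of $\Lambda$ of codimension $1$. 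Being a subspace of the discrete $\Lambda\subseteq\C$, $V$ is locally finite, so Remark \ref{remark e_V} applies to $g_\beta/\beta$ and gives $g_\beta=\beta\sum_{n\geq0}e_{V,n}\tau^n$, as desired.
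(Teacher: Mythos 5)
Your proof is correct and follows essentially the same route as the paper's: deduce $\ker(g_\beta)\subseteq\Lambda$ from $(1-\tau)\circ g_\beta=\beta\exp_\Lambda$, invoke Poonen's theorem to see that $g_\beta|_\Lambda\in\widehat\Lambda$ is nonzero so the kernel has codimension exactly $1$, check simplicity of zeros, and apply Remark \ref{remark e_V}. (You also correctly read $\ker(\exp_\Lambda^*)$ where the statement has the typo $\ker(\exp_\Lambda)$.) You are, however, more careful on two points where the paper is terse. First, you prove entirety of $g_\beta$ directly: the identity $c_n^{q^{-n}}=\sum_{j=0}^{n}e_{\Lambda,j}^{q^{-j}}\beta^{q^{-j}}$ exhibits $\|c_n\|^{1/q^n}$ as the norm of a partial sum of $\exp_\Lambda^*(\beta)=0$, so it tends to $0$; this makes transparent exactly where the hypothesis $\beta\in\ker(\exp_\Lambda^*)$ enters. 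The paper leaves entirety implicit — it is in fact already packaged into Poonen's theorem, since convergence of $g_\beta$ on the unbounded set $\Lambda$ forces convergence everywhere — but your self-contained computation is a genuine clarification. Second, for simplicity of zeros your argument (the derivative of the $\F_q$-linear series $g_\beta$ is its constant term $c_0=\beta\neq0$) is cleaner and more explicit than the paper's one-line appeal to the simplicity of the zeros of $\exp_\Lambda$.
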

\begin{proof}
    If $c\in \ker(g_\beta)$ then \[\exp_\phi(c)=\beta^{-1}(1-\tau)(g_\beta(c))=0,\] hence $c\in \Lambda_\phi$. Moreover, $g_\beta|_{\Lambda_\phi}$ is an $\F_q$\=/linear function with image in $\F_q$, hence its kernel $V_\beta$ has codimension at most $1$ in $\Lambda_\phi$. It is exactly $1$ because $g_\beta|_{\Lambda_\phi}$ is not identically zero by Theorem \ref{Poonen96}.

    From the identity \[(1-\tau)\circ g_\beta=\beta\exp_\phi,\] since the zeroes of $\exp_\phi$ are simple, we deduce the same for the zeroes of $g_\beta$, therefore \[g_\beta=c_\beta \sum_{n\geq0}e_{\ker(g_\beta),n}\tau^n\] for some constant $c_\beta\in\C$ by Remark \ref{remark e_V}. Finally, from the same identity we deduce that the coefficient of $\tau$ in the expansion of $g_\beta$ is $\beta$, hence $c_\beta=\beta$.
\end{proof}

\subsection{Proof of the identities}
We can now prove the main propositions of this section.

\begin{prop}\label{identity 1}
    For all $\beta\in\ker(\exp_\phi^*)$, the following identity holds in $\C$:
    \[\beta=-\sum_{\lambda\in\Lambda_\phi\setminus\{0\}}\frac{g_\beta(\lambda)}{\lambda}.\]
\end{prop}

\begin{proof}

The series converges for all $\beta\in\ker(\exp_\phi^*)$ because the denominators belong to the lattice $\Lambda_\phi$ and the numerators to $\F_q$. For $\beta=0$ the identity is obvious, hence we can assume $\beta\neq0$.
Fix an ordered basis $(\lambda_i)_{i\geq1}$ of $\Lambda_\phi$ and define \[\Lambda_m\coloneqq\Span_{\F_q}(\{\lambda_i\}_{i\leq m})\text{ for all }m\geq0.\] By Lemma \ref{V_beta}, $\ker(g_\beta)\subseteq\Lambda_\phi$ has codimension $1$, hence by Lemma \ref{subspace ordered basis}, if we denote by $(r_i)_{i\geq1}$ and $(s_i)_{i\geq1}$ the norm sequences respectively of $\Lambda_\phi$ and $\ker(g_\beta)$, there is a positive integer $N$ such that
\[s_i=\begin{dcases}r_i&\text{if }i<N\\r_{i+1}&\text{if }i\geq N.\end{dcases}\]
For all $m\geq N$, we define:
    \[S_m\coloneqq\beta+\sum_{\lambda\in\Lambda_m\setminus\{0\}}\frac{g_\beta(\lambda)}{\lambda}=\beta\sum_{k\geq1}e_{\ker(g_\beta),k}\sum_{\lambda\in\Lambda_m}\lambda^{q^k-1}.\]
    By Lemma \ref{coefficient bounds}, we have:
    \begin{align*}
        \|\beta^{-1}S_m\|=&\left\|\sum_{k\geq1}e_{\ker(g_\beta),k}\sum_{\lambda\in\Lambda_m}\lambda^{q^k-1}\right\|\leq\max_{k\geq m}\left\{\|e_{\ker(g_\beta),k}\|\left\|\sum_{\lambda\in\Lambda_m}\lambda^{q^k-1}\right\|\right\}\\
        \leq&\max_{k\geq m}\left\{\left(\prod_{i=1}^k s_i^{q^{i-1}-q^i}\right)\left(r_m^{q^k-q^m}\prod_{i=1}^m r_i^{q^i-q^{i-1}}\right)\right\}\\
        =&\max_{k\geq m}\left\{\left(\prod_{i=N}^k r_{i+1}^{q^{i-1}-q^i}\right)\left(r_m^{q^k-q^m}\prod_{i=N}^m r_i^{q^i-q^{i-1}}\right)\right\}\\
        =&\max_{k\geq m}\left\{\left(\prod_{i=N}^{m} \left(\frac{r_i}{r_{i+1}}\right)^{q^{i}-q^{i-1}}\right)\left(\prod_{i=m+1}^k\left(\frac{r_m}{r_i}\right)^{q^i-q^{i-1}}\right)\right\}\\
        =&\prod_{i=N}^{m} \left(\frac{r_i}{r_{i+1}}\right)^{q^{i}-q^{i-1}}\\
        =&\left(\frac{r_N}{r_{m+1}}\right)^{q^N-q^{N-1}}\prod_{i=N+1}^{m}\left(\frac{r_i}{r_{m+1}}\right)^{q^i-2q^{i-1}+q^{i-2}}\\
        \leq&\left(\frac{r_N}{r_{m+1}}\right)^{q^N-q^{N-1}}.
    \end{align*}
Since this number tends to zero as $m$ tends to infinity, we have the following identity in $\C$:
    \[0=\lim_m S_m=\lim_m\left(\beta+\sum_{\lambda\in\Lambda_m\setminus\{0\}}\frac{g_\beta(\lambda)}{\lambda}\right)=\beta+\sum_{\lambda\in\Lambda_\phi\setminus\{0\}}\frac{g_\beta(\lambda)}{\lambda}.\tag*{\qedhere}\]
\end{proof}

Before proving the remaining Proposition, let's recall the following well-known result about the coefficients of the logarithm $\log_\Lambda=\sum_i l_i\tau^i$ (see for example the proof of \cite[Lemma 7.1.8]{Ferraro}, which holds for arbitrary rank).

\begin{lemma}\label{lemma log coeff}
    For all $i\geq1$, $l_i=-\sum_{\lambda\in\Lambda\setminus\{0\}}\lambda^{1-q^i}$.
\end{lemma}

\begin{prop}\label{identity 2}
    For all integers $k$, for all $c\in\K\setminus\{0\}$ with $\|c\|\leq q^{\frac{k-1}{r}}$, the following identity holds in $\C$:
    \[\sum_{\lambda\in\Lambda_\phi\setminus\{0\}}\frac{\exp_\phi(c\lambda)}{\lambda^{q^k}}=-\sum_{j=0}^k e_jl_{k-j}^{q^j}c^{q^j},\]
    where by convention the summation on the right hand side is $0$ if $k<0$.
\end{prop}

\begin{proof}
First of all, let's show that series on the left hand side converges. Since $\exp_\phi(\K\Lambda_\phi)$ is homeomorphic to the compact space $\faktor{\K\Lambda_\phi}{\Lambda_\phi}$, the numerators $\exp_\phi(c\lambda)$ are bounded in norm by some positive real constant. In particular, since $\Lambda_\phi\subseteq\C$ is a lattice, for any positive real number $\varepsilon$ there are finitely many $\lambda\in\Lambda_\phi$ such that $\left\|\frac{\exp_\phi(c\lambda)}{\lambda^{q^k}}\right\|>\varepsilon$, so the series converges.

Fix an ordered basis $(\lambda_i)_{i\geq1}$ of $\Lambda_\phi$, set $r_i\coloneqq\|\lambda_i\|$, and define \[\Lambda_m\coloneqq\Span_{\F_q}(\{\lambda_i\}_{i\leq m})\text{ for all }m\geq1;\] define:
    \[S_m\coloneqq\sum_{\lambda\in\Lambda_m\setminus\{0\}}\frac{\exp_\phi(c\lambda)}{\lambda^{q^k}}-\sum_{\substack{0\leq j\leq k\\\lambda\in\Lambda_m\setminus\{0\}}}e_jc^{q^j}\lambda^{q^j-q^k}=\sum_{j\geq1}e_{k+j}c^{q^{k+j}}\left(\sum_{\lambda\in\Lambda_m}\lambda^{q^j-1}\right)^{q^k},\]
    where by convention we set $e_j=0$ for all $j<0$.
By Lemma \ref{coefficient bounds}, for all $m\gg0$ we have:
    \begin{align*}
        \|S_m\|=&\left\|\sum_{j\geq1}e_{k+j}c^{q^{k+j}}\left(\sum_{\lambda\in\Lambda_m}\lambda^{q^j-1}\right)^{q^k}\right\|\leq\max_{j\geq m}\left\{\|e_{k+j}\|\|c\|^{q^{k+j}}\left\|\sum_{\lambda\in\Lambda_m}\lambda^{q^j-1}\right\|^{q^k}\right\}\\
        \leq&\max_{j\geq m}\left\{\|c\|^{q^{k+j}}\left(\prod_{i=1}^{k+j} r_i^{q^{i-1}-q^i}\right)\left(r_m^{q^j-q^m}\prod_{i=1}^m r_i^{q^i-q^{i-1}}\right)^{q^k}\right\}\\
        \leq&\max_{j\geq m}\left\{\|c\|^{q^k}\left(\prod_{i=1-k}^{j} r_{i+k}^{q^{i+k-1}-q^{i+k}}\right)\left(\prod_{i=1}^j (\|c\|r_i)^{q^{i+k}-q^{i+k-1}}\right)\right\}\\
        =& C_k\cdot\max_{j\geq m}\left\{\prod_{i=m+1}^{j}\left(\frac{\|c\|r_i}{r_{i+k}}\right)^{q^{i+k}-q^{i+k-1}}\right\}\\
        &\Longrightarrow\limsup_m\|S_m\|\leq C_k\cdot\limsup_j \prod_{i=m+1}^j \left(\frac{\|c\|r_i}{r_{i+k}}\right)^{q^{i+k}-q^{i+k-1}},
    \end{align*}
where $C_k$ is a nonzero constant which depends on $k$.
Since the norms of nonzero elements of $\K$ are integer powers of $q^e$, we actually have the inequality \[\|c\|\leq q^{e\left\lfloor\frac{k-1}{er}\right\rfloor}.\] By Lemma \ref{norm bound}, we have:
\begin{align*}
    \|c\|\frac{r_i}{r_{i+k}}&\leq q^{e\left\lfloor\frac{k-1}{er}\right\rfloor}\cdot q^{e\left\lceil-\frac{k}{er}\right\rceil}=q^{e\left\lfloor\frac{k-1}{er}\right\rfloor}\cdot q^{-e\left\lfloor\frac{k}{er}\right\rfloor}\leq 1\text{ for all $i$ large enough;}\\
    \|c\|\frac{r_i}{r_{i+k}}&\leq q^{e\left\lfloor\frac{k-1}{er}\right\rfloor}\cdot q^{-\frac{k}{r}}\leq q^{\frac{k-1}{r}}\cdot q^{-\frac{k}{r}}=q^{-\frac{1}{r}}<1\text{ for infinitely many values of $i$}.
\end{align*}
The first inequality implies that the limit superior on the right hand side is finite, the second inequality that it is zero.
We deduce that the sequence $\|S_m\|$ converges to $0$. If $k<0$, we get the following identity in $\C$:
\[\sum_{\lambda\in\Lambda_\phi\setminus\{0\}}\frac{\exp_\phi(c\lambda)}{\lambda^{q^k}}=\lim_m S_m=0.\]
If instead $k\geq0$, we get the following identity in $\C$:
\begin{align*}
    \sum_{\lambda\in\Lambda_\phi\setminus\{0\}}\frac{\exp_\phi(c\lambda)}{\lambda^{q^k}}&=\lim_m\left(S_m+\sum_{j=0}^k e_jc^{q^j}\sum_{\lambda\in\Lambda_m\setminus\{0\}}\lambda^{q^j-q^k}\right)\\
    &=\sum_{j=0}^{k-1} \left(e_jc^{q^j}\sum_{\lambda\in\Lambda_\phi\setminus\{0\}}\lambda^{q^j-q^k}\right)-e_kc^{q^k}\\
    &=-\sum_{j=0}^k e_jl_{k-j}^{q^j}c^{q^j}
\end{align*}
where the last equality follows from Lemma \ref{lemma log coeff}.
\end{proof}

\section{Universal dual Anderson eigenvector}\label{section zeta functions}

\subsection{Functor of dual Anderson eigenvectors}

In \cite{Ferraro}, the author proved that, if $\phi$ is a normalized Drinfeld module of rank $1$ and $\Lambda_\phi=\tilde{\pi}I$ for some $\tilde{\pi}\in\C$ and some ideal $I\subseteq A$, the following result holds (see \cite[Prop. 7.8, Prop. 7.21]{Ferraro}).
\begin{prop}\label{prop ferraro}
    Let $\zeta_I\coloneqq\sum_{a\in I\setminus\{0\}}a^{-1}\otimes a\in\C\hat\otimes A$. For all $a\in A\setminus\{0\}$:
    \[\phi^*_a\left((\tilde{\pi}^{-1}\otimes1)\zeta_I\right)=(\tilde{\pi}^{-1}\otimes a)\zeta_I.\]
\end{prop}

In other words, $(\tilde{\pi}^{-1}\otimes1)\zeta_I$ is a "dual" special function: it satisfies a functional identity analogous  to that of the special functions introduced by Anglès, Ngo Dac, and Tavares Ribeiro, with the Drinfeld module replaced by its adjoint.

Let $(\G_a,\phi)$ be a Drinfeld $A$\=/module. Let's endow the topological $\F_q$\=/vector space $\C$ with the following $A$\=/module structure: $a\in A$ sends $c\in\C$ to $\phi_a^*(c)$; to avoid confusion, we denote this topological $A$\=/module by $\mathbb{C}_\infty^{\phi^*}$.

\begin{Def}\label{Def dual Anderson eigenvectors}
Let $(\G_a,\phi)$ be a Drinfeld $A$\=/module. For any discrete $A$\=/module $M$, its set of \textit{dual Anderson eigenvectors} is defined as the $A$\=/module of continuous $A$\=/linear homomorphisms \[\Hom_A^\mathrm{cont}(\hat{M},\mathbb{C}_\infty^{\phi^*})\subseteq \mathbb{C}_\infty^{\phi^*}\hat\otimes M.\]
We denote by $\Sf_{\phi^*}:A\mbox{-}\Mod\to A\mbox{-}\Mod$ the functor that extends this map in the straightforward way.
\end{Def}

\begin{oss}\label{oss d.a.e.}
    We can write as follows the property of being a dual Anderson eigenvector $\zeta=\sum_i z_i\otimes m_i\in \mathbb{C}_\infty^{\phi^*}\hat\otimes M$. For all $a\in A$:
\begin{align*}
    \sum_i z_i\otimes am_i&=\left(\sum_j(\phi_a)_j^*\otimes 1\right)\left(\sum_i z_i\otimes m_i\right)\\&=\sum_{i,j}(\phi_a)_j^* z_i\otimes m_i\\&=\sum_{i,j}(\tau^{-j}\circ z_i\circ(\phi_a)_j)\otimes m_i.
\end{align*}
\end{oss}


\begin{teo}\label{zeta function functor}
    Let $(\G_a,\phi)$ be a Drinfeld module of rank $r$.
    The functor $\Sf_{\phi^*}$ is naturally isomorphic to $\Hom_A(\Lambda_\phi,\_)$; moreover, the universal object in $\mathbb{C}_\infty^{\phi^*}\hat\otimes\Lambda_\phi$ corresponds to the map $\hat\Lambda_\phi\cong\ker\exp^*_\phi\subseteq\mathbb{C}_\infty^{\phi^*}$ and can be expressed as \[\zeta_\phi\coloneqq-\sum_{\lambda\in\Lambda_\phi\setminus\{0\}}\lambda^{-1}\otimes\lambda.\]
\end{teo}

\begin{proof}
    Endow $\C$ with the $A$-module structure induced by the inclusion $A\subseteq\C$. The map $\exp_\phi^*:\mathbb{C}_\infty^{\phi^*}\to\C$ is a continuous $A$\=/linear morphism; for any $A$\=/module $M$, it induces a morphism $\Sf_{\phi^*}(M)\to\Hom^\mathrm{cont}_A(\hat{M},\C)$. Fix some $\zeta\in \Sf_{\phi^*}(M)$, with image $\overline{\zeta}$: since $\hat{M}$ is compact, $\overline{\zeta}(\hat{M})$ must be a compact $A$\=/submodule of $\C$, but for any $c\in\C\setminus\{0\}$ the set $A\cdot c$ is unbounded, hence $\overline{\zeta}\equiv0$. We deduce that the image of $\zeta:\hat{M}\to\mathbb{C}_\infty^{\phi^*}$ must be contained in $\ker\exp_\phi^*$, which by Theorem \ref{Poonen96} is isomorphic as a topological $A$\=/module to $\hat\Lambda_\phi$; we have the following natural isomorphisms:
    \[\Sf_{\phi^*}(M)=\Hom_A^\mathrm{cont}(\hat{M},\ker\exp_\phi^*)\cong\Hom_A^\mathrm{cont}\left(\widehat{\ker\exp_\phi^*},M\right)\cong\Hom_A(\Lambda_\phi,M),\]
    where we used Lemma \ref{duality tensor hom 1} for the second isomorphism.

    The universal object $\zeta_\phi\in\mathbb{C}_\infty^{\phi^*}\hat\otimes\Lambda_\phi$ is given by the natural morphism \[\psi:\hat\Lambda_\phi\cong\ker{\exp_\phi^*}\subseteq\mathbb{C}_\infty^{\phi^*}\] of Theorem \ref{Poonen96}, which by Proposition \ref{identity 1} sends $g\in\hat\Lambda_\phi$ to \[-\sum_{\lambda\in\Lambda_\phi\setminus\{0\}}\frac{g(\lambda)}{\lambda}.\]

    If we fix an $\F_q$\=/basis $(\lambda_i)_i$ of $\Lambda_\phi$, with $(\lambda_i^*)_i$ dual basis of $\hat\Lambda_\phi$, by Proposition \ref{duality tensor hom 2} we can write $\zeta_\phi=\sum_i\psi(\lambda_i^*)\otimes\lambda_i$, hence:
    \begin{align*}
        \zeta_\phi
    &=\sum_i\left(-\sum_{\lambda\in\Lambda_\phi\setminus\{0\}}\frac{\lambda_i^*(\lambda)}{\lambda}\right)\otimes\lambda_i\\&=-\sum_{\lambda\in\Lambda_\phi\setminus\{0\},i}\lambda^{-1}\otimes \lambda_i^*(\lambda)\lambda_i\\&=-\sum_{\lambda\in\Lambda_\phi\setminus\{0\}}\lambda^{-1}\otimes\lambda.
    \end{align*}
\end{proof}

\begin{Def}\label{def main result}
    We define the \emph{universal dual Anderson eigenvector} $\zeta_\phi\in\C\hat\otimes\Lambda_\phi$ as the universal object of the functor $\Sf_{\phi^*}$.
\end{Def}

\begin{cor}
   For all discrete $A$\=/modules $M$, $\Sf_{\phi^*}(M)$, as an $A\otimes A$\=/module, is isomorphic to $\Hom_A(\Lambda_\phi,M)$. In particular, for any $M$ we have the following equality between subsets of $\C\hat\otimes M$:
   \[\Sf_{\phi^*}(M)=\left\{\sum_{\lambda\in\Lambda_\phi\setminus\{0\}}\lambda^{-1}\otimes l(\lambda)\biggm\vert l\in\Hom_A(\Lambda_\phi,M)\right\}.\]
\end{cor}

\begin{oss}\label{oss universal dual special function}
    Fix an $\F_q$\=/basis $(\lambda_i)_i$ of the discrete $A$\=/module $\Lambda_\phi$, with $(\lambda_i^*)_i$ dual basis of $\hat\Lambda_\phi$. By Proposition \ref{duality tensor hom 2} we can express the universal object in the following alternative way as an element of $\mathbb{C}_\infty^{\phi^*}\hat\otimes\Lambda_\phi$:
    \[\zeta_\phi=\sum_i\psi(\lambda_i^*)\otimes\lambda_i,\]
    where $\psi$ denotes Poonen's isomorphism $\hat\Lambda_\phi\cong\ker(\exp_\phi^*)\subseteq\mathbb{C}_\infty^{\phi^*}$.
\end{oss}

\subsection{A convergence result for the universal Anderson eigenvector}

Let's fix a Drinfeld module $(\G_a,\phi)$ of rank $1$ and an ordered basis $(\lambda_i)_{i\geq1}$ of $\Lambda_\phi$. By Proposition \ref{duality tensor hom 2}, there is a unique sequence $(z_i)_i$ in $\C$, converging to $0$, such that we can write $\zeta_\phi=\sum_i z_i\otimes\lambda_i\in\C\hat\otimes\Lambda_\phi$. Under the assumption $\infty\in X(\F_q)$, Chung, Ngo Dac, and Pellarin proved that, for any nonnegative integer $k$, $\sum_i z_i^{q^{k}}\lambda_i$ converges to the $k$\=/th coefficient of the logarithm, while for any negative integer $k$ it converges to $0$ (\cite{CNDP21}).

We aim to generalize this result to a Drinfeld module of arbitrary rank---without any assumption on $\infty$---by exploiting the defining property of the universal Anderson eigenvector.

\begin{prop}\label{prop analyticity of zeta}
    Let $(\G_a,\phi)$ be a Drinfeld module of rank $r$, fix an $\F_q$\=/linear basis $(\lambda_i)_{i\geq1}$ of $\Lambda_\phi$, and write $\zeta_\phi=\sum_i z_i\otimes\lambda_i\in\C\hat\otimes\Lambda_\phi$. Then, for all integers $k$ the series $\sum_i z_i^{q^k}\lambda_i$ converges; moreover, if $k\geq 0$ it converges to the $k$\=/th coefficient of the logarithm $l_k$, while if $k<0$ it converges to $0$.
\end{prop}
\begin{proof}
    Let's fix $a\in A\setminus\F_q$ and fix an ordered basis $(\lambda''_i)_{i\geq1}$ of $\Lambda_\phi$. By Lemma \ref{norm bound} there is some $N$ such that, for all $i>N$, $\|\lambda''_i\|=\|a\lambda''_{i-r\deg(a)}\|$; if we define
    \[\lambda'_i\coloneqq\begin{dcases}
        \lambda''_i&\text{if }i\leq N\\
        a\lambda'_{i-r\deg(a)}&\text{if }i>N,
    \end{dcases}\]
    we have $\|\lambda''_i\|=\|\lambda'_i\|$ for all $i$, hence $(\lambda'_i)_i$ is also an ordered basis of $\Lambda_\phi$.

    Let's write $\zeta_\phi=\sum_i z'_i\otimes\lambda'_i$; if we denote by $({\lambda'_i}^*)_i$ the corresponding dual basis of $\hat\Lambda_\phi$, if we call $\psi$ Poonen's isomorphism $\hat\Lambda_\phi\cong\ker(\exp_\phi^*)\subseteq\C$, by the Remark \ref{oss universal dual special function} we know that $z'_i=\psi({\lambda'_i}^*)$; in particular, for $i\gg0$, we have \[z'_i=\phi_a^*(z'_{i+r\deg(a)}).\]

    Let's write $\phi_a^*=\sum_k \tau^{-k}a_k$. There is some real constant $\varepsilon>0$ such that that, for any $c\in\C$ with $\|c\|<\varepsilon$, \[\|\phi_a^*(c)\|=\|a_{r\deg(a)} c\|^{q^{-r\deg(a)}}<\|c\|.\] Since the sequence $(z'_i)_i$ converges to $0$, for $i\gg0$ we have \[\|z'_i\|=\|\phi_a^*(z'_{i+r\deg(a)})\|=\|a_{r\deg(a)} z'_{i+r\deg(a)}\|^{q^{-r\deg(a)}},\]
    hence
    \[\|z'_{i+r\deg(a)}\|=\|z'_i\|^{q^{r\deg(a)}}\|a_{r\deg(a)}\|^{-1}.\] By recursion, for all $k\geq0$ there is a positive real constant $\varepsilon<1$ and a positive integer $M$ such that, for $i>M$:
    \[\|z'_{i+kr\deg(a)}\|=\|z'_i\|^{q^{kr\deg(a)}}\|a_{r\deg(a)}\|^{-\frac{q^{kr\deg(a)}-1}{q-1}}<\varepsilon^{q^{kr\deg(a)}}\]
    In particular, by setting $i=M+1,\dots,M+r\deg(a)$, and setting $\delta\coloneqq\varepsilon^{q^{-M-r\deg(a)}}<1$, we deduce that $\|z'_n\|<\delta^{q^n}$ for $n\geq M$. In particular, the series $\sum_i {z'_i}^{q^k}\lambda'_i$ converges in $\C$ for any integer $k$.

    For all $i$, we can write $\lambda'_i$ as a finite sum $\sum_j\alpha_{i,j}\lambda_j$ with constants $\alpha_{i,j}\in\F_q$, so we have:
    \[\zeta_\phi=\sum_i z'_i\otimes\lambda'_i=\sum_i z'_i\otimes\left(\sum_j\alpha_{i,j}\lambda_j\right)=\sum_i\sum_j\alpha_{i,j} z'_i\otimes\lambda_j=\sum_j\left(\sum_i\alpha_{i,j }z'_i\right)\otimes\lambda_j.\]
    For all $j$, we deduce $z_j=\sum_i\alpha_{i,j}z'_i$. Moreover, for any integer $k$:
    \[\sum_j z_j^{q^k}\lambda_j=\sum_j\left(\sum_{i\geq j}\alpha_{i,j}{z'_i}^{q^k}\right)\lambda_j=\sum_i{z'_i}^{q^k}\left(\sum_{j\leq i}\alpha_{i,j}\lambda_j\right)=\sum_i {z'_i}^{q^k}\lambda'_i.\]
    For all $k$, let's set $l'_k\coloneqq\sum_i {z_i'}^{q^k}\lambda_i'$.
    If $k>0$, we have:
    \begin{align*}
        l_k&=\sum_{\lambda\in\Lambda_\phi}\lambda^{1-q^k}=\sum_{\lambda\in\Lambda_\phi}\lambda^{-q^k}\sum_i{\lambda'_i}^*(\lambda)\lambda'_i=\sum_i\left(\sum_{\lambda\in\Lambda_\phi}\lambda^{-q^k}{\lambda'_i}^*(\lambda)\right)\lambda'_i\\
        &=\sum_i\left(\sum_{\lambda\in\Lambda_\phi}\lambda^{-1}{\lambda'_i}^*(\lambda)\right)^{q^k}\lambda'_i=\sum_i g({\lambda'_i}^*)^{q^k}\lambda'_i=\sum_i {z'_i}^{q^k}\lambda_i'=l'_k.
    \end{align*}
    Note that for all $a\in A$, $\sum_i z'_i\otimes a\lambda_i=\sum_i\phi_a^*(z'_i)\otimes\lambda_i$, hence for any integer $k$ we have the identity:
    \[\sum_i {z'_i}^{q^k}\lambda_i=\sum_i\phi_a^*(z'_i)^{q^k}\lambda_i.\]
    Define $\log'_\phi\coloneqq\sum_k l'_k\tau^k\in\C[\![\tau^{-1},\tau]\!]$. For all $a\in A$, if we write $\phi_a^*=\sum_j\tau^{-j}a_j$, we have:
    \begin{align*}
        a\log'_\phi&=a\sum_kl'_k\tau^k=\sum_k\sum_i a\lambda'_i{z'_i}^{q^k}\tau^k 
        =\sum_k\sum_i\lambda'_i\phi_a^*(z'_i)^{q^k}\tau^k\\&=\sum_k\sum_i\lambda'_i\left(a_j z'_i \right)^{q^{k-j}}\tau^k
        =\sum_k\sum_j \left(\sum_i\lambda'_i\tau^{k-j}z'_i\right) a_j \tau^j\\&=\sum_k\sum_j l'_{k-j}\tau^{k-j}a_j\tau^j=\log'_\phi\circ\phi_a.
    \end{align*}
    Since $\log_\phi$ has the same property, $\log_\phi-\log'_\phi$ is a series in $\C[\![\tau^{-1}]\!]$ such that $a(\log_\phi-\log'_\phi)=(\log_\phi-\log'_\phi)\phi_a$ for all $a\in A$. Since the degrees of both sides differ if $a\not\in\F_q$ and $\log_\phi-\log'_\phi\neq0$, we deduce that $\log_\phi=\log'_\phi$, hence $l'_k=0$ for all $k<0$ and $l'_0=1$.
    
\end{proof}

\section{Pairing between Anderson eigenvectors and dual Anderson eigenvectors}\label{section pairing}

Let's fix a Drinfeld module $\phi$ with exponential $\exp_\phi=\sum_{i\geq0}e_i\tau^i$ and assume that $\Lambda_\phi:=\ker\exp_\phi\subseteq\C$ has rank $r$ as an $A$\=/module; let $\log_\phi=\sum_{i\geq0}l_i\tau^i$ be the inverse of $\exp_\phi$ as an element of $\C[\![\tau]\!]$.

\subsection{Definition of the dot product}

The following rationality result (a weak version of \cite[Thm. 6.3]{Ferraro}) links Anderson eigenvectors and dual Anderson eigenvectors in the rank 1 case.

\begin{teo}[Ferraro]\label{Ferraro}
Assume that $\phi$ is a normalized Drinfeld module of rank $1$. The product of an element in $\Sf_{\phi^*}(A)$ and an element in $\Sf_{\phi}(A)$ is a rational function over $X_\C$.
\end{teo}

To generalize this statement to Drinfeld modules of arbitrary rank, we need a proper way of "multiplying" $\zeta_\phi$ and $\omega_\phi$, established in the following lemma.

\begin{lemma}\label{ev}
The following $A_\C$\=/linear pairing is well defined:
\[\begin{tikzcd}[column sep=scriptsize, row sep=small]
&\C\hat\otimes\Lambda_\phi\arrow[r,phantom,"\otimes"] & \C\hat\otimes(\Lambda_\phi^*\otimes_A\Omega)\arrow[r]&\C\hat\otimes\Omega\\
&\sum_i c_i\otimes\lambda_i\arrow[r,phantom,"\otimes"] 
&\sum_j d_j\otimes(\lambda^*_j\otimes\omega_j)\arrow[r,phantom,"\mapsto"] 
&\sum_{i,j}(c_i d_j)\otimes(\lambda^*_j(\lambda_i)\omega_j)\\
&&f\arrow[u,phantom,sloped,"\coloneqq"]&g\arrow[u,phantom,sloped,"\coloneqq"]
\end{tikzcd}\]
Moreover, considering $g$ and $f$ as continuous functions respectively from ${\K}/{A}$ and $\Lambda_\phi\otimes_A{\K}/{A}$ to $\C$, for all $b\in{\K}/{A}$ we have:
\[g(b)=\sum_i c_i f(\lambda_i\otimes b).\]
\end{lemma}
\begin{proof}
The morphism is well defined because for all $\varepsilon>0$ there are finitely many pairs of indices $(i,j)$ such that $\|c_id_j\|>\varepsilon$; the $A_\C$\=/linearity is also obvious from the definition. Call $\res:\Omega\otimes\K/A\to\F_q$ and $\res_{\Lambda_\phi}:(\Lambda_\phi^*\otimes_A\Omega)\otimes\left(\Lambda_\phi\otimes_A{\K}/{A}\right)\to\F_q$ the two perfect pairings. By Remark \ref{oss dual} we have:
\[g(b)=\sum_{i,j}c_id_j\res(\lambda^*_j(\lambda_i)\omega_j,b)=\sum_i c_i\sum_j d_j\res_\Lambda(\lambda^*_j\otimes\omega_j,\lambda_i\otimes b)=\sum_i c_i f(\lambda_i\otimes b).\tag*{\qedhere}\]
\end{proof}

\subsection{Rationality of the dot products \texorpdfstring{$\zeta_\phi\cdot\omega_\phi^{(k)}$}{ζ · ω}}

The pairing defined in Lemma \ref{ev} will be denoted by a dot product. For any element $h\in\C\hat\otimes\Omega=\Hom^\mathrm{cont}_{\F_q}\left({\K}/{A},\C\right)$ and for any $b\in\K$ with projection $\closure{b}\in{\K}/{A}$, to simplify notation we will write $h(b)$ to denote $h(\closure{b})$. We can now state the generalization of Theorem \ref{Ferraro}.

\begin{teo}\label{Teo rationality}
For any Drinfeld module $\phi$, for all integers $k$, the dot product $\zeta_\phi\cdot\omega_\phi^{(k)}$ in $\C\hat\otimes\Omega$ is a rational differential form over the base-changed curve $X_{\C}$. Moreover, for all positive integers $k$, $\zeta_\phi\cdot\omega_\phi^{(k)}\in \Omega_\C$.
\end{teo}

\begin{proof}
    As an element of $\Hom_{\F_q}^\mathrm{cont}\left({\K\Lambda_\phi}/{\Lambda_\phi},\C\right)$, $\omega_\phi$ sends the projection of any $c\in\K\Lambda_\phi$ to $\exp_\phi(c)$. By Lemma \ref{ev}, since $\zeta_\phi=-\sum_{\lambda\in\Lambda_\phi\setminus\{0\}}\lambda^{-1}\otimes\lambda$, for all $b\in\K$ and for all integers $k$ we have:
    \[\zeta_\phi\cdot\omega_\phi^{(k)}(b)=-\sum_{\lambda\in\Lambda_\phi\setminus\{0\}}\frac{\exp(b\lambda)^{q^k}}{\lambda}=\left(-\sum_{\lambda\in\Lambda_\phi\setminus\{0\}}\frac{\exp(b\lambda)}{\lambda^{q^{-k}}}\right)^{q^k}.\]
    By Proposition \ref{identity 2}, for all positive integers $k$, if $b\in\K$ has norm at most $q^{-\frac{k+1}{r}}$, $\zeta_\phi\cdot\omega_\phi^{(k)}(b)=0$. Let's denote by $C\subseteq{\K}/{A}$ the subspace generated by the projections of elements in $\K$ with norm at most $q^h$, and denote by $Q$ the quotient. Since $Q$ is a finitely generated $\F_q$\=/vector space, we get the following:
    \[\Hom_{\F_q}^\mathrm{cont}\left(\faktor{\K}{A},\C\right)\supseteq\Hom_{\F_q}^\mathrm{cont}(Q,\C)=\Hom_{\F_q}(Q,\C)=\C\otimes\hat{Q}.\]
    Since $\zeta_\phi\cdot\omega_\phi^{(k)}$ restricted to $C$ is identically $0$, it's contained in $\C\otimes\hat{Q}$, therefore it can be expressed as a finite sum:
    \[\zeta_\phi\cdot\omega_\phi^{(k)}=\sum_i c_i\otimes\mu_i\in\C\otimes\hat{Q}\subseteq\C\otimes\widehat{\faktor{\K}{A}}=\Omega_\C.\]

    To prove the theorem for all integers $k$ we proceed by induction. Assume that the result holds for all integers bigger than $k$, and fix some $a\in A\setminus\F_q$. From the definition of special functions we have:
    \begin{align*}
        &(1\otimes a-a\otimes1)\omega_\phi=\sum_{i=1}^{r\deg(a)}(\phi_a)_i\omega_\phi^{(i)}\\
        \Longrightarrow&\zeta_\phi\cdot\omega_\phi^{(k)}=\frac{1}{1\otimes a-a^{q^k}\otimes 1}\sum_{i=1}^{r\deg(a)}(\phi_a)_i^{q^k}\zeta_\phi\cdot\omega_\phi^{(k+i)},
    \end{align*}
    hence $\zeta_\phi\cdot\omega_\phi^{(k)}$ is a rational differential form over $X_\C$.
\end{proof}

\begin{oss}\label{oss computation}
    From the previous proof we deduce that, if we can compute the dot product $\zeta_\phi\cdot\omega_\phi^{(k)}$ for $r\deg(a)$ consecutive integers $k$, then we can compute it for any value of $k$.
\end{oss}

\subsection{Computation of the dot products \texorpdfstring{$\zeta_\phi\cdot\omega_\phi^{(k)}$  for $k\ll0$}{ω · ζ}}

We can expand on the previous theorem. In fact, we are able to describe explicitly the differential form $\zeta_\phi^{(k)}\cdot\omega_\phi$ for $k$ large enough by using once again Proposition \ref{identity 2}.

\begin{teo}\label{Teo pairing for large k}
    For all $b\in\K$ denote by $s(b)\in\K$ an element of smallest norm such that $b-s(b)\in A$. For all integers $k>re\left(\left\lfloor\frac{2g-2}{e}\right\rfloor+1\right)$, we have the following identity for all $b\in\K$:
    \[\zeta_\phi^{(k)}\cdot\omega_\phi(b)=\sum_{j=0}^k e_jl_{k-j}^{q^j}s(b)^{q^j}.\]
\end{teo}

\begin{proof}
    Recall that the norm of all elements in $\K$ is an integer power of $q^e$. Fix any $b\in\K$, assume $\|s(b)\|=q^{ed}$ for some integer $d$; the $\F_q$\=/vector space ${H^0(X,d\infty)}/{H^0(X,(d-1)\infty)}$ has dimension less than $e$, otherwise there would be some $a\in H^0(X,d\infty)\subseteq A$ such that $\|s(b)-a\|<\|s(b)\|$, contradicting the minimality condition on $s(b)$. By Riemann--Roch, if $e(d-1)>2g-2$, the spaces $H^0(X,(d-1)\infty)$ and $H^0(X,d\infty)$ have dimension respectively $e(d-1)-g+1$ and $ed-g+1$, which is a contradiction, hence $\|s(b)\|\leq q^{e\left(\left\lfloor\frac{2g-2}{e}\right\rfloor+1\right)}$. Since $\frac{k-1}{r}\geq e\left(\left\lfloor\frac{2g-2}{e}\right\rfloor+1\right)$, by Proposition \ref{identity 2} we have:
    \[\zeta_\phi^{(k)}\cdot\omega_\phi(b)=\zeta_\phi^{(k)}\cdot\omega_\phi(s(b))=-\sum_{\lambda\in\Lambda_\phi\setminus\{0\}}\frac{\exp_\phi(s(b)\lambda)}{\lambda^{q^k}}=\sum_{j=0}^k e_jl_{k-j}^{q^j}s(b)^{q^j}.\tag*{\qedhere}\]
\end{proof}
\begin{oss}\label{oss pairing for large k}
    Equivalently, for all integers $k>re\left(\left\lfloor\frac{2g-2}{e}\right\rfloor+1\right)$ and for all $b\in\K$:
    \[\zeta_\phi\cdot\omega_\phi^{(-k)}(b)=\left(\sum_{j=0}^k e_jl_{k-j}^{q^j}s(b)^{q^j}\right)^{q^{-k}}.\]
    In principle, we can use this result to compute $\zeta_\phi\cdot\omega_\phi^{(i)}(b)$ for all $i$ and all $b$, in the same way we proved rationality in Theorem \ref{Teo rationality}, as we observed in Remark \ref{oss computation}.
\end{oss}

\subsection{The generating series of the dot products \texorpdfstring{$\zeta_\phi\cdot\omega_\phi^{(k)}$}{ζ · ω}}

Using Theorem \ref{Teo pairing for large k} and Remark \ref{oss pairing for large k}, we can in principle compute the dot product $\zeta_\phi\cdot\omega_\phi^{(k)}$ for any $k\geq-re\left(\left\lfloor\frac{2g-2}{e}\right\rfloor+1\right)$, but since the sketched algorithm is recursive, it's necessary to compute all the intermediate dot products $\zeta_\phi\cdot\omega_\phi^{(i)}$ for $i$ between $-re\left(\left\lfloor\frac{2g-2}{e}\right\rfloor+1\right)$ and $k$.

The objective of this subsection is to streamline this computation by studying the generating series $\sum_{k\in\Z}\zeta_\phi\cdot\omega_\phi^{(k)}\tau^k$.

\begin{Def}
Denote by $\C\langle\tau\rangle$ the subset of $\C[\![\tau]\!][\tau^{-1}]$ given by the series with a nonzero radius of convergence on $\C$.
\end{Def}
\begin{oss}
    The set $\C\langle\tau\rangle$ is closed under addition and composition, hence it is a subring of $\C[\![\tau]\!][\tau^{-1}]$.
\end{oss}
\begin{oss}\label{oss C<t>}
    Since the radius of convergence of $h=\sum_i h_i\tau^i\in\C[\![\tau]\!][\tau^{-1}]$ is the inverse of $\limsup_{i\to\infty}\|h_i\|^{q^{-i}}$, we have that $h\in\C\langle\tau\rangle$ if and only if $\limsup_{i\to\infty}\|h_i\|^{q^{-i}}<\infty$.
\end{oss}

\begin{lemma}
     Every nonzero element $h\in\C[\tau,\tau^{-1}]$ admits a (unique) bilateral inverse in $\C\langle\tau\rangle$.    
\end{lemma}
\begin{proof}
    Since $\tau:\C\to\C$ is an isomorphism, up to multiplication we can assume \[h=\sum_{i\geq0}h_i\tau^i,\]
    with $h_0=1$. If we call 
    \[h_+\coloneqq-\sum_{i\geq1}h_i\tau^i=1-h,\]
    the series $\sum_{i\geq0}h_+^i$ is a well defined bilateral inverse of $h$ in $\C[\![\tau]\!]$. Since $h$ has finitely many nonzero coefficients, it's easy to see that there is some $R\in\R_{>0}$ and some positive real constant $C<1$ such that, for all $x\in\C$ with norm less than $R$, $\|h_ix^{q^i}\|\leq C\|x\|$ for all $i\geq1$. In particular, for all $x\in\C$ with norm less than $R$, each of the finitely many summands in the expansion of $h_+^i(x)$ has norm at most $C^i\|x\|$, hence the series \[\sum_{i\geq0}h_+^i(x)\] converges. We deduce that the series $\sum_{i\geq0}h_+^i$ has a nonzero radius of convergence, hence it belongs to $\C\langle\tau\rangle$.
\end{proof}

\begin{Def}
    For all $c\in\K$ we define $\Phi_c\in\C\langle\tau\rangle$ as $\exp_\phi\circ c\circ\log_\phi$.
\end{Def}

\begin{oss}
    For all $a\in A$, $\Phi_a=\phi_a$. The map $\Phi:\K\to\C\langle\tau\rangle$ sending $c$ to $\Phi_c$ is the unique ring homomorphism which extends $\phi:A\to\C\langle\tau\rangle$ such that each coefficient is a continuous function.
\end{oss}

For a series $s\in\C[\![\tau,\tau^{-1}]\!]$ and an integer $k$, let's denote by $(s)_k$ its $k$-th coefficient, so that $s=\sum_{k\in\Z}(s)_k\tau^k$.

\begin{prop} 
    Let $\mu:\K\to\C[\![\tau,\tau^{-1}]\!]$ be a function with the following properties:
    \begin{enumerate}
        \item\label{(i)} $\forall k\in\Z$ the function sending $c$ to $(\mu_c)_k$ is $\F_q$\=/linear and continuous;
        \item\label{(ii)} $\forall a\in A,c\in\K$, $\mu_{ac}=\mu_c\phi_a$;
        \item\label{(iii)} $\forall a\in A$, $\mu_a=0$;
        \item\label{(iv)} $\forall R\in\R$ there is some $n_0\in\Z$ such that for all $n\geq n_0$, for all $c\in\K$ with $\|c\|\leq R$, $(\mu_c)_n=(\Phi_c)_n$.
    \end{enumerate}
    The function $\mu$ is uniquely determined; in addition, for any $c\in\K$, we have:
    \[\mu_c=\sum_{k\in\mathbb\Z} \left(\zeta_\phi^{(k)}\cdot\omega_\phi\right)(c)\tau^k.\]
\end{prop}
\begin{proof}
    To prove uniqueness, let's take two such functions $\mu$ and $\mu'$, and define \[\lambda:=\mu-\mu'.\] For each element $c\in\K$ let $s(c)$ be an element of least norm such that $c-s(c)\in A$. As we already said in the proof of Theorem \ref{Teo pairing for large k}, for all $c\in\K$, \[\|s(c)\|\leq q^{e\left(\left\lfloor\frac{2g-2}{e}\right\rfloor+1\right)};\] using properties \ref{(i)},\ref{(iii)}, and \ref{(iv)} with $R=q^{e\left(\left\lfloor\frac{2g-2}{e}\right\rfloor+1\right)}$, we deduce that there is some integer $n_0$ such that, for all $n\geq n_0$, for all $c\in\K$:
    \[(\lambda_c)_n=(\lambda_{s(c)})_n+(\lambda_{c-s(c)})_n=(\lambda_{s(c)})_n=(\mu_{s(c)})_n-(\mu'_{s(c)})_n=(\Phi_{s(c)})_n-(\Phi_{s(c)})_n=0.\]
    If by contradiction $\lambda\not\equiv0$, there is an element $c\in\K$ such that $\lambda_c$ has the highest degree; by property \ref{(ii)}, for any $a\in A\setminus\F_q$, $\lambda_{ac}=\lambda_c\phi_a$, which has a greater degree than $\lambda_c$, reaching a contradiction.

    Let's check that \[\mu_c\coloneqq\sum_{k\in\mathbb\Z} \left(\zeta_\phi^{(k)}\cdot\omega_\phi\right)(c)\tau^k\] satisfies all conditions. The properties \ref{(i)} and \ref{(iii)} are obvious. For property \ref{(iv)}, note that for all $c\in\K$
    \[(\Phi_c)_k=(\exp_\phi\circ c\circ\log_\phi)_k=\sum_{i+j=k}e_ic^{q^i}l_j^{q^i},\]
    which is equal to $\left(\zeta_\phi^{(k)}\cdot\omega_\phi\right)(c)$ for all $k\geq r\cdot\log_q(\|c\|)+1$ by Proposition \ref{identity 2}. Finally, for property \ref{(ii)}, since $\zeta_\phi$ is an Anderson eigenvector, for all $a\in A$ we have:
    \[(1\otimes a)\zeta_\phi=\sum_{i=0}^{r\deg(a)}(\phi_a)_i^{q^{-i}}\zeta_\phi^{(-i)}\Rightarrow\text{for all integers $k$, }(1\otimes a)\zeta_\phi^{(k)}=\sum_{i=0}^{r\deg(a)}(\phi_a)_i^{q^{k-i}}\zeta_\phi^{(k-i)}.\]
    We deduce that, for all $c\in\K$:
    \begin{align*}
        \mu_c\phi_a&= \left(\sum_{k\in\Z}\left(\zeta_\phi^{(k)}\cdot\omega_\phi\right)(c)\tau^k\right)\left(\sum_{i=0}^{r\deg(a)}(\phi_a)_i\tau^i\right)\\
        &=\sum_{k\in\Z}\left(\sum_{i=0}^{r\deg(a)}(\phi_a)_i^{q^{k-i}}\left(\zeta_\phi^{(k-i)}\cdot\omega_\phi\right)(c)\right)\tau^k\\
        &=\sum_{k\in\Z}\left((1\otimes a)\zeta_\phi^{(k)}\cdot\omega_\phi\right)(c)\tau^k=\sum_{k\in\Z}\left(\zeta_\phi^{(k)}\cdot\omega_\phi\right)(ac)\tau^k=\mu_{ac}.\tag*{\qedhere}
    \end{align*}
\end{proof}
\begin{oss}
    If $c\in\K\setminus A$, the formal power series $\mu_c$ has no obvious convergence properties. In fact, if $c\in K\setminus A$ we can choose $a\in A$ so that $ca\in A$, and we get that $\mu_c\phi_a=\mu_{ac}=0$: since $\mu_c\neq0$, this implies that its radius of convergence is $0$.
\end{oss}

\begin{Def}
    For all $c\in\K$ we define $\hat{\Phi}_c\coloneqq(\Phi_c-\mu_c)^*\in\C[\![\tau,\tau^{-1}]\!]$.
\end{Def}

\begin{prop}
    For all $c\in\K$, the series $\hat{\Phi}_c$ has a nonzero radius of convergence. Moreover, the map $\hat{\Phi}:\K\to\C\langle\tau\rangle$ sending $c$ to $\hat{\Phi}_c$ is the unique ring homomorphism which extends $\phi^*:A\to\C\langle\tau\rangle$ such that each coefficient is a continuous function.
\end{prop}
\begin{proof}
    Uniqueness is obvious: by multiplicativity there is at most one way to extend $\phi^*$ to the fraction field $K$, and by continuity there is at most one way to extend it to the completion $\K$. By definition of $\Phi$ and $\mu$, each coefficient of $\hat{\Phi}_c$ is a continuous function of $c$. 
    
    For all $c\in\K$, by Proposition \ref{identity 2} we have $(\hat{\Phi}_c)_k=\left((\Phi_c-\mu_c)^*\right)_k=0$ for $k\ll0$, hence $\hat{\Phi}_c\in\C[\![\tau]\!][\tau^{-1}]$.
    On the other hand, for $k\gg0$:
    \[((\hat{\Phi}_c)_k)^{q^{-k}}=((-\mu_c^*)_k)^{q^{-k}}=-(\mu_c)_{-k}=-\left(\zeta_\phi^{(-k)}\cdot\omega_\phi\right)(c)=\sum_{\lambda\in\Lambda_\phi\setminus\{0\}}\frac{\exp(c\lambda)}{\lambda^{q^{-k}}};\]
    all the numerators of the series belong to the compact space $\exp(\K\Lambda_\phi)\cong{\K\Lambda_\phi}/{\Lambda_\phi}$, and since $\Lambda_\phi\subseteq\C$ is discrete all the denominators are bounded from below: this means that the set $\{((\hat{\Phi}_c)_k)^{q^{-k}}\}_{k\gg0}$ is bounded, hence $\hat{\Phi}_c\in\C\langle\tau\rangle$ by Remark \ref{oss C<t>}.
    For all $a\in A$, for all $c\in \K$:
    \begin{align*}
    \hat{\Phi}_a&=(\Phi_a-\mu_a)^*=\phi^*_a\\
    \phi^*_a\circ\hat{\Phi}_c&=(\Phi_c\circ\phi_a-\mu_c\circ\phi_a)^*=(\Phi_{ac}-\mu_{ac})^*=\hat{\Phi}_{ac},
    \end{align*}
    which implies that $\hat{\Phi}$ extends $\phi^*$ multiplicatively.
\end{proof}

\begin{oss}
    For all $c\in\K$ we have:
    \[\mu_c^*=\sum_{k\in\Z}\left(\zeta_\phi\cdot\omega_\phi^{(k)}\right)(c)\tau^k.\]
\end{oss}
A posteriori, we can repackage the results of this subsection under the following theorem.

\begin{teo}\label{main teo}
    Let $\Phi,\hat{\Phi}:\K\to\C\langle\tau\rangle$ be the unique ring homomorphisms which extend respectively $\phi,\phi^*:A\to\C\langle\tau\rangle$ and such that their $k$\=/th coefficient is a continuous function from $\K$ to $\C$ for all $k\in\Z$. The following identity holds in the $\C[\tau,\tau^{-1}]$\=/module $\C[\![\tau,\tau^{-1}]\!]$ for all $c\in\K$:
    \[\sum_{k\in\mathbb\Z} \left(\zeta_\phi\cdot\omega_\phi^{(k)}\right)(c)\tau^k=\Phi_c^*-\hat{\Phi}_c.\]
\end{teo}

This Theorem allows us to partially carry out the computation of the dot products $\zeta_\phi\cdot\omega_\phi^{(k)}$, such as in the following Proposition.

\begin{prop}\label{cor 0 forms}
    For all $c\in\K$ with norm less than $1$:
    \[\left(\zeta_\phi\cdot\omega_\phi^{(k)}\right)(c)=\begin{dcases}
        c&\text{if }k=0\\
        0&\text{if }1\leq k\leq r-1.
    \end{dcases}\]
\end{prop}
\begin{proof}
    For all $c\in\K$ the lowest degree of $\hat\Phi_c$ is $-r\deg(c)$, while the highest degree of $\Phi_c^*$ is $0$. In particular, if $\|c\|<1$, i.e. $\deg(c)\leq-1$, we have:
    \[\left(\zeta_\phi\cdot\omega_\phi^{(k)}\right)(c)=(\Phi^*_c-\hat{\Phi}_c)_k=\begin{dcases}
        (\Phi^*_c-\hat{\Phi}_c)_0=(\Phi^*_c)_0=c&\text{if }k=0\\
        (\Phi^*_c-\hat{\Phi}_c)_k=0&\text{if }1\leq k\leq r-1.\tag*{\qedhere}
    \end{dcases}\]
    \end{proof}

\subsection{Application to the case of genus \texorpdfstring{$0$}{0} and arbitrary rank}

Thanks to Theorem \ref{main teo}, we can compute efficiently the dot products $\zeta_\phi\cdot\omega_\phi^{(k)}$ in the case of genus $0$ and rational point at infinity. In this subsection we assume $X=\mathbb{P}^1_{\F_q}$, and we fix a rational function $\theta$ over $X$ with a simple pole at $\infty$. In this case we can write $A=\F_q[\theta]$, $\K=\F_q(\!(\theta^{-1})\!)$ and $\Omega=\F_q[\theta]d\theta$, where $d\theta:{\K}/{A}\to\F_q$ sends $\theta^n$ to $\delta_{-1,n}$ for all $n\in\Z$.

\begin{prop}\label{g=0 i=0}
Let $\phi:\F_q[\theta]\to\C[\tau]$ be a Drinfeld module of rank $r$. We have the following identities in $\C\hat\otimes\Omega$:
    \begin{align*}
        \zeta_\phi\cdot\omega_\phi&=\frac{d\theta}{\theta\otimes1-1\otimes\theta};\\
        \zeta_\phi\cdot\omega_\phi^{(k)}&=0\;\;\;\forall1\leq k\leq r-1.
    \end{align*}
\end{prop}

\begin{proof}
By Proposition \ref{cor 0 forms}, for all $n>0$ we have:
\[\left(\zeta_\phi\cdot\omega_\phi^{(k)}\right)(\theta^{-n})=\begin{dcases}
    \theta^{-n}&\text{if }k=0\\
    0&\text{if }1\leq k\leq r-1.
\end{dcases}\]
Since $\theta^n\in A$ for all $n\geq0$, we also have $\left(\zeta_\phi\cdot\omega_\phi^{(k)}\right)(\theta^n)=0$ for all $n\geq0$ and for all $k$ so, if $1\leq k\leq r-1$, $\zeta_\phi\cdot\omega_\phi^{(k)}$ is identically zero. If instead $k=0$ we have the following identity for all integers $n$:
\[\left((\theta\otimes1-1\otimes \theta)\zeta_\phi\cdot\omega_\phi\right)(\theta^n)=\theta\left(\zeta_\phi\cdot\omega_\phi(\theta^n)\right)-\zeta_\phi\cdot\omega_\phi(\theta^{n+1})=\delta_{-1,n}=d\theta(\theta^n),\]
hence \[\zeta_\phi\cdot\omega_\phi=\frac{d\theta}{(\theta\otimes1-1\otimes \theta)}.\tag*{\qedhere}\]
\end{proof}

We now relate the usual definition of Anderson generating functions to the universal Anderson eigenvector, by giving a basis-dependent description of the latter.

\begin{lemma}
    Fix the $A$\=/linear bases $\{\pi_1,\dots,\pi_r\}$ of $\Lambda_\phi$ and $\{\pi_1^*,\dots,\pi_r^*\}$ of $\Lambda_\phi^*$, where $\pi_i^*(\pi_k)=\delta_{i,k}$. Then, we have:
    \begin{align*}
    &\omega_\phi= \sum_{i=1}^r\sum_{j\geq0} \exp_\phi\left(\frac{\pi_i}{\theta^{j+1}}\right) \otimes\theta^j\pi_i^*d\theta,
    &\zeta_\phi=\sum_{i=1}^r\sum_{j\geq0}\left(\sum_{\lambda\in\Lambda\setminus\{0\}}\frac{d\theta\pi_i^*}{\theta^{j+1}}(\lambda)\lambda^{-1}\right)\otimes\theta^j\pi_i.
\end{align*}
\end{lemma}

\begin{proof}
When used as indices, we imply $i$ to vary among the integers between $1$ and $r$, extremes included, and $j$ to vary among the nonnegative integers.
The chosen bases induce an isomorphism \[\Hom_A(\Lambda_\phi,\Omega)\cong\bigoplus_i A d\theta\pi_i^*.\] The $\F_q$\=/linear basis $\{\theta^j d\theta\pi_i^*\}_{i,j}$ of $\Hom_A(\Lambda_\phi,\Omega)$ induces a dual basis $\{\theta^{-j-1}\pi_i\}_{i,j}$ of \[\widehat{\Hom_A(\Lambda_\phi,\Omega)}\cong\faktor{\K\Lambda_\phi}{\Lambda_\phi}.\] Similarly, the $\F_q$\=/linear basis $\{\theta^j\pi_i\}_{i,j}$ of $\Lambda_\phi$: induces the dual basis $\{\theta^{-j-1}d\theta\pi_i^*\}_{i,j}$ of \[\hat\Lambda_\phi\cong\faktor{\K\Hom_A(\Lambda_\phi,\Omega)}{\Hom_A(\Lambda_\phi,\Omega)}.\] This concludes the proof, by virtue of Remark \ref{oss universal special function} and the proof of Theorem \ref{zeta function functor}.
\end{proof}

\begin{Def}
For $i=1,\dots,r$ we define the $i$\=/th Anderson generating function as:
\[\omega_{\phi,i}\coloneqq\sum_{j\geq0}\exp_\phi\left(\frac{\pi_i}{\theta^{j+1}}\right)\otimes\theta^j\in\C\hat\otimes A.\]

Similarly, for $i=1,\dots,r$ we define the $i$\=/th dual Anderson generating function as:
    \[\zeta_{\phi,i}= \sum_{j\geq0} \left(\sum_{\lambda\in\Lambda\setminus\{0\}} \frac{d\theta\pi_i^*}{\theta^{j+1}}(\lambda)\lambda^{-1}\right)\otimes\theta^j\in\C\hat\otimes A.\]
\end{Def}
\begin{oss}
For all integers $1\leq i\leq r$, $\omega_{\phi,i}$ and $\zeta_{\phi,i}$ are the unique elements in $\C\hat\otimes A$ such that the identities $(1\otimes\pi_i)(\omega_\phi)=\omega_{\phi,i} d\theta$ and $(1\otimes\pi^*_i)(\zeta_\phi)=\zeta_{\phi,i}$ hold (in $\C\hat\otimes\Omega$ and $\C\hat\otimes A$, respectively).
\end{oss}

\begin{Def}
    Let's define $\boldsymbol{\omega}_\phi\coloneqq(\omega_{\phi,i}^{(j-1)})_{i,j}\in\Mat_{r\times r}(\C\hat\otimes A)$. We call it the \textit{rigid analytic trivialization} of the $t$\=/motive attached to $\phi$.
\end{Def}

The previous matrix has been studied in various articles (see for example \cite[Section 4.2]{Pellarin08}, \cite{KP23}, \cite{GP19}). We can use it to state the following Theorem.
\begin{teo}
    The product of $\zeta_\phi\in \Mat_{1\times r}(\C\hat\otimes A)$ and $\boldsymbol{\omega}_\phi\in\Mat_{r\times r}(\C\hat\otimes A)$ is the vector \[\frac{1}{(\theta\otimes1-1\otimes \theta)}\cdot(1,0,\dots,0)\in\Mat_{1\times r}(\C\hat\otimes A).\]
\end{teo}
\begin{proof}
    Note that we have interpreted $\zeta_\phi$ as $(\zeta_{\phi,i})_i\in \Mat_{1\times r}(\C\hat\otimes A)$. If we multiply by $d\theta\in\Omega$ the $j$\=/th coordinate of the product, we get:
    \[\sum_{i=1}^r\omega_{\phi,i}^{(j-1)} \zeta_{\phi,i}d\theta= \left(\sum_{i=1}^r\omega_{\phi,i}\pi_i^*d\theta\right)^{(j-1)}\cdot\left(\sum_{i=1}^r\zeta_{\phi,i}\pi_i\right)=\omega_\phi^{(j-1)}\cdot\zeta_\phi,\]
    which is $\frac{d\theta}{(\theta\otimes1-1\otimes \theta)}$ if $j=1$ by Proposition \ref{g=0 i=0}, and $0$ otherwise by Proposition \ref{cor 0 forms}.
\end{proof}

\begin{oss}
    It's a well known result 
    that the determinant of the matrix $\boldsymbol{\omega}_\phi$ is nonzero (see for example \cite[Prop. 6.2.4]{GP19}), so by the previous theorem we can recover $\zeta_\phi$ from $\boldsymbol{\omega}_\phi$.
\end{oss}

\subsection{Application to the case of hyperelliptic curves}

In the case of rank $1$ normalized Drinfeld modules, the result \cite[Thm. 7.26]{Ferraro} can be used to express the rational form $\zeta_\phi\cdot\omega_\phi$ in terms of the Drinfeld divisor. While Theorem $\ref{main teo}$, in principle, completely describes the form $\zeta_\phi\cdot\omega_\phi$, it's not as explicit a result for arbitrary curves.

In this subsection we restrict ourselves to the case of a hyperelliptic curve $X$ with hyperelliptic divisor $2\infty$ and a Drinfeld module $\phi$ of rank $1$. We use the results of the previous sections to recover an expression for the scalar product $\zeta_\phi\cdot\omega_\phi$ and for the shtuka function $f_\phi$ in terms of the coefficients of $\phi$. 

A curve $X$ of genus $g$ is hyperelliptic if and only if there is a divisor $D$ of degree $2$, called hyperelliptic divisor, such that $\dim_{\F_q}(H^0(X,\O_X(D)))=2$. If we assume $D=2\infty$, there is a rational function $x\in A$ of degree $2$. Let's denote by $y$ an element of $A$ with the smallest odd degree.

\begin{oss}
    An $\F_q$\=/linear basis of $A$ is $\mathcal{B}_0:=\{x^i,x^iy\}_{i\geq0}$. In particular, the only positive integers that are not degrees of elements in $A$ are the odd positive integers smaller than $\deg(y)$; by Riemann--Roch's theorem, this set has cardinality $g$, hence $\deg(y)=2g+1$. Expanding $y^2$ in terms of the basis $\mathcal{B}_0$, we deduce that there are polynomials $P,Q\in\F_q[t]$ such that $y^2=Q(x)y+P(x)$, where $P$ has degree $2g+1$ and $Q$ has degree at most $g$.

    If the characteristic of the base field is odd, we can also assume $Q(x)=0$ using the coordinate change $y\mapsto y+\frac{Q(x)}{2}$.
\end{oss}

\begin{oss}
    Every element of ${\K}/{A}$ can be represented by an element of $\K$ with degree either negative or equal to an odd positive number smaller than $2g+1$. We deduce that the image of $\mathcal{B}:=\{yx^{-i-1},x^{-i-1}\}_{i\geq0}$ in ${\K}/{A}$ is a set of linearly independent elements which spans a dense subset of ${\K}/{A}$.
\end{oss}

\begin{prop}
    If we define $\nu\in\Omega=\Hom^\mathrm{cont}_{\F_q}\left({\K}/{A},\F_q\right)$ as the function sending $yx^{-1}$ to $1$ and all the other elements of $\mathcal{B}$ to $0$, we get that $\Omega=A\nu$.
\end{prop}
\begin{proof}
    For all $j\geq0$, for all $c\in\K$, $(x^j\nu)(c)=\nu(x^jc)$, which is $1$ when $c=yx^{-j-1}$ and $0$ on all the other elements of $\mathcal{B}$. 
    
    Similarly, For all $j\geq0$, for all $c\in\K$, $((y-Q(x))x^j\nu)(c)=\nu((y-Q(x))x^jc)$. If $c=x^{-1-i}$ for some $i\geq0$ we have:
    \[((y-Q(x))x^j\nu)(c)=\nu(yx^{j-i-1})-\nu(Q(x)x^{j-i-1})=\nu(yx^{j-i-1})=\delta_{j,i}.\]
    If $c=yx^{-1-i}$ for some $i\geq0$ we have:
    \[((y-Q(x))x^j\nu)(c)=\nu((y^2-Q(x)y)x^{j-i-1})=\nu(P(x)x^{j-i-1})=0.\]

    In particular, the elements \[\{(y-Q(x))x^i\nu,x^i\nu\}_{i\geq0}\subseteq\Omega=\Hom_{\F_q}\left(\faktor{\K}{A},\F_q\right)\] are independent, and since $\mathcal{B}$ spans a dense subset of ${\K}/{A}$, they also generate all of $\Omega$.
\end{proof}

\begin{lemma}\label{lemma 1 g=1}
    Denote by $\left({\K}/{A}\right)_{<q^{-2}}\subseteq{\K}/{A}$ the subspace of the elements with norm less than $q^{-2}$, and call $C$ the cokernel of this inclusion. Then, the image of \[\{yx^{-i-1}\}_{0\leq i\leq g}\cup\{x^{-1}\}\] forms a basis of $C$, and the set \[\{x^i\nu\}_{0\leq i\leq g}\cup\{(y-Q(x))\nu\}\] is the corresponding dual basis of $\Hom_{\F_q}(C,\F_q)\subseteq\Omega$.
\end{lemma}
\begin{proof}
    On one hand, the images of $\{yx^{-i-1}\}_{0\leq i\leq g}\cup\{x^{-1}\}$ span $C$ because they are the only elements of $\mathcal{B}$ that are not sent to $0$ under the induced map $\K\to C$. On the other hand, 
    \begin{align*}
        \deg(yx^{-i-1})&=2(g-i)-1\text{ { } for }0\leq i\leq g,\\
        \deg(x^{-1})&=-2,
    \end{align*} hence their images are $\F_q$\=/linearly independent in $C$.
    
    Note that the image of $\{yx^{-j-1-g},x^{-j-1}\}_{j\geq1}$ in ${\K}/{A}$ spans a dense subset of $\left({\K}/{A}\right)_{<q^{-2}}$. For all $0\leq i\leq g$, for all $j\geq1$ we get:
    \[\begin{dcases}
    (x^i\nu)(yx^{-j-1-g})=\nu(yx^{i-j-1-g})=0&\text{because $i-j-1-g\leq-2$}\\
    (x^i\nu)(x^{-j-1})=\nu(x^{i-j})=0&
    \end{dcases}\]
    and
    \[\begin{dcases}
    ((y-Q(x))\nu)(yx^{-j-1-g})=\nu(P(x)x^{-j-1-g})=0&\\
    ((y-Q(x))\nu)(x^{-j-1})=\nu(yx^{-j-1})-\nu(Q(x)x^{-j-1})=0&\text{because $-j-1\leq-2$},
    \end{dcases}
    \]
    so $\{x^i\nu\}_{0\leq i\leq g}\cup\{y\nu\}\in\Hom_{\F_q}(C,\F_q)$. On the other hand, we have the following identities for all $0\leq i\leq g$ and for all $0\leq j\leq g$:
    \[\begin{dcases}
    (x^i\nu)(yx^{-j-1})=\nu(yx^{i-j-1})=\delta_{i,j}\\
    (x^i\nu)(x^{-1})=\nu(x^{i-1})=0
    \end{dcases}\]
    and
    \[\begin{dcases}
    ((y-Q(x))\nu)(yx^{-j-1})=\nu(P(x)x^{-j-1})=0\\
    ((y-Q(x))\nu)(x^{-1})=\nu(yx^{-1})-\nu(Q(x)x^{-1})=1.
    \end{dcases}
    \]
    This implies that \[\{x^i\nu\}_{0\leq i\leq g}\cup\{(y-Q(x))\nu\}\] is the dual basis of \[\{yx^{-i-1}\}_{0\leq i\leq g}\cup\{x^{-1}\},\] as desired.
\end{proof}

\begin{oss}
By Theorem \ref{main teo}, we have the following identity for all $c\in K$ and for all $i\in\Z$:
    \[\left(\zeta_\phi\cdot\omega^{(i)}_\phi\right)(c)=((\phi_c)^*-(\phi^*)_c)_i,\]
    where $\left(\zeta_\phi\cdot\omega^{(i)}_\phi\right)$ is considered as a continuous homomorphism from ${\K}/{A}$ to $\C$. In particular, for all $c\in K$ with degree less than $-i$:
    \[\left(\zeta_\phi\cdot\omega^{(i)}_\phi\right)(c)=\begin{cases*}
        c\text{ if }i=0\\
        0\text{ if }i>0.
    \end{cases*}\] 
Moreover, for all $0\leq i\leq g$ we have:
    \begin{align*}
        &(\zeta_\phi\cdot\omega_\phi)(yx^{-i})=yx^{-i}-\left(\phi^*_{yx^{-i}}\right)_0;\\
    &\left(\zeta_\phi\cdot\omega^{(1)}_\phi\right)(yx^{-i})=-\left(\phi^*_{yx^{-i}}\right)_1.
    \end{align*}
\end{oss}
 
\begin{teo}\label{prop hyper}
    We have the following identities for the dot product $\zeta_\phi\cdot\omega_\phi$ and the shtuka function $f_\phi$:
    \begin{align*}
    \zeta_\phi\cdot\omega_\phi&=\left(\frac{y\otimes 1+1\otimes (y-Q(x))}{x\otimes1-1\otimes x}-\sum_{i=0}^{g-1}\left(\phi^*_{yx^{-i-1}}\right)_0\otimes x^i\right)(1\otimes \nu)\\
        f_\phi&=\frac{(x\otimes1-1\otimes x)\left(-\sum_{i=0}^g \left(\phi^*_{yx^{-i-1}}\right)_1\otimes x^i\right)}{y\otimes 1+1\otimes (y-Q(x))-(x\otimes1-1\otimes x)\left(\sum_{i=0}^{g-1}\left(\phi^*_{yx^{-i-1}}\right)_0\otimes x^i\right)}.
    \end{align*}
\end{teo}
\begin{proof}
    For all $c\in\K$ of norm less than $1$, $(\zeta_\phi\cdot\omega_\phi)(c)=c$. In particular, for all $c\in\left({\K}/{A}\right)_{<q^2}$ we have:
    \[(x\otimes1-1\otimes x)(\zeta_\phi\cdot\omega_\phi)(c)=x(\zeta_\phi\cdot\omega_\phi)(c)-(\zeta_\phi\cdot\omega_\phi)(xc)=0.\]
In particular, by Lemma \ref{lemma 1 g=1} $(x\otimes1-1\otimes x)(\zeta_\phi\cdot\omega_\phi)$ is completely determined by its evaluation at $\{yx^{-i-1}\}_{0\leq i\leq g}\cup\{x^{-1}\}$ as a function from ${\K}/{A}$ to $\C$. Since $(\zeta_\phi\cdot\omega_\phi)(yx^{-i})=yx^{-i}-\left((\phi^*_x)^{-i}\circ\phi^*_y\right)_0$ for all $0\leq i\leq g$, we can compute the following evaluations:
\begin{align*}
    (x\otimes1-1\otimes x)(\zeta_\phi\cdot\omega_\phi)(yx^{-i-1})&=x(\zeta_\phi\cdot\omega_\phi)(yx^{-i-1})-(\zeta_\phi\cdot\omega_\phi)(yx^{-i})\\
    &=\left(\phi^*_{yx^{-i}}-x\phi^*_{yx^{-i-1}}\right)_0;\\
(x\otimes1-1\otimes x)(\zeta_\phi\cdot\omega_\phi)(x^{-1})&=x(\zeta_\phi\cdot\omega_\phi)(x^{-1})-(\zeta_\phi\cdot\omega_\phi)(1)=1;\\
\left(\zeta_\phi\cdot\omega^{(1)}_\phi\right)(yx^{-i-1})&=-\left(\phi^*_{yx^{-i-1}}\right)_1\\
\left(\zeta_\phi\cdot\omega^{(1)}_\phi\right)(x^{-1})&=0.
\end{align*}
By Lemma \ref{lemma 1 g=1}, and using that $\phi^*_{yx^{-g-1}}$ has degree $1$ in $\tau$, we deduce the following identities:
\begin{align*}
    (x\otimes1-1\otimes x)(\zeta_\phi\cdot\omega_\phi)=&\left(\sum_{i=0}^g \left(\phi^*_{yx^{-i}}-x\phi^*_{yx^{-i-1}}\right)_0\otimes x^i+1\otimes (y-Q(x))\right)(1\otimes\nu)\\
    =&\left(\left(\phi^*_{y}\right)_0\otimes 1+1\otimes (y-Q(x))\right)(1\otimes\nu)\\
    +&\left(\sum_{i=0}^{g-1} \left(\phi^*_{yx^{-i-1}}\right)_0\otimes x^{i+1}-\sum_{i=0}^{g-1} \left(x\phi^*_{yx^{-i-1}}\right)_0\otimes x^i\right)(1\otimes\nu)\\
    =&(1\otimes x-x\otimes1)\left(\sum_{i=0}^{g-1}\left(\phi^*_{yx^{-i-1}}\right)_0\otimes x^i\right)(1\otimes\nu)\\
    +&\left(y\otimes 1+1\otimes \left(y-Q(x)\right)\right)(1\otimes\nu)
    \end{align*}
    \begin{align*}
    f_\phi=&\frac{(x\otimes1-1\otimes x)\left(\zeta_\phi\cdot\omega^{(1)}_\phi\right)}{(x\otimes1-1\otimes x)(\zeta_\phi\cdot\omega_\phi)}\\
    =&\frac{(x\otimes1-1\otimes x)\left(-\sum_{i=0}^g \left(\phi^*_{yx^{-i-1}}\right)_1\otimes x^i\right)}{y\otimes 1+1\otimes (y-Q(x))-(x\otimes1-1\otimes x)\left(\sum_{i=0}^{g-1}\left(\phi^*_{yx^{-i-1}}\right)_0\otimes x^i\right)}.\tag*{\qedhere}
\end{align*}
\end{proof}

\subsection{Comparison with known results in the case of elliptic curves}

The computations can be directly compared to the results of Green and Papanikolas, who tackled the particular case of an elliptic curve in \cite{GP18}. They assumed $\phi$ to be normalized and the period lattice $\Lambda_\phi$ to be isomorphic to $A$, and they set:
\begin{align*}
    \phi_x=x+x_1\tau+\tau^2, && \phi_y=y+y_1\tau+y_2\tau^2+\tau^3.
\end{align*}
They proved the following identities (see \cite[Thm. 7.1, Eqs. 18,26,27]{GP18}):
\[f_\phi=\frac{1\otimes y-y\otimes 1-((y_2-x_1^q)\otimes1)(1\otimes x-x\otimes 1)}{1\otimes x-x^q\otimes1+(y_1-x_1(y_2-x_1^q))\otimes1};\]
\[\zeta_\phi\cdot\omega_\phi=\frac{(x^q-y_1+x_1(y_2-x_1^q))^q\otimes1-1\otimes x}{f_\phi}.\]
Let's compare these results with Theorem \ref{prop hyper}. First, we need to compute the coefficients $(\phi^*_{yx^{-1}})_0,(\phi^*_{yx^{-1}})_1,(\phi^*_{yx^{-2}})_1$. Starting from the definition of $\phi^*_x$ and $\phi^*_y$ we can explicitly compute the first $3$ terms of $\phi^*_{yx^{-1}}$ using the identity $\phi^*_x\phi^*_{yx^{-1}}=\phi^*_y$:
\begin{align*}
    \phi^*_x&=\tau^{-2}+x_1^{q^{-1}}\tau^{-1}+x\\
    \phi^*_y&=\tau^{-3}+y_2^{q^{-2}}\tau^{-2}+y_1^{q^{-1}}\tau^{-1}+y\\
    \phi^*_{yx^{-1}}&=\tau^{-1}+(y_2-x_1^q)+(y_1^q-x_1^qy_2^q-x^{q^2}+x_1^{q^2+q})\tau+\dots,
\end{align*}
hence $(\phi^*_{yx^{-1}})_0=y_2-x_1^q$ and $(\phi^*_{yx^{-1}})_1=y_1^q-x_1^qy_2^q-x^{q^2}+x_1^{q^2+q}$.
Since $\deg(yx^{-2})=-1$, and since $\phi$ is normalized, we have $\phi^*_{yx^{-2}}\in\tau+\C[\![\tau]\!]\tau^2$, hence $(\phi^*_{yx^{-2}})_1=1$. By Theorem \ref{prop hyper}, we have:
\[(\zeta_\phi\cdot\omega_\phi)f_\phi=(\zeta_\phi\cdot\omega^{(1)}_\phi)=-\sum_{i=0}^g \left(\phi^*_{yx^{-i-1}}\right)_1\otimes x^i=-((y_1^q-x_1^qy_2^q-x^{q^2}+x_1^{q^2+q})\otimes 1+1\otimes x),\]
which agrees with Green and Papanikolas' formula for $\zeta_\phi\cdot\omega_\phi$.

\begin{oss}
    A posteriori, since the computations do not take into account the $A$\=/module structure of $\Lambda_\phi$, it turns out that the formulas found by Green and Papanikolas hold without the assumption $\Lambda_\phi\cong A$.
\end{oss}

\subsection{Link with the Hartl--Juschka pairing for Drinfeld modules in genus \texorpdfstring{$0$}{0}}

In the article \cite{HJ20}, given an abelian and $A$\=/finite Anderson $A$\=/module $\underline{E}=(E,\phi)$, Hartl and Juschka define a perfect pairing of $A_\C$\=/modules between $N(\underline{E})$ and $\tau M(\underline{E})$, where $M(\underline{E})$ and $N(\underline{E})$ are respectively the $A$\=/motive and the dual $A$\=/motive associated to $\underline{E}$.

In this section we prove that Hartl and Juschka's perfect pairing coincides with the dot product defined in Lemma \ref{ev} when $\underline{E}$ is an $\F_q[\theta]$\=/Drinfeld module.

Let's first give the definition of $A$\=/motive and dual $A$\=/motive in the special case of a Drinfeld module. Let's denote by $A_\C[\tau]$ the noncommutative ring $\C[\tau]\otimes A$, and similarly for $A_\C[\tau^{-1}]$.

\begin{Def}\label{def (dual) motive}
    Let $\phi:A\to\C[\tau]$ be a Drinfeld module. 
    
    We define the $A$\=/motive $M(\phi)$ as the left $A_\C[\tau]$\=/module $\C[\tau]$ where for all $a\in A$, $h\in\C[\tau]$, $m\in M(\phi)$ we have $h\cdot m\coloneqq hm$ and $a\cdot m\coloneqq m\phi_a$.

    We define the dual $A$\=/motive $N(\phi)$ as the left $A_\C[\tau^{-1}]$\=/module $\C[\tau^{-1}]$ where for all $a\in A$, $h\in\C[\tau^{-1}]$, $m\in N(\phi)$ we have $h\cdot m\coloneqq hm$ and $a\cdot m\coloneqq m\phi^*_a$.  
\end{Def}

In the following proposition, we consider $\C\hat\otimes\Lambda_\phi$ as a left $\C\otimes A[\tau^{-1}]$\=/module, where $\tau^{-1}$ sends $x\in\C\hat\otimes\Lambda_\phi$ to $x^{(-1)}$, and we consider $\C\hat\otimes\Hom_A(\Lambda_\phi,\Omega)$ as a left $\C\otimes A[\tau]$\=/module where $\tau$ sends $x\in\C\hat\otimes\Hom_A(\Lambda_\phi,\Omega)$ to $x^{(1)}$.

\begin{prop}\label{prop motive embedding}
    The $\C[\tau^{-1}]$\=/linear morphism \[F:N(\phi)\cong\C[\tau^{-1}]\to\C\hat\otimes\Lambda_\phi\] sending $1$ to $\zeta_\phi$ is an injective morphism of $\C\otimes A[\tau^{-1}]$\=/modules.

    The $\C[\tau]$\=/linear morphism \[G:\tau M(\phi)\cong\tau\C[\tau]\to\C\hat\otimes\Hom_A(\Lambda_\phi,\Omega)\] sending $\tau$ to $\omega_\phi^{(1)}$ is an injective morphism of $\C\otimes A[\tau]$\=/modules.
\end{prop}
\begin{proof}
    Let's prove the $A$\=/linearity of $F$. Let $x:=\sum_i c_i\tau^{-i}\in N(\phi)$, so that its image is $F(x)=\sum_i (c_i\otimes 1)\zeta_\phi^{(-i)}$; for all $a\in A$, since $\zeta_\phi$ is a dual Anderson eigenvector we have:
    \[F(x\circ\phi_a^*)=(x\circ\phi_a^*)(F(1))=x(\phi_a^*(\zeta_\phi))=x(\zeta_\phi\cdot(1\otimes a))=(1\otimes a)\cdot x(\zeta_\phi)=(1\otimes a)F(x).\]
    To prove injectivity, let's fix $\sum c_i\tau^{-i}\in N(\phi)\setminus\{0\}$ and prove that $\sum c_i\zeta_\phi^{(-i)}$ is a nonzero element of $\C\hat\otimes\Lambda_\phi$. Let $N$ be the smallest index such that $c_N\neq0$. On one hand, for all $i>N$, \[\zeta_\phi^{(-i)}\cdot\omega_\phi^{(-N)}=\left(\zeta_\phi\cdot\omega_\phi^{(i-N)}\right)^{(-i)}\in\C\otimes\Omega\] by Theorem \ref{Teo rationality}; on the other hand, by Proposition \ref{cor 0 forms}, $\zeta_\phi^{(-N)}\cdot\omega_\phi^{(-N)}$ sends any $c\in\K$ with norm less than $1$ to $c^\frac{1}{q^n}$, as a function from ${\K}/{A}$ to $\C$. Since any form in $\C\otimes\Omega\subseteq\C\hat\otimes\Omega\cong\Hom_{\F_q}^\mathrm{cont}\left({\K}/{A},\C\right)$ has finite support, we deduce that $\zeta_\phi^{(-N)}\cdot\omega_\phi^{(-N)}\not\in\C\otimes\Omega$, hence \[\left(\sum_{i\geq0} c_i\zeta_\phi^{(-i)}\right)\cdot\omega_\phi^{(-N)}=c_N\left(\zeta_\phi^{(-N)}\cdot\omega_\phi^{(-N)}\right)+\sum_{i>N} c_i\left(\zeta_\phi^{(-i)}\cdot\omega_\phi^{(-N)}\right)\not\in\C\otimes\Omega;\]
    in particular, $\sum c_i\zeta_\phi^{(-i)}\neq0$.

    Let's prove the $A$\=/linearity of $G$. Let $y\in\tau M(\phi)$ and fix $x:=\sum_i c_i\tau^i\in\C[\tau]$ so that $x\tau=y$ and $G(y)=x(G(\tau))=\sum_i (c_i\otimes 1)\omega_\phi^{(i+1)}$; for all $a\in A$, since $\omega_\phi$ is an Anderson eigenvector we have:
    \[G(y\phi_a)=(x\tau\phi_a\tau^{-1})G(\tau)=(x\tau)(\phi_a(\omega_\phi))=(1\otimes a)x(\omega_\phi^{(1)})=(1\otimes a)G(y).\]
    To prove injectivity, let's fix $\sum_i c_i\tau^i\in\tau M(\phi)\setminus\{0\}$ and prove that $\sum c_i\omega_\phi^{(i)}$ is a nonzero element of $\C\hat\otimes\Hom_A(\Lambda_\phi,\Omega)$. Let $N$ be the smallest index such that $c_N\neq0$: as shown above, for all $i>N$, $\zeta_\phi^{(N)}\cdot\omega_\phi^{(i)}\in\C\otimes\Omega$, while $\zeta_\phi^{(N)}\cdot\omega_\phi^{(N)}\not\in\C\otimes\Omega$, so $\sum c_i\omega^{(i)}\neq0$.
\end{proof}

Let's state a version of Hartl and Juschka's theorem for Drinfeld modules.

\begin{teo}[{\cite[Thm. 5.13]{HJ20}}]
Let $\phi:A\to\C[\tau]$ be a Drinfeld module. There is a canonical $A_\C$\=/linear perfect pairing $HJ:N(\phi)\otimes_{A_\C}\tau M(\phi)\to\Omega_\C$.
\end{teo}

Hartl and Juschka leave as an open question the computation of the pairing $HJ$ in the general case, but they carry it out in the case $A=\F_q[\theta]$. In particular, they show the following.

\begin{prop}[{\cite[Ex. 5.16]{HJ20}}]\label{prop Hartl computation}
    Assume $A=\F_q[\theta]$ and let $\phi:A\to\C[\tau]$ be a Drinfeld module of rank $r$, with $\phi_\theta=\sum_i t_i\tau^i$. Let $\{\alpha_{i,j}\}_{0\leq i,j<r}\in \mathbb{C}_\infty^{r\times r}$ be the matrix with entries $\alpha_{i,j}:=-t_{i+j+1}^{q^{-i}}$, and let $\{\beta_{i,j}\}_{0\leq i,j<r}\in \mathbb{C}_\infty^{r\times r}$ be its inverse. Then for all $0\leq i,j<r$, the following identity holds: 
    \[HJ(\tau^{-j}\otimes\tau^{i+1})=\beta_{i,j}d\theta.\]
\end{prop}

We prove the following Theorem.

\begin{teo}\label{teo restricted pairing}
    Assume $A=\F_q[\theta]$ and let $\phi:A\to\C[\tau]$ be a Drinfeld module. The following identity holds in $\Omega_\C$ for all $i,j\geq0$:
    \[HJ(\tau^{-j}\otimes\tau^{i+1})=\zeta_\phi^{(-j)}\cdot\omega_\phi^{(i+1)}.\] 
\end{teo}
\begin{proof}
    By Proposition \ref{prop motive embedding}, we can identify $\tau M(\underline{E})=\Span_{\C}\{\omega_\phi^{(i+1)}\}_{i\geq0}$ and $N(\underline{E})=\Span_{\C}\{\zeta_\phi^{(-j)}\}_{j\geq0}$, hence we need to prove that the dot product coincides with the Hartl--Juschka pairing.

    If we call $r$ the rank of $\phi$, the set $\{\tau^{i+1}\}_{0\leq i<r}$ generates $\tau M(\underline{E})$ as an $A_\C$\=/module, and the set $\{\tau^{(-j)}\}_{0\leq j<r}$ generates $N(\underline{E})$ as an $A_\C$\=/module; since both the Hartl--Juschka pairing and the dot product are $A_\C$\=/linear, it suffices to prove the statement for all $0\leq i,j< r$. 
    
    By Proposition \ref{prop Hartl computation}, we need the following identity to hold in $\Omega_\C\subseteq\C\hat\otimes\Omega$ for all $0\leq i,j< r$:
    \[\sum_{i=0}^{r-1} (t_{k+i+1}\otimes1)^{q^{-k}}\left(\zeta_\phi^{(-j)}\cdot\omega_\phi^{(i+1)}\right)=-\delta_{k,j}d\theta.\] 
    
    If $k>j$, we have:
    \begin{align*}
        \sum_{i=0}^{r-1}(t_{k+i+1}\otimes1)^{q^{-k}}\left(\zeta_\phi^{(-j)}\cdot\omega_\phi^{(i+1)}\right)=&\sum_{i=r-k}^{r-1}(t_{k+i+1}\otimes1)^{q^{-k}}\left(\zeta_\phi^{(-j)}\cdot\omega_\phi^{(i+1)}\right)\\
        +&\sum_{i=0}^{r-1-k}(t_{k+i+1}\otimes1)^{q^{-k}}\left(\zeta_\phi^{(-j)}\cdot\omega_\phi^{(i+1)}\right)=0,
    \end{align*}
    where the first sum is $0$ because $t_l=0$ if $l>r$, and the second sum is $0$ because, by Proposition \ref{g=0 i=0}, \[\zeta_\phi^{(-j)}\cdot\omega_\phi^{(i+1)}=\left(\zeta_\phi\cdot\omega_\phi^{(i+j+1)}\right)^{(-j)}=0\text{ if }0<i+j+1<r,\]
    which is true because $i,j\geq0$ and $i+1\leq r-k<r-j$.

    Since $\omega_\phi$ is an Anderson eigenvector, the identity $\sum_l (t_l\otimes1)\omega_\phi^{(l)}=\omega_\phi(1\otimes\theta)$ holds, hence if $k\leq j$ we have:
    \begin{align*}
        &\left(\sum_{i=0}^{r-1}(t_{k+i+1}\otimes1)^{q^{-k}}\left(\zeta_\phi^{(-j)}\cdot\omega_\phi^{(i+1)}\right)\right)^{(k)}\\
        =&\zeta_\phi^{(k-j)}\cdot\sum_{i=k+1}^{r+k}(t_i\otimes1)\omega_\phi^{(i)}=\zeta_\phi^{(k-j)}\cdot\sum_{i=k+1}^{r}(t_i\otimes1)\omega_\phi^{(i)}\\
        =&(1\otimes\theta-\theta\otimes1)\zeta_\phi^{(k-j)}\cdot\omega_\phi-\zeta_\phi^{(k-j)}\cdot\sum_{i=1}^k (t_i\otimes1)\omega_\phi^{(i)}\\
        =&(1\otimes\theta-\theta\otimes1)\left(\zeta_\phi\cdot\omega_\phi^{(j-k)}\right)^{(k-j)}-\sum_{i=1}^k (t_i\otimes1)\left(\zeta_\phi\cdot\omega_\phi^{(i+j-k)}\right)^{(k-j)}.
    \end{align*}
    By Proposition \ref{g=0 i=0}, since $0<i+j-k\leq j<r$, the sum on the right hand side is $0$, while $(1\otimes\theta-\theta\otimes1)\left(\zeta_\phi\cdot\omega_\phi^{(j-k)}\right)^{(k-j)}$ is $0$ if $k<j$ and $-d\theta$ if $k=j$.
\end{proof}

\noindent{\footnotesize \textbf{Data availability} Data sharing not applicable to this article as no datasets were generated or analysed during the current study.}

\noindent{\footnotesize \textbf{Conflict of interest} The corresponding author states that there is no conflict of
interest.}
\bibliography{main}
\end{document}